\newtheorem{definition}{Definition}[section]
\newtheorem{theorem}[definition]{Theorem}
\newtheorem{lemma}[definition]{Lemma}
\newtheorem{proposition}[definition]{Proposition}
\theoremstyle{definition}
\newtheorem{remark}[definition]{Remark}
\newtheorem{note}[definition]{Note}
\newtheorem{example}[definition]{Example}
\newcommand{\diX}{\int^\oplus_{X}}
\newcommand{\dilX}{\int^{\oplus_\text{loc}}_{X}}
\newcommand{\dmu}{\mathrm{d} \mu (p)}
\newcommand{\la}{\left\langle}
\newcommand{\ra}{\right\rangle}
\newcommand\up[1]{\mbox{\raisebox{1pt}{\ensuremath{#1}}}} 
\title[Direct integral and decompositions of locally Hilbert spaces]{Direct integral and decompositions of locally Hilbert spaces}
\author[Kulkarni]{Chaitanya J. Kulkarni}
\address{Chaitanya J. Kulkarni, Indian Institute of Science  Education and Research (IISER) Mohali, Knowledge City, S.A.S Nagar, Punjab 140306, India.}
\email{chaitanyakulkarni58@gmail.com}
\author[Pamula]{Santhosh Kumar Pamula}
\address{Santhosh Kumar Pamula, Indian Institute of Science  Education and Research (IISER) Mohali, Knowledge City, S.A.S Nagar, Punjab 140306, India.}
\email{santhoshkp@iisermohali.ac.in}
\subjclass[2020]{46A03; 46L45; 47L40}
\keywords{ Direct Integrals,  Inductive limit, Locally Hilbert space, Locally von Neumann algebra, Projective limit.}
\date{\today}
\begin{document}

\maketitle

\begin{abstract}
   In this work, we introduce the concept of direct integral of locally Hilbert spaces by using the notion of locally standard measure space (analogous to standard measure space defined in the classical setup), which we obtain by considering a strictly inductive system of measurable spaces along with a projective system of finite measures. Next, we define a locally Hilbert space given by the direct integral of a family of locally Hilbert spaces. Following that we introduce decomposable locally bounded and diagonalizable  locally bounded operators. Further, we show that the class of diagonalizable locally bounded operators is an abelian locally von Neumann algebra, and this can be seen as the commutant of decomposable locally bounded operators. Finally, we discuss the following converse question: 
   
   For a locally Hilbert space $\mathcal{D}$ and an abelian locally von Neumann algebra $\mathcal{M}$, does there exist a locally standard measure space and a family of locally Hilbert spaces such that
   \begin{enumerate}
       \item the locally Hilbert space $\mathcal{D}$ is identified with the direct integral of family of locally Hilbert spaces;
       \item the abelian locally von Neumann algebra $\mathcal{M}$ is identified with the abelian locally von Neumann algebra of all diagonalizable locally bounded operators?
   \end{enumerate} 
   We answer this question affirmatively for a certain class of abelian locally von Neumann algebras.

\end{abstract}


\tableofcontents
\section{Introduction and Preliminaries} \label{sec;Introduction and Preliminaries}

The concept of the direct sum of Hilbert spaces can be generalized to the notion of the direct integral of Hilbert spaces. In the context of direct integrals, the discrete index set used in direct sum is replaced by a suitable measure space known as a standard measure space (see Note \ref{note; standard measure space}). The notion of the direct integral of Hilbert spaces is associated with an abelian von Neumann algebra, referred to as the algebra of diagonalizable operators. Conversely, given a separable Hilbert space $\mathcal{H}$ and an abelian von Neumann algebra in $\mathcal{B}(\mathcal{H})$, there exists a family of Hilbert spaces and a standard measure space such that the given Hilbert space can be identified with the direct integral of Hilbert spaces, whereas the abelian von Neumann algebra can be identified with the algebra of diagonalizable operators (see Theorem \ref{thm;dihs}). This process is referred to as the ``disintegration" of the Hilbert space. The concept of disintegration of Hilbert spaces (see \cite{DJ, Wils1, Wils2})  is crucial in the decomposition of a representation of a separable $C^\ast$-algebra into irreducible representations, or in the disintegration of a von Neumann algebra into factors (see \cite{OB1, DixC, DixV, KR2, Tak1} for further details on this topic). We strongly recommend articles \cite{Arveson} and \cite{Arveson2} for useful insight related to the theory of $C^\ast$-algebras. W. 
 B. Arveson proved in \cite{Arveson3} a long--standing open problem of whether all opertaor systems have sufficianetly many boundary representations. In particular, the author shows that the answer is affirmative if the operator system is separable. The proofs of several results in \cite{Arveson3} are based on the theory of disintegration of Hilbert spaces. 
 
In this article, we extend the concept of direct integral and disintegration to the context of locally Hilbert spaces in order to study non-commutative Choquet boundary problems in case of local operator systems. We begin by proposing a definition of the direct integral of locally Hilbert spaces, which involves the introduction of a generalization of the standard measure space. We call it as a locally standard measure space  (see Definition \ref{def; lsms}). In this framework, we also introduce the classes of decomposable locally bounded operators and diagonalizable locally bounded operators. We show that these classes individually form a locally von Neumann algebra. Moreover, the locally von Neumann algebra of all diagonalizable locally bounded operators is abelian. In the general case, it can be observed that the locally von Neumann algebra of all decomposable locally bounded operators is contained in the commutant of the locally von Neumann algebra of all diagonalizable locally bounded operators. However, from Part (a) of Theorem \ref{thm;DEC and DIAG LvNA} and Theorem \ref{thm; relation between M DEC and M DIAG}, both of these locally von Neumann algebras can be viewed as the projective limit of the same projective system of von Neumann algebras (also see Remark \ref{rem; inverse limits of decomposable vNA}). The set equality between these two locally von Neumann algebras is not completely known yet. We establish the set equality in Theorem \ref{thm; M DEC = M DIAG Commutant} for certain cases. Towards the end of the article, we address the question of disintegratiion of a locally Hilbert space. That is, given a locally Hilbert space $\mathcal{D}$ and an abelian locally von Neumann algebra $\mathcal{M}$ contained in the locally $C^\ast$-algebra of all locally bounded operators on $\mathcal{D}$, does there exist a locally standard measure space and a family of locally Hilbert spaces such that the locally Hilbert space $\mathcal{D}$ is identified with the direct integral of the family of locally Hilbert spaces, and the abelian locally von Neumann algebra $\mathcal{M}$ is identified with the abelian locally von Neumann algebra of diagonalizable locally bounded operators? (see Theorem \ref{thm;dlhs}). Throughout this article, we employ the concepts of inductive limits and projective limits of locally convex spaces along with the framework of direct integrals. For this, we use the results presented in \cite{BGP, AD, AG, MJ3} , as well as the results on the theory of locally von Neumann algebras discussed in \cite{MF, MJ1, MJ2}.

This article is organized into three sections. In Section  \ref{sec;Introduction and Preliminaries}, we review key definitions and results from the theory of direct integrals of Hilbert spaces, as well as from the theory of locally $C^\ast$-algebra and locally von Neumann algebra. In Section \ref{sec; Direct integrals}, we begin by introducing the notion of a locally standard measure space. We then present the definition of the direct integral of locally Hilbert spaces and prove that the direct integral of locally Hilbert spaces is again a locally Hilbert space. We also provide some examples of this concept. Subsequently, we introduce the notions of decomposable locally bounded operators and diagonalizable locally bounded operators, showing that each individually forms a locally von Neumann algebra. Further, in the later part of this section we explore the relation between the locally von Neumann algebra of decomposable locally bounded operators and diagonalizable locally bounded operators. In Section \ref{sec; Disintegration}, we examine the converse question of disintegrating a locally Hilbert space. To address this, we consider a particular class of abelian locally von Neumann algebras and with respect to such an algebra, we disintegrate a given locally Hilbert space. The details of this are presented in Theorem \ref{thm;dlhs}. 

\subsection{Direct integral of Hilbert spaces}
We recall a few definitions and results from the theory of direct integral of Hilbert spaces. The reader is directed to \cite{OB1, DixC, DixV, KR2, Tak1} for a comprehensive reading of this topic. Throughout this article, for terminology and notations related to the theory of direct integral of Hilbert spaces, we refer to Chapter 14 of \cite{KR2}. 

\begin{definition}\cite[Definition 14.1.1]{KR2} \label{def;dihs}
If $X$ is a $\sigma$-compact locally compact (Borel measure) space, $\mu$ is the completion of a Borel measure on $X$, and $\{ \mathcal{H}_p \}_{p \in X}$ is a family of separable Hilbert spaces indexed by points $p$ in $X$, we say that a separable Hilbert space $\mathcal{H}$ is the direct integral of $\{ \mathcal{H}_p \}$ over $(X, \mu)$ \Big(we write: $\mathcal{H} = \diX \mathcal{H}_p \, \dmu$ \Big) when, to each $x \in \mathcal{H}$, there corresponds a function $ p \mapsto x(p)$ on $X$ such that $x(p) \in \mathcal{H}_p$ for each $p$ and 
\begin{enumerate}
\item $p \mapsto \la x(p), y(p) \ra$ is $\mu$-integrable, when $x, y \in \mathcal{H}$ and
\begin{align*}
\la x, y \ra = \int_X \la x(p), y(p) \ra \, \dmu
\end{align*}
\item if $x_p \in \mathcal{H}_p$ for all $p$ in $X$ and $p \mapsto \la x_p, y(p) \ra$ is integrable for each $y \in \mathcal{H}$, then there is a $x \in \mathcal{H}$ such that 
$x(p) = x_p$ for almost every $p$. We say that $\diX \mathcal{H}_p \, \dmu$ and $p \mapsto x(p)$ are the (direct integral) decompositions of $\mathcal{H}$ and $x$ respectively.
\end{enumerate}
\end{definition}

\begin{note} \label{note; standard measure space}
    In view of Theorem \ref{thm;dihs}, our discussion gives a special attention to standard measure space $(X, \mu)$. That is, $X$ is a complete, separable, metric space and $\mu$ is a finite, positive measure on $X$.
\end{note} 

Next, we recall the definition of a decomposable and diagonalizable bounded operators on the direct integral of Hilbert spaces. 
\begin{definition}\cite[Definition 14.1.6]{KR2} \label{def;Debo}
Let $\mathcal{H}$ be the direct integral of Hilbert spaces $\{ \mathcal{H}_p \}$ over the standard measure space $(X, \mu)$. Then an operator $T$ in $\mathcal{B}(\mathcal{H})$ is said to be:
\begin{enumerate}
    \item \label{def;Decbo} {\it decomposable}, if there is a family  $\{ T_p \in \mathcal{B}(\mathcal{H}_p) \}_{p \in X}$ such that for each $x \in \mathcal{H}$, we have 
\begin{align*}
Tx(p) = T_p x(p)
\end{align*}
for almost every $p \in X$. Subsequently, $T$ is denoted by $\int^\oplus_X T_p \, \dmu$. Moreover, the norm of $\| T \|$ is defined by
\begin{equation} \label{eq;norm of T}
\| T \| := \text{ess sup} \big \{ \| T_p \| \; : \; p \in X \big \}.
\end{equation}
\item \label{def;Diagbo}{\it diagonalizable}, if $T$ is decomposable and there exists a function $f \in \text{L}^\infty(X, \mu)$ such that for each $x \in \mathcal{H}$, we have 
\begin{align*}
Tx(p) = f(p) x(p)
\end{align*}
for almost every $p \in X$.
\end{enumerate}  
\end{definition}

The following theorem provides a structure on the set of all decomposable and the set of all diagonalizable operators, and describes the relationship between them. 

\begin{theorem}\cite[Theorem 14.1.10]{KR2} \label{thm;DeDibo vNA}
Let $\mathcal{H} = \diX \mathcal{H}_p \, \dmu$. Then the set of all decomposable operators is a von Neumann algebra with the abelian commutant coinciding with the set of all diagonalizable operators.  
\end{theorem}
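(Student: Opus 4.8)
The plan is to reduce the entire statement to the single commutation identity $\mathfrak{A}' = \mathfrak{D}$, where $\mathfrak{D}$ denotes the set of all decomposable operators and $\mathfrak{A}$ the set of all diagonalizable operators on $\mathcal{H} = \diX \mathcal{H}_p\,\dmu$; everything else then follows by elementary commutant bookkeeping. First I would set up the diagonalizable algebra. For $f \in \mathrm{L}^\infty(X,\mu)$ let $M_f$ act by $(M_f x)(p) = f(p)x(p)$; using Definition~\ref{def;dihs}(2) together with the boundedness of $f$, one checks that $M_f \in \mathcal{B}(\mathcal{H})$ with $\|M_f\| = \|f\|_\infty$, that $f \mapsto M_f$ is an injective (modulo null sets) unital $*$-homomorphism, and hence that $\mathfrak{A} = \{M_f : f \in \mathrm{L}^\infty(X,\mu)\}$ is an abelian $*$-subalgebra of $\mathcal{B}(\mathcal{H})$.

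Next I would record the easy structural facts. Since taking adjoints, sums, scalar multiples and products of the fiber operators $T_p$ preserves the essential measurability of $p \mapsto T_p$, the set $\mathfrak{D}$ is a unital $*$-subalgebra of $\mathcal{B}(\mathcal{H})$, with $(\diX T_p\,\dmu)^* = \diX T_p^*\,\dmu$ and the analogous formulas for sums and products; in particular every $M_f$ is decomposable (take $T_p = f(p)I_{\mathcal{H}_p}$), so $\mathfrak{A} \subseteq \mathfrak{D}$. A fiberwise computation $(S M_f x)(p) = T_p(f(p)x(p)) = f(p)T_p x(p) = (M_f S x)(p)$ for $S = \diX T_p\,\dmu$ shows that decomposable and diagonalizable operators commute, giving $\mathfrak{D} \subseteq \mathfrak{A}'$.

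The heart of the proof, and the main obstacle, is the reverse inclusion $\mathfrak{A}' \subseteq \mathfrak{D}$. Given $T$ commuting with every $M_f$ — equivalently with every multiplication by a characteristic function $\chi_E$, which encodes that $T$ acts ``locally'' over $X$ — I would fix a fundamental sequence of measurable vector fields $\{e_n\}$ whose values $\{e_n(p)\}$ are dense in $\mathcal{H}_p$ for almost every $p$, and attempt to define the fiber operators by $T_p e_n(p) := (Te_n)(p)$. The relation $T M_{\chi_E} = M_{\chi_E} T$ is precisely what forces this prescription to respect $\mathrm{L}^\infty$-linear combinations $\sum f_k(p) e_{n_k}(p)$ consistently almost everywhere, so that $T_p$ is well defined on a dense subset of $\mathcal{H}_p$. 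I would then establish the a.e. bound $\|T_p\| \le \|T\|$ (so that each $T_p$ extends to an element of $\mathcal{B}(\mathcal{H}_p)$), the essential measurability of $p \mapsto T_p$, and finally $(Tx)(p) = T_p x(p)$ a.e. for all $x \in \mathcal{H}$ via Definition~\ref{def;dihs}(2). This measurable-selection construction — proving well-definedness, the uniform fiber bound, and joint measurability of the field $\{T_p\}$ — is where essentially all the technical effort lies. Combined with the previous paragraph it yields $\mathfrak{A}' = \mathfrak{D}$, and since the commutant of a self-adjoint family is always a von Neumann algebra, $\mathfrak{D} = \mathfrak{A}'$ is a von Neumann algebra.

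It remains to identify $\mathfrak{D}'$ with $\mathfrak{A}$. From $\mathfrak{A} \subseteq \mathfrak{D}$ I get $\mathfrak{D}' \subseteq \mathfrak{A}' = \mathfrak{D}$, so $\mathfrak{D}' = \mathfrak{D} \cap \mathfrak{D}'$ is exactly the center of $\mathfrak{D}$; and $\mathfrak{D} \subseteq \mathfrak{A}'$ gives the reverse containment $\mathfrak{A} \subseteq \mathfrak{D}'$. To pin down the center I would run a second, easier fiberwise argument: a central decomposable operator $\diX T_p\,\dmu$ must commute with a countable generating family of bounded measurable fields, which forces $T_p$ into the center $Z(\mathcal{B}(\mathcal{H}_p)) = \mathbb{C}\, I_{\mathcal{H}_p}$ for almost every $p$, so $T_p = f(p)I_{\mathcal{H}_p}$ with $f$ measurable and $\|f\|_\infty = \|\diX T_p\,\dmu\| < \infty$; hence the operator is diagonalizable. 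This gives $\mathfrak{D}' = \mathfrak{A}$, and $\mathfrak{A}$ is abelian by the first paragraph, which is exactly the assertion that the decomposable operators form a von Neumann algebra whose (abelian) commutant coincides with the diagonalizable operators.
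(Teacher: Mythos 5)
The paper does not prove this statement at all: it is quoted verbatim as \cite[Theorem 14.1.10]{KR2} and used as a black box (e.g.\ in the proofs of Theorem \ref{thm;DEC and DIAG LvNA} and Theorem \ref{thm; relation between M DEC and M DIAG}), so there is no in-paper argument to compare yours against. Measured against the classical source, your architecture is exactly the standard one: reduce everything to the single identity $\mathfrak{A}'=\mathfrak{D}$, get $\mathfrak{A}\subseteq\mathfrak{D}\subseteq\mathfrak{A}'$ by fiberwise algebra, conclude that $\mathfrak{D}$ is a von Neumann algebra because it is the commutant of a self-adjoint family, and then identify $\mathfrak{D}'$ with the center of $\mathfrak{D}$ and hence with $\mathfrak{A}$. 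That bookkeeping is all correct.

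The caveat is that the two places where the theorem actually lives are only described, not proved. First, the inclusion $\mathfrak{A}'\subseteq\mathfrak{D}$: defining $T_pe_n(p):=(Te_n)(p)$ on a fundamental sequence, proving well-definedness from $TM_{\chi_E}=M_{\chi_E}T$, establishing the a.e.\ bound $\|T_p\|\le\|T\|$ (which needs a localization argument on sets where the norm of the densely defined $T_p$ would exceed $\|T\|+\varepsilon$), and verifying measurability of $p\mapsto T_p$ together with $(Tx)(p)=T_px(p)$ for all $x$ is several pages of work in \cite{KR2}, and your sketch defers all of it. Second, the claim that a central element of $\mathfrak{D}$ has $T_p\in\mathbb{C}\cdot I_{\mathcal{H}_p}$ a.e.\ needs a measurable field of matrix units, which in turn requires partitioning $X$ into the measurable sets where $\dim\mathcal{H}_p=n$; you gesture at a ``countable generating family'' but do not construct it. Two smaller imprecisions: $\|M_f\|=\|f\|_\infty$ and the injectivity of $f\mapsto M_f$ both require $\mathcal{H}_p\neq\{0\}$ for almost every $p$ (otherwise one must take the essential supremum over the support of the field), and the norm identity $\|\diX T_p\,\dmu\|=\operatorname{ess\,sup}_p\|T_p\|$ that you invoke at the end is itself a nontrivial lemma. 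None of this invalidates the plan, but as written the proposal is an outline of the Kadison--Ringrose proof rather than a proof.
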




The disintegration (or decomposition) of a given Hilbert space with respect to an abelian von Neumann algebra is given below. 

\begin{theorem}\cite[Part II, Chapter 6, Theorem 2]{DixV} \label{thm;dihs}
Let $\mathcal{H}$ be a separable Hilbert space, and $\mathcal{M}$ be an abelian von Neumann algebra in $\mathcal{B}(\mathcal{H})$. Then there exists a standard measure space $(X, \mu)$, a family of separable Hilbert spaces $\{ \mathcal{H}_p \}_{p \in X}$, and an isomorphism of $\mathcal{H}$ onto $\diX \mathcal{H}_p \, \dmu$ which transforms $\mathcal{M}$ into the algebra of diagonalizable operators.
\end{theorem}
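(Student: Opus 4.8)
The plan is to reduce the statement to the spectral model of the abelian von Neumann algebra and then assemble the fibres through a cyclic (multiplicity) decomposition. First I would produce the base space. Since $\mathcal{H}$ is separable, the unit ball of $\mathcal{M}$ is metrizable and separable in the strong operator topology, so I can choose a countable set generating a unital separable $C^\ast$-subalgebra $\mathfrak{A} \subseteq \mathcal{M}$ that is strongly dense in $\mathcal{M}$. By Gelfand theory $\mathfrak{A} \cong C(X)$, where $X$ is the spectrum of $\mathfrak{A}$; being compact and metrizable, $X$ is a complete separable metric space, hence admissible as the base of a standard measure space in the sense of Note \ref{note; standard measure space}.

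Next I would fix the finite measure $\mu$ on $X$ and realize $\mathcal{M}$ as $\mathrm{L}^\infty(X,\mu)$. Using separability, I pick a sequence $\{\xi_n\}$ of vectors whose cyclic subspaces $\mathcal{K}_n = \overline{\mathcal{M}\xi_n}$ are mutually orthogonal and satisfy $\mathcal{H} = \bigoplus_n \mathcal{K}_n$ (this rests on a maximal family of orthogonal $\mathcal{M}$-cyclic projections, which is countable because $\mathcal{H}$ is separable). Each vector state $A \mapsto \la A\xi_n, \xi_n\ra$ on $\mathfrak{A} \cong C(X)$ is given, by the Riesz representation theorem, by a finite positive Borel measure $\mu_n$ on $X$, and the map $A\xi_n \mapsto \hat{A} \in \mathrm{L}^2(X,\mu_n)$ extends to a unitary of $\mathcal{K}_n$ onto $\mathrm{L}^2(X,\mu_n)$ intertwining the action of $\mathcal{M}$ with multiplication by elements of $\mathrm{L}^\infty(X,\mu_n)$. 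Setting $\mu = \sum_n 2^{-n}\mu_n$ yields a single finite measure with $\mu_n \ll \mu$ for every $n$; Radon--Nikodym then lets me replace each $\mu_n$ by a density against $\mu$ and identify $\mathrm{L}^2(X,\mu_n)$ with $\mathrm{L}^2$ of $\mu$ restricted to the Borel set $E_n$ where that density is positive.

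With the measure fixed I would build the measurable field of fibres. Arranging the cyclic vectors inductively so that $\mu_{n+1} \ll \mu_n$ (the Hahn--Hellinger refinement of the multiplicity decomposition) makes the supports nest, $E_1 \supseteq E_2 \supseteq \cdots$; I then define $\mathcal{H}_p$ to be a Hilbert space of dimension equal to the cardinality of $\{ n : p \in E_n \}$, with a fixed orthonormal basis indexed by that set. The assignment $p \mapsto \mathcal{H}_p$ is a measurable field, and the separate unitaries $\mathcal{K}_n \cong \mathrm{L}^2(E_n,\mu)$ assemble into an isomorphism of $\mathcal{H} = \bigoplus_n \mathcal{K}_n$ onto $\diX \mathcal{H}_p \, \dmu$ in the sense of Definition \ref{def;dihs}; I would verify conditions (1) and (2) there by testing on the dense set of finitely supported sections coming from the $\mathcal{K}_n$.

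Finally I would check that this isomorphism carries $\mathcal{M}$ precisely onto the diagonalizable operators. An element of $\mathcal{M}$, being a strong limit of elements of $\mathfrak{A} \cong C(X)$, corresponds to a unique $f \in \mathrm{L}^\infty(X,\mu)$, and under the identification its action on the fibre $\mathcal{H}_p$ is scalar multiplication by $f(p)$, i.e. it is the diagonalizable operator associated to $f$ in the sense of Definition \ref{def;Debo}; conversely every such $f$ is attained because $\mathfrak{A}$ is strongly dense in $\mathcal{M}$ and bounded Borel functions are strong limits of continuous ones. Combined with Theorem \ref{thm;DeDibo vNA}, which already identifies the diagonalizable operators as the abelian commutant inside the decomposable algebra, this yields the asserted isomorphism of $\mathcal{H}$ transforming $\mathcal{M}$ into the diagonalizable operators. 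The main obstacle is the measurability bookkeeping in the third step: arranging the cyclic decomposition with nested supports so that $p \mapsto \dim \mathcal{H}_p$ is measurable and the individual unitaries glue into a single direct-integral isomorphism — this is exactly where separability of $\mathcal{H}$ (bounding the multiplicity by a countable index) and the standardness of $(X,\mu)$ are essential.
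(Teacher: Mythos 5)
This statement is quoted from Dixmier \cite[Part II, Chapter 6, Theorem 2]{DixV} and the paper supplies no proof of it, so there is no in-paper argument to compare against; your sketch is, in essence, the classical proof one finds in Dixmier or in Kadison--Ringrose: a separable strongly dense unital $C^\ast$-subalgebra $\mathfrak{A}\cong C(X)$ with $X$ compact metrizable, a countable orthogonal cyclic decomposition $\mathcal{H}=\bigoplus_n\overline{\mathcal{M}\xi_n}$, realization of each summand as $\mathrm{L}^2(X,\mu_n)$ via the Riesz representation of the vector state, a single dominating finite measure $\mu=\sum_n 2^{-n}\mu_n$ with Radon--Nikodym densities supported on Borel sets $E_n$, fibres $\mathcal{H}_p$ of dimension $\#\{n:p\in E_n\}$, and identification of $\mathcal{M}$ with $\mathrm{L}^\infty(X,\mu)$ acting diagonally. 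The outline is correct. Two steps deserve more care than you give them. First, the nesting $E_1\supseteq E_2\supseteq\cdots$ is not automatic from an arbitrary maximal orthogonal family of cyclic vectors; you must either carry out the Hahn--Hellinger ordering (choose $\xi_1$ so that $\mu_1$ dominates every vector measure, then recurse on the orthogonal complement), or simply drop the nesting: the field $p\mapsto\ell^2\big(\{n:p\in E_n\}\big)$ is already measurable once each $E_n$ is Borel, with the coordinate sections as a fundamental sequence, so this refinement is a convenience rather than a necessity. Second, condition (2) of Definition \ref{def;dihs} (the saturation property) is not established by ``testing on finitely supported sections'': given a pointwise family $(x_p)_{p\in X}$ with $p\mapsto\langle x_p,y(p)\rangle$ integrable for every $y$, one must deduce that $p\mapsto\|x_p\|$ is square-integrable, which requires a uniform-boundedness or closed-graph argument on the partial sums over the coordinates. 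Neither point is a gap in substance --- both are standard --- and your closing identification of $\mathcal{M}$ with the diagonalizable algebra, using that multiplication by a bounded Borel function is a strong limit of multiplications by continuous functions and that $\mathcal{M}$ is strongly closed, is the right way to finish.
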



Our aim is to establish suitable notions like direct integrals, decomposable operators, diagonaliazable operators, etc (that are described earlier) in locally Hilbert space setting. Before that, we turn our attention towards basic notations, terminologies and necessary concepts from the theory of locally C*-algebra. The following definitions are mainly drawn from \cite{AG}, and for a detailed discussion on various topics of locally C*-algebra one can see \cite{BGP, AD, AG2, MJ1, MJ2, MJ3}.

\subsection{Locally Hilbert space} 
A locally Hilbert space is the inductive limit of a strictly inductive system (or an upward filtered family) of Hilbert spaces. The formal definition is given below:

\begin{definition}\cite[Subsection 1.3]{AG} \label{def;sis}
Let $( \mathcal{H}_\alpha,\; \la \cdot, \cdot \ra_{\mathcal{H}_\alpha} )_{\alpha \in \Lambda}$ be a net of Hilbert spaces. Then $\mathcal{E} = \{\mathcal{H}_\alpha \}_{\alpha \in \Lambda}$ is said to be a strictly inductive system (or an upward filtered family) of Hilbert spaces if:
\begin{enumerate}
\item $(\Lambda, \leq)$ is a directed partially ordered set;
\item for each $\alpha, \beta \in \Lambda$ with $\alpha \leq \beta \in \Lambda$ we have $\mathcal{H}_\alpha \subseteq \mathcal{H}_\beta$;
\item for each $\alpha, \beta \in \Lambda$ with $\alpha \leq \beta \in \Lambda$ the inclusion map $J_{\beta, \alpha} : \mathcal{H}_\alpha \rightarrow \mathcal{H}_\beta$ is isometric, that is,
$\la u, v \ra_{\mathcal{H}_\alpha} = \la u, v \ra_{\mathcal{H}_\beta}$ for all $u, v \in \mathcal{H}_\alpha$.
\end{enumerate}
\end{definition}

It is evident from equation 1.13 of \cite{AG} that for a strictly inductive system $\mathcal{E} = \{\mathcal{H}_\alpha \}_{\alpha \in \Lambda}$ of Hilbert spaces, the inductive limit denoted by $\varinjlim\limits_{\alpha \in \Lambda} \mathcal{H}_\alpha$, and it is given by
\begin{equation} \label{eq;lhs}
\varinjlim_{\alpha \in \Lambda} \mathcal{H}_\alpha = \bigcup_{\alpha \in \Lambda} \mathcal{H}_\alpha.
\end{equation}

\begin{definition}\cite[Subsection 1.3]{AG} \label{def;lhs}
A locally Hilbert space $\mathcal{D}$, is defined as the inductive limit of some strictly inductive system $\mathcal{E} = \{\mathcal{H}_\alpha \}_{\alpha \in \Lambda}$ of Hilbert spaces. 
\end{definition}

It is worth to point out that in the work of \cite{BGP, AD}, the authors used the term ``quantized domain" in place of locally Hilbert space. In particular, $\mathcal{D}$ is the quantized domain given by the family $\mathcal{E} = \{\mathcal{H}_\alpha \}_{\alpha \in \Lambda}$. Moreover, if $\Lambda = \mathbb{N}$, then $\mathcal{E}$ is countable and $\mathcal{D}$ is called a \textit{quantized Frechet domain}. For details, see Definition 2.3 of \cite{BGP}.


\begin{example} \label{ex;lhs}
Let $\{e_{n}: n \in \mathbb{N}\}$ be a Hilbert basis of $\ell^2(\mathbb{N}).$ For each $k \in \mathbb{N}$, define the closed (in fact, finite dimensional) subspace  $\mathcal{H}_k := \text{span} \{ e_1, e_2,...,e_k \}$. It follows that $\mathcal{H}_{m} \subseteq \mathcal{H}_{n}$ for $m \leq n$. In other words, the family $\mathcal{E} = \{\mathcal{H}_n\}_{n \in \mathbb{N}}$ forms a strictly inductive system of Hilbert spaces. The inductive limit of the strictly inductive system $\mathcal{E} = \{\mathcal{H}_n\}_{n \in \mathbb{N}}$ is given by
\begin{align*}
\mathcal{D} = \varinjlim_{n \in \mathbb{N}} \mathcal{H}_n = \bigcup_{n \in \mathbb{N}} \mathcal{H}_n.
\end{align*}
Hence $\mathcal{D}$ is the locally Hilbert space (or the quantized domain) given by $\mathcal{E}$. Moreover, we obtain $\overline{\mathcal{D}} = \ell^2(\mathbb{N})$. For a more general construction, one can see Example 2.9 of \cite{BGP}. 
\end{example}

From now onwards, the notation $\mathcal{D} = \varinjlim\limits_{\alpha \in \Lambda} \mathcal{H}_\alpha$ will indicate that $\mathcal{D}$ is the locally Hilbert space given by a strictly inductive system (or an upward filtered family) $ \mathcal{E} = \{\mathcal{H}_\alpha\}_{\alpha \in \Lambda}$ of Hilbert spaces. The reader may note that in article \cite{BGP}, the authors used the notation $\{ \mathcal{H}; \mathcal{E}; \mathcal{D} \}$, whereas in the work of \cite{MJ3}, the author used the notation $\mathcal{D}_\mathcal{E}$. However, we stick to the symbol $\mathcal{D}$ while denoting locally Hilbert space given by some strictly inductive system of Hilbert spaces. Next, we recall the notion of a locally bounded operator between locally Hilbert spaces.

\begin{definition}\cite[Section 2]{BGP} \label{def;lbo}
Let $\mathcal{D} = \varinjlim\limits_{\alpha \in \Lambda} \mathcal{H}_\alpha = \bigcup\limits_{\alpha \in \Lambda} \mathcal{H}_\alpha$ and $\mathcal{O} = \varinjlim\limits_{\alpha \in \Lambda} \mathcal{K}_\alpha = \bigcup\limits_{\alpha \in \Lambda} \mathcal{K}_\alpha$ be two locally Hilbert spaces. A linear map $T : \mathcal{D} \rightarrow \mathcal{O}$ is said to be a locally bounded operator if 
\begin{equation*}
    T(\mathcal{H}_{\alpha}) \subseteq \mathcal{H}_{\alpha},\; \; \; T(\mathcal{H}_{\alpha}^{\bot}\cap \mathcal{D}) \subseteq \mathcal{H}_{\alpha}^{\bot}\cap \mathcal{D} \; \; \; \text{and}\; T\big|_{\mathcal{H}_{\alpha}} \in \mathcal{B}(\mathcal{H}_{\alpha})\; 
\end{equation*}
\text{for each}\; $\alpha \in \Lambda.$
\end{definition}
Let $\mathcal{D}$ and $\mathcal{O}$ be the locally Hilbert spaces given by the strictly inductive systems of Hilbert spaces $\mathcal{E} = \{ \mathcal{H}_\alpha \}_{\alpha \in \Lambda}$ and $\mathcal{F} = \{ \mathcal{H}_\alpha \}_{\alpha \in \Lambda}$ respectively. We denote the collection of all locally bounded operators from $\mathcal{D}$ to $\mathcal{O}$ by the notation $C^*_{\mathcal{E}, \mathcal{F}}(\mathcal{D}, \mathcal{O})$. In particular, $C^*_{\mathcal{E}, \mathcal{E}}(\mathcal{D}, \mathcal{D}) = C^*_\mathcal{E}(\mathcal{D})$.
\begin{example} \label{ex;lbo}
Let $\mathcal{D}$ be the locally Hilbert space given by the strictly inductive system $\mathcal{E}$ as in Example \ref{ex;lhs}. Define a operator $T : \mathcal{D} \rightarrow \mathcal{D}$ by using the following matrix
\begin{align*}
T= \begin{pmatrix}
1      & 0      & 0      & \cdots & 0      & \cdots \\
0      & 2      & 0      & \cdots & 0      & \cdots \\
0      & 0      & 3      & \cdots & 0      & \cdots \\
\vdots & \vdots & \vdots & \ddots & \vdots & \vdots\\
0      & 0      & 0      & \cdots & m      &  \cdots\\
0      & 0      & 0      & \cdots & 0      & \ddots
\end{pmatrix}.
\end{align*}
This gives $Te_n = n e_n$ for each $n \in \mathbb{N}$. Clearly, $T$ is a densely defined closed operator. For each $n \in \mathbb{N}$, $\mathcal{H}_{n}= span\{e_{1}, e_{2}, \cdots, e_{n}\}$ is a reducing subspace of $T$ and the restriction $T\big|_{\mathcal{H}_{n}} \in \mathcal{B}(\mathcal{H}_{n})$ is given by  
\begin{align*}
T\big|_{\mathcal{H}_{n}} = 
\begin{pmatrix}
1      & 0      & 0      & \cdots & 0  \\
0      & 2      & 0      & \cdots & 0  \\
0      & 0      & 3      & \cdots & 0  \\  
\vdots & \vdots & \vdots & \ddots & \vdots \\
0      & 0      & 0      & \cdots & n      
\end{pmatrix}.
\end{align*}
Therefore, $T$ is a locally bounded operator, but not bounded.
\end{example}

\begin{remark}\cite[Subection 1.4]{AG} \label{rem;lbo}
Let $T \in C^*_{\mathcal{E}, \mathcal{F}}(\mathcal{D}, \mathcal{O})$, and for each $\alpha \in \Lambda$ take $T_\alpha = T\big|_{\mathcal{H}_{\alpha}}$. For a fixed $\alpha \in \Lambda$, we denote the inclusion maps by the notations $J_{\mathcal{D}, \alpha} : \mathcal{H}_\alpha \rightarrow \mathcal{D}$ and $J_{\mathcal{O}, \alpha} : \mathcal{K}_\alpha \rightarrow \mathcal{O}$. Then the collection $\{ T_\alpha \}_{\alpha \in \Lambda}$ satisfies the following properties:
\begin{enumerate}
\item for each $\alpha \in \Lambda$, $T_\alpha \in \mathcal{B}(\mathcal{H}_\alpha, \mathcal{K}_\alpha)$ and $T J_{\mathcal{D}, \alpha} = J_{\mathcal{O}, \alpha} T_\alpha$;
\item for $\alpha, \beta \in \Lambda$ with $\alpha \leq \beta$, we get $T^*_\beta\big|_{\mathcal{K}_{\alpha}} = T^*_\alpha$.
\end{enumerate}
\end{remark}
In view of Remark \ref{rem;lbo} and following the notations of \cite{AG}, we say that every $T \in C^{\ast}_{\mathcal{E}}(\mathcal{D})$ can be seen as a projective (or inverse) limit of the net $\{T_{\alpha}\}_{\alpha \in \Lambda}$ of bounded operators. That is, 
\begin{equation} \label{eq; inverese limit of bounded operators}
    T = \varprojlim\limits_{\alpha \in \Lambda} T_{\alpha}.
\end{equation}
Now we are in a position to discuss the notion of locally $C^{\ast}$-algebra. For a comprehensive study of such algebras and local completely positive maps, one can see \cite{BGP, AG, AD, MJ3, AI} and references therein. 
\subsection{Locally $C^{\ast}$-algebra} Let $\mathcal{A}$ be a unital $\ast$-algebra. A seminorm $p$ on $\mathcal{A}$ is said to be a $C^{\ast}$-seminorm, if 
\begin{multicols}{2}
    \begin{enumerate}
        \item $p(1_\mathcal{A}) = 1$\\
        \item $p(ab) \leq p(a) p(b)$
        \item $p(a^\ast) = p(a)$\\
        \item $p(a^*a) = p(a)^2,$
    \end{enumerate}
\end{multicols}
\noindent for all $a,b \in \mathcal{A}$. Let $(\Lambda, \leq)$ be a directed poset and $\mathcal{P} := \{ p_\alpha \;  : \;   \alpha \in \Lambda \}$ be a family of $C^{\ast}$-seminorms defined on a the $\ast$-algebra $\mathcal{A}$. Then $\mathcal{P}$ is called a upward filtered family if for each $a \in \mathcal{A}$, we have  $p_\alpha(a) \leq p_\beta(a)$, whenever $\alpha \leq \beta$.

\begin{definition} \cite[Definition 2.1]{BGP} \label{def;lca}
A unital $*$-algebra $\mathcal{A}$ which is complete with respect to the locally convex topology generated by an upward filtered family $\{ p_\alpha \;  : \;  \alpha \in \Lambda \}$ of C*-seminorms is called a locally $C^{\ast}$-algebra.
\end{definition}

It is well known that every locally $C^{\ast}$-algebra can be realized as the projective limit of a projective system of $C^{\ast}$-algebras. The construction of such projective system is given in \cite{BGP, AG}. However, we recall a few points here: Let $\mathcal{A}$ be a locally $C^{\ast}$-algebra. Then for each $\alpha \in \Lambda$, take $\mathcal{I}_\alpha := \{ a \in \mathcal{A} \; : \; p_\alpha(a) = 0 \}$, which is a two-sided closed ideal in $\mathcal{A}$, then $\mathcal{A}_\alpha : = {\mathcal{A}}\!/\!{\mathcal{I}_\alpha}$ is a $C^{\ast}$-algebra with respect to the $C^{\ast}$-norm induced by $p_\alpha$. Whenever $\alpha \leq \beta$, since $P_\alpha(a) \leq p_\beta(a)$ for each $a \in \mathcal{A}$, there is a $C^\ast$-homomorphism (surjective) $\pi_{\alpha, \beta} : \mathcal{A}_\beta \rightarrow \mathcal{A}_\alpha$ given by $\pi_{\alpha, \beta}(a + \mathcal{I}_\beta) = a + \mathcal{I}_\alpha$. This shows that $\left ( \{ \mathcal{A}_\alpha \}_{\alpha \in \Lambda},  \{ \pi_{\alpha, \beta} \}_{\alpha \leq \beta}\right )$ forms a projective system of $C^{\ast}$-algebras. There is a canonical projection map $\pi_\alpha : \mathcal{A} \rightarrow \mathcal{A}_\alpha$ for each $\alpha \in \Lambda$ satisfying 
\begin{align}\label{eq;ps}
\pi_{\alpha, \beta} \circ \pi_\beta = \pi_\alpha,\; \text{whenever} \; \alpha \leq \beta.
\end{align} 
Equivalently, the pair $(\mathcal{A}, \{ \pi_\alpha \}_{\alpha \in \Lambda})$ is compatible with the projective system  $\left ( \{ \mathcal{A}_\alpha \}_{\alpha \in \Lambda},  \{ \pi_{\alpha, \beta} \}_{\alpha \leq \beta} \right )$ \big (see Subsection 1.1 of \cite{AG} \big ). Further, we consider the projective limit of the projective system $\left ( \{ \mathcal{A}_\alpha \}_{\alpha \in \Lambda},  \{ \pi_{\alpha, \beta} \}_{\alpha \leq \beta} \right )$, which is given by \big (refer Subsection 1.1 of \cite{AG} \big)
\begin{align}\label{eq;pl}
\varprojlim\limits_{\alpha \in \Lambda} \mathcal{A}_\alpha := \left \{ \{x_\alpha\}_{\alpha \in \Lambda} \in \prod_{\alpha \in \Lambda} \mathcal{A}_\alpha ~~ : ~~ \pi_{\alpha, \beta}(x_\beta) = x_\alpha, ~~ \text{whenever} ~~ \alpha \leq \beta \right \}.
\end{align}
Here the topology on $\varprojlim\limits_{\alpha \in \Lambda} \mathcal{A}_\alpha$ is the weakest locally convex topology that makes each linear map $\phi_\alpha : \varprojlim\limits_{\alpha \in \Lambda} \mathcal{A}_\alpha \rightarrow \mathcal{A}_\alpha$ defined by $\phi_\alpha(\{x_\alpha\}_{\alpha \in \Lambda}) := x_\alpha$ is continuous. It is known as the projective limit topology. Since $\mathcal{A}_\alpha$ is complete for each $\alpha \in \Lambda$, the projective limit $\varprojlim\limits_{\alpha \in \Lambda} \mathcal{A}_\alpha$ is complete with respect to the projective limit topology \big (see Subsection 1.1 of \cite{AG} \big ). Moreover, the pair 
$\big(\varprojlim\limits_{\alpha \in \Lambda} \mathcal{A}_\alpha, \{ \phi_\alpha \}_{\alpha \in \Lambda} \big)$ is compatible with the projective system $\left ( \{ \mathcal{A}_\alpha \}_{\alpha \in \Lambda},  \{ \pi_{\alpha, \beta} \}_{\alpha \leq \beta} \right )$ in the sense of Equation \eqref{eq;ps}. It follows that the map $\phi : \mathcal{A} \rightarrow \varprojlim\limits_{\alpha \in \Lambda} \mathcal{A}_\alpha$ given by $a \mapsto \{a + \mathcal{I}_\alpha\}_{\alpha \in \Lambda}$ is a continuous linear map satisfying,
\begin{equation}\label{eq;c}
    \pi_\alpha = \phi_\alpha \circ \phi,\; \text{for every}\; \alpha \in \Lambda.
\end{equation}
If there is any other continuous linear map $\phi^\prime : \mathcal{A} \rightarrow \varprojlim\limits_{\alpha \in \Lambda} \mathcal{A}_\alpha$ satisfying, $\pi_\alpha = \phi_\alpha \circ \phi^\prime$ for every $\alpha \in \Lambda$, then for $a \in \mathcal{A}$, we have $\phi^\prime(a) \in \varprojlim\limits_{\alpha \in \Lambda} \mathcal{A}_\alpha$ and $\phi_\alpha \big (  \phi^\prime(a) \big ) = \pi_\alpha(a) = \phi_\alpha \big (  \phi(a) \big )$ for every $\alpha \in \Lambda$. Since each $\pi_\alpha$ is a canonical projection, we get $\phi^\prime(a) = \phi(a)$ for every $a \in \mathcal{A}$. In conclusion, there is a unique continuous linear map $\phi : \mathcal{A} \rightarrow \varprojlim\limits_{\alpha \in \Lambda} \mathcal{A}_\alpha$ satisfying Equation \eqref{eq;c}.
 
From here onwards, we write $\mathcal{A} = \varprojlim\limits_{\alpha \in \Lambda} \mathcal{A}_\alpha$ with the understanding that there exists such a unique continuous linear map $\phi$ satisfying Equation \eqref{eq;c}. 


\begin{remark} \label{rem;pl,sh}
In this remark, we show that the projective limit of a projective system of unital $C^\ast$-algebras is a unital locally $C^\ast$-algebra. 
\begin{enumerate}
\item  If we start with a projective system $\left ( \{ \mathcal{B}_\alpha \}_{\alpha \in \Lambda},  \{ \psi_{\alpha, \beta} \}_{\alpha \leq \beta} \right )$ of unital $C^{\ast}$-algebras, where $\psi_{\alpha, \beta} : \mathcal{B}_\beta \rightarrow \mathcal{B}_\alpha$ is a unital surjective $C^\ast$-homomorphism, whenever $\alpha \leq \beta$, then we get the inverse limit $\varprojlim\limits_{\alpha \in \Lambda} \mathcal{B}_\alpha$ of the projective system $\left ( \{ \mathcal{B}_\alpha \}_{\alpha \in \Lambda},  \{ \psi_{\alpha, \beta} \}_{\alpha \leq \beta} \right )$ defined as in Equation \eqref{eq;pl}. Further, for any two elements $\{x_\alpha\}_{\alpha \in \Lambda}$ and $\{y_\alpha\}_{\alpha \in \Lambda}$ in $\varprojlim\limits_{\alpha \in \Lambda} \mathcal{B}_\alpha$ , we define 
\begin{align*}
\{x_\alpha\}_{\alpha \in \Lambda} \cdot \{y_\alpha\}_{\alpha \in \Lambda} := \{x_\alpha y_\alpha\}_{\alpha \in \Lambda} \; \; \text{and} \; \; \{x_\alpha\}_{\alpha \in \Lambda}^\ast := \{x_\alpha^\ast\}_{\alpha \in \Lambda}.
\end{align*}
We see that if $\{x_\alpha\}_{\alpha \in \Lambda},\; \{y_\alpha\}_{\alpha \in \Lambda} \in \varprojlim\limits_{\alpha \in \Lambda} \mathcal{B}_{\alpha} $ then $\{x_\alpha y_\alpha\}_{\alpha \in \Lambda} \in \varprojlim\limits_{\alpha \in \Lambda} \mathcal{B}_{\alpha}$ and $\{x^{\ast}_{\alpha}\} \in \varprojlim\limits_{\alpha \in \Lambda}\mathcal{B}_{\alpha}$ since
\begin{align*}
    \psi_{\alpha, \beta} (x_\beta y_\beta) = \psi_{\alpha, \beta} (x_\beta) \psi_{\alpha, \beta} (y_\beta) =  x_\alpha y_\alpha
\end{align*}
and 
\begin{equation*}
    \psi_{\alpha, \beta} (x_\beta^\ast) = (\psi_{\alpha, \beta} (x_\beta))^\ast = x_\alpha^\ast, \; \; \text{whenever} \; \alpha \leq \beta.
\end{equation*}
Next, for each $\beta \in \Lambda$, define a $C^\ast$-seminorm on $\varprojlim\limits_{\alpha \in \Lambda} \mathcal{B}_\alpha$ by 
\begin{equation} \label{Eq: seminorm}
    p_\beta \big (\{x_\alpha\}_{\alpha \in \Lambda} \big ) := \| x_\beta \|_{\mathcal{B}_\beta},\; \text{for all}\; \{x_\alpha\}_{\alpha \in \Lambda} \in  \varprojlim\limits_{\alpha \in \Lambda} \mathcal{B}_\alpha.
\end{equation}
Since $\psi_{\alpha, \beta}$ is a contraction (whenever $\alpha \leq \beta$), for each $\{x_\alpha\}_{\alpha \in \Lambda} \in \varprojlim\limits_{\alpha \in \Lambda} \mathcal{B}_\alpha$, we get 
\begin{align*}
p_\alpha \big  (\{x_\alpha\}_{\alpha \in \Lambda} \big  ) = \big  \| x_\alpha \big  \|_{\mathcal{B}_\alpha} = \big  \| \psi_{\alpha, \beta} (x_\beta) \big  \|_{\mathcal{B}_\alpha} \leq \big  \| x_\beta \big  \|_{\mathcal{B}_\beta} = p_\beta \big  (\{x_\alpha\}_{\alpha \in \Lambda} \big ).
\end{align*} 
That is, $\{ p_\alpha \}_{\alpha \in \Lambda}$ is an upward filtered family of $C^\ast$-seminorms on $\varprojlim\limits_{\alpha \in \Lambda} \mathcal{B}_\alpha$. We know that the projective limit topology on $\varprojlim\limits_{\alpha \in \Lambda} \mathcal{B}_\alpha$ is the weakest locally convex topology that makes the $\ast$-homomorphism $\psi_\beta : \varprojlim\limits_{\alpha \in \Lambda} \mathcal{B}_\alpha \rightarrow \mathcal{B}_\beta$ given by $\psi_\beta \big  (\{ x_\alpha \}_{\alpha \in \Lambda} \big  ) = x_\beta$ continuous for every $\beta \in \Lambda$. Then from Equation \eqref{Eq: seminorm}, one can see that the locally convex topology induced by the family $\{p_\alpha\}_{\alpha \in \Lambda}$ coincides with the projective limit topology on $\varprojlim\limits_{\alpha \in \Lambda} \mathcal{B}_\alpha$. Since each $\mathcal{B}_\alpha$ is a complete space, we get that the inverse limit $\varprojlim\limits_{\alpha \in \Lambda} \mathcal{B}_\alpha$ is also a complete space \big  (see Subsection 1.1 of \cite{AG} \big  ). Hence, in view of Definition \ref{def;lca}, we obtain that the locally convex space $\varprojlim\limits_{\alpha \in \Lambda} \mathcal{B}_\alpha$ is a unital locally $C^\ast$-algebra.

\item From the above discussion, we get that $\varprojlim\limits_{\alpha \in \Lambda} \mathcal{A}_\alpha$ defined in Equation \eqref{eq;pl} is a locally $C^\ast$-algebra and the map $\phi$ appeared in Equation \eqref{eq;c} is a $\ast$-homomorphism.
\end{enumerate}
\end{remark}
\begin{example} \label{ex;lca}
Let $\mathcal{D}$ be the locally Hilbert space given by the strictly inductive system $\mathcal{E}$ as in Example \ref{ex;lhs}. Here $\Lambda = \mathbb{N}$ and consider the family $\big  \{ \mathcal{B}(\mathcal{H}_n) \big  \}_{n \in \mathbb{N}}$ of $C^{\ast}$-algebras. For $m \leq n$, define a map $\phi_{m, n} : \mathcal{B}(\mathcal{H}_n) \rightarrow \mathcal{B}(\mathcal{H}_m)$ by
\begin{align*}
 \phi_{m, n}(S) = J_{n,m}^* S J_{n,m},
\end{align*}
where $J_{n,m} : \mathcal{H}_m \rightarrow \mathcal{H}_n$ is the inclusion map. Then the family $\left ( \{ \mathcal{B}(\mathcal{H}_n) \}_{n \in \mathbb{N}},  \{\phi_{m,n} \}_{m \leq n} \right )$ forms a projective system of $C^{\ast}$-algebras. Now consider the projective limit $\varprojlim\limits_{n \in \mathbb{N}} \mathcal{B}(\mathcal{H}_n)$ that is defined as in Equation \eqref{eq;pl} along with the family $\{ \phi_n \}_{n \in \mathbb{N}}$, where $\phi_n : \varprojlim\limits_{n \in \mathbb{N}} \mathcal{B}(\mathcal{H}_n) \rightarrow \mathcal{B}(\mathcal{H}_n)$ given by $\{T_n\}_{n \in \mathbb{N}} \mapsto T_n$ is a surjective $\ast$-homomorphism satisfying $\phi_{m,n} \circ \phi_n = \phi_m$, whenever $m \leq n$. This means that the pair $\left(  \varprojlim\limits_{n \in \mathbb{N}} \mathcal{B}(\mathcal{H}_n), \{ \phi_n \}_{n \in \mathbb{N}} \right)$ is compatible with the projective system $\left ( \{ \mathcal{B}(\mathcal{H}_n) \}_{n \in \mathbb{N}},  \{\phi_{m,n} \}_{m \leq n} \right )$. On the other hand, for every $n \in \mathbb{N}$, there is a surjective $\ast$-homomorphism $\psi_n : C^*_\mathcal{E}(\mathcal{D}) \rightarrow \mathcal{B}(\mathcal{H}_n)$ given by $T \mapsto T\big|_{\mathcal{H}_n}$ satisfying, $\phi_{m,n} \circ \psi_n = \psi_m$, whenever $m \leq n$. Equivalently, the pair $\left(C^*_\mathcal{E}(\mathcal{D}) , \{ \psi_n \}_{n \in \mathbb{N}} \right )$ is also compatible with the projective system $\left ( \{ \mathcal{B}(\mathcal{H}_n) \}_{n \in \mathbb{N}},  \{\phi_{m,n} \}_{m \leq n} \right )$.  Thus there exists a unique unital $\ast$-homomorphism \big (refer Subsection 1.1 of \cite{AG} \big ) $\psi : C^*_\mathcal{E}(\mathcal{D}) \rightarrow \varprojlim\limits_{n \in \mathbb{N}} \mathcal{B}(\mathcal{H}_n)$ given by $\psi(T) = \big \{ T\big|_{\mathcal{H}_n} \big \}_{n \in \mathbb{N}}$ such that $\psi_n = \phi_n \circ \psi$. In other words $C^*_\mathcal{E}(\mathcal{D}) = \varprojlim\limits_{n \in \mathbb{N}} \mathcal{B}(\mathcal{H}_n)$. Hence, $C^*_\mathcal{E}(\mathcal{D})$ is a unital locally $C^{\ast}$-algebra. The following commuting diagram will summarize Example \ref{ex;lca}.
\begin{center}
\begin{tikzcd}[sep=huge]
& C^*_\mathcal{E}(\mathcal{D}) 
\arrow[dl, "\psi_m"']  
\arrow[rr, "\psi"] 
\arrow[dr, "\psi_n"] & & \varprojlim\limits_{n \in \mathbb{N}} \mathcal{B}(\mathcal{H}_n) 
\arrow[dl, "\phi_n"'] \arrow[dr, "\phi_m"] \\
\mathcal{B}(\mathcal{H}_m)  & &  \mathcal{B}(\mathcal{H}_n) \arrow[to path={node[midway,scale=2.5] {$\circlearrowright$}}] \arrow[ll, "\phi_{m,n}"] \arrow[rr, "\phi_{m,n}"'] & &\mathcal{B}(\mathcal{H}_m)
\end{tikzcd}
\end{center}
\end{example}



\subsection{Locally von Neumann algebra} In this subsection, we recall some results from the notion of locally von Neumann algebra. Let $\mathcal{D}$ be the locally Hilbert space given by a strictly inductive system $\mathcal{E} = \{\mathcal{H}_{\alpha}\}_{\alpha \in \Lambda}$ of Hilbert spaces. If $u, v \in \mathcal{D}$, then $u, v \in \mathcal{H}_\gamma$ for some $\gamma \in \Lambda$ and we define
\begin{equation*}
q_{u}(T) := \| Tu \|_{\mathcal{H}_\gamma} \; \; \text{and} \; \;q_{u,v}(T) := \left | \langle u, Tv \rangle_{\mathcal{H}_\gamma} \right | \; \; \text{for all} \; T \in C^*_\mathcal{E}(\mathcal{D}).
\end{equation*}
Thus $q_u$ and $q_{u, v}$ are $C^\ast$-seminorms. Then
\begin{enumerate}
\item[(a)] (SOT) {\it strong operator topology} on $C^*_\mathcal{E}(\mathcal{D})$ is the locally convex topology generated by the family $\{q_u ~~ : ~~ u \in \mathcal{D} \}$ of $C^\ast$-seminorms;
\item[(b)] (WOT) {\it weak operator topology} on $C^*_\mathcal{E}(\mathcal{D})$ is the locally convex topology generated by the family $\{q_{u,v} ~~ : ~~ u, v \in \mathcal{D} \}$ of $C^\ast$-seminorms.
\end{enumerate}
For a detailed introduction to locally von Neumann algebras, a reader is directed to \cite{MF, MJ1, MJ2}.

\begin{definition}\cite[Definition 3.7]{MJ1}
Let $\mathcal{D}$ be a locally Hilbert space given by a strictly inductive system $\mathcal{E}$ of locally Hilbert spaces. Then a locally von Neumann algebra is a strongly closed unital locally $C^{\ast}$-algebra contained in $C^*_\mathcal{E}(\mathcal{D})$.
\end{definition}

Let $\mathcal{M} \subseteq C^*_\mathcal{E}(\mathcal{D})$. Consider the set $\mathcal{M}^\prime := \{ T \in C^*_\mathcal{E}(\mathcal{D}) ~~:~~ TS = ST ~~ \text{for all} ~~ S \in \mathcal{M} \}$ which is called as the commutant of $\mathcal{M}$. We denote $(\mathcal{M}^\prime)^\prime$ by the notation $\mathcal{M}^{\prime \prime}$. The following theorem proved in \cite{MJ1} is the double commutant theorem in the setting of locally von Neumann algebra. 

\begin{theorem}\cite[Theorem 3.6]{MJ1}
Let $\mathcal{M} \subseteq C^*_\mathcal{E}(\mathcal{D})$ be a locally $C^{\ast}$-algebra containing the identity operator on $\mathcal{D}$. Then the following statements are equivalent:
\begin{enumerate}
\item $ \mathcal{M} = \mathcal{M}^{\prime \prime}$;
\item $\mathcal{M}$ is weakly closed;
\item $\mathcal{M}$ is strongly closed.
\end{enumerate}
\end{theorem}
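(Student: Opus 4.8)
The plan is to prove the cyclic chain $(1)\Rightarrow(2)\Rightarrow(3)\Rightarrow(1)$, where the first two implications are formal and the third carries essentially all of the content.

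For $(1)\Rightarrow(2)$ I would first record that the commutant of \emph{any} subset of $C^*_\mathcal{E}(\mathcal{D})$ is weakly closed. Fixing $S\in\mathcal{M}$, the identities $q_{u,v}(TS)=q_{u,Sv}(T)$ and $q_{u,v}(ST)=q_{S^*u,v}(T)$ (valid since each $\mathcal{H}_\gamma$ is reducing for the locally bounded operator $S$, so $Sv,S^*u\in\mathcal{H}_\gamma$) show that $T\mapsto TS$ and $T\mapsto ST$ are WOT--WOT continuous. Hence $\{T:TS=ST\}$ is WOT-closed, and $\mathcal{M}'=\bigcap_{S\in\mathcal{M}}\{T:TS=ST\}$ is WOT-closed; applying this to $\mathcal{M}'$ in place of $\mathcal{M}$ shows $\mathcal{M}''$ is WOT-closed, so $\mathcal{M}=\mathcal{M}''$ forces $\mathcal{M}$ to be WOT-closed. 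For $(2)\Rightarrow(3)$ I would use the pointwise estimate $q_{u,v}(T)=\lvert\langle u,Tv\rangle_{\mathcal{H}_\gamma}\rvert\le \|u\|_{\mathcal{H}_\gamma}\,q_v(T)$, which says every WOT-seminorm is dominated by an SOT-seminorm; thus the WOT is coarser than the SOT and a WOT-closed set is automatically SOT-closed.

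The substantial implication is $(3)\Rightarrow(1)$. Since $\mathcal{M}\subseteq\mathcal{M}''$ always holds, it suffices to prove $\mathcal{M}''\subseteq\mathcal{M}$, and the strategy is to descend to the defining projective system and invoke the classical von Neumann double commutant theorem at each level. Writing $\mathcal{M}_\alpha:=\pi_\alpha(\mathcal{M})=\{T|_{\mathcal{H}_\alpha}:T\in\mathcal{M}\}$, a unital $*$-subalgebra of $\mathcal{B}(\mathcal{H}_\alpha)$, completeness of the locally $C^*$-algebra $\mathcal{M}$ gives $\mathcal{M}=\varprojlim_{\alpha}\mathcal{M}_\alpha$. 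The orthogonal projection $P_\alpha$ of $\mathcal{D}$ onto $\mathcal{H}_\alpha$ lies in $C^*_\mathcal{E}(\mathcal{D})$ and commutes with every locally bounded operator (each $\mathcal{H}_\alpha$ is reducing, by Definition \ref{def;lbo}), so $P_\alpha\in\mathcal{M}'$ for all $\alpha$. Because every $\mathcal{H}_\alpha$ is reducing, restriction to $\mathcal{H}_\alpha$ is multiplicative, and one checks directly the inclusions $(\mathcal{M}')_\alpha\subseteq(\mathcal{M}_\alpha)'$ and $(\mathcal{M}'')_\alpha\subseteq\big((\mathcal{M}')_\alpha\big)'$. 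If for each $\alpha$ one can upgrade the first inclusion to the equality $(\mathcal{M}')_\alpha=(\mathcal{M}_\alpha)'$ and show that $\mathcal{M}_\alpha$ is strongly closed in $\mathcal{B}(\mathcal{H}_\alpha)$, then the classical double commutant theorem yields $(\mathcal{M}'')_\alpha\subseteq\big((\mathcal{M}_\alpha)'\big)'=(\mathcal{M}_\alpha)''=\mathcal{M}_\alpha$. Consequently, for $T\in\mathcal{M}''$ the family $\{T|_{\mathcal{H}_\alpha}\}_{\alpha}$ is a compatible family with $T|_{\mathcal{H}_\alpha}\in\mathcal{M}_\alpha$, hence $T\in\varprojlim_\alpha\mathcal{M}_\alpha=\mathcal{M}$, completing the argument.

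I expect the main obstacle to be exactly the reverse inclusion $(\mathcal{M}_\alpha)'\subseteq(\mathcal{M}')_\alpha$, that is, an extension problem: a bounded operator on $\mathcal{H}_\alpha$ commuting with $\mathcal{M}_\alpha$ need not, a priori, be the restriction of a \emph{locally bounded} operator commuting with all of $\mathcal{M}$, since such a restriction must in addition respect the entire flag $\{\mathcal{H}_\beta\}_{\beta\le\alpha}$ (equivalently, commute with the subprojections $P_\beta|_{\mathcal{H}_\alpha}$) and extend compatibly to all larger indices. Controlling this interaction between the commutant operation and the projective/flag structure is precisely where the structural features of $C^*_\mathcal{E}(\mathcal{D})$ must be exploited, most notably that each $P_\beta$ already lies in $\mathcal{M}'$ and that the reducing-subspace hypothesis in Definition \ref{def;lbo} pins down the admissible extensions; the secondary point, lifting an SOT-convergent net in $\mathcal{M}_\alpha$ to $\mathcal{M}$ so that $\mathcal{M}_\alpha$ inherits strong closedness from $\mathcal{M}$, should then follow by a similar compatibility argument. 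I would concentrate the bulk of the effort on establishing $(\mathcal{M}')_\alpha=(\mathcal{M}_\alpha)'$, as every other step reduces to routine bookkeeping with the projective limit once this equality is in hand.
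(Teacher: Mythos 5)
First, a point of comparison: the paper does not prove this statement at all — it is quoted verbatim from \cite[Theorem 3.6]{MJ1} and used as a black box — so there is no in-paper argument to measure you against; your proposal has to stand on its own. Your implications $(1)\Rightarrow(2)$ and $(2)\Rightarrow(3)$ do stand: the commutant of any subset of $C^*_\mathcal{E}(\mathcal{D})$ is WOT-closed because left and right multiplication by a fixed $S$ are WOT--WOT continuous (your seminorm identities are legitimate since each $\mathcal{H}_\gamma$ reduces $S$ and $S^*$), and the estimate $q_{u,v}(T)\le\|u\|_{\mathcal{H}_\gamma}\,q_v(T)$ correctly shows that WOT-closed sets are SOT-closed.

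The implication $(3)\Rightarrow(1)$, however, is not proved: the entire argument is conditional on the equality $(\mathcal{M}')_\alpha=(\mathcal{M}_\alpha)'$ and on the SOT-closedness of $\mathcal{M}_\alpha=\pi_\alpha(\mathcal{M})$ in $\mathcal{B}(\mathcal{H}_\alpha)$, and you establish neither. Worse, the first equality is false in general, so this route cannot be repaired as stated. Take $\mathcal{M}=\mathbb{C}\cdot\mathrm{Id}_{\mathcal{D}}$ and any $\alpha$ for which there is $\beta\le\alpha$ with $\{0\}\ne\mathcal{H}_\beta\subsetneq\mathcal{H}_\alpha$: then $(\mathcal{M}_\alpha)'=\mathcal{B}(\mathcal{H}_\alpha)$, whereas every element of $(\mathcal{M}')_\alpha$ is the restriction of a locally bounded operator and therefore commutes with $P_\beta\big|_{\mathcal{H}_\alpha}$, so $(\mathcal{M}')_\alpha\subsetneq(\mathcal{M}_\alpha)'$. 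Since taking commutants reverses inclusions, all you can conclude is $\big((\mathcal{M}')_\alpha\big)'\supseteq(\mathcal{M}_\alpha)''$, which points the wrong way, and the chain $(\mathcal{M}'')_\alpha\subseteq\big((\mathcal{M}')_\alpha\big)'\subseteq\mathcal{M}_\alpha$ collapses at the second step. A correct argument must either take the level-$\alpha$ commutant inside the flag-compatible algebra $\pi_\alpha\big(C^*_\mathcal{E}(\mathcal{D})\big)$ rather than inside all of $\mathcal{B}(\mathcal{H}_\alpha)$, or run the classical cyclic-projection approximation directly in $C^*_\mathcal{E}(\mathcal{D})$: for $T\in\mathcal{M}''$ and $u_1,\dots,u_k\in\mathcal{H}_\gamma$ one must show that the projection onto the closure of $\mathcal{M}^{(k)}(u_1\oplus\cdots\oplus u_k)$ is itself locally bounded and lies in the commutant of the amplification. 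That verification, together with the lifting problem you mention (a net in $\mathcal{M}$ whose restrictions to $\mathcal{H}_\alpha$ converge strongly need not converge on any larger $\mathcal{H}_\beta$, so $\pi_\alpha(\mathcal{M})$ does not obviously inherit strong closedness), is the actual content of the theorem and is missing from the proposal.
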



\section{Direct integral of locally Hilbert spaces} \label{sec; Direct integrals}
Motivated from the theory of direct integral of Hilbert spaces, we propose an approach to define the notion of direct integral in locally Hilbert space setting. Before we define this formally, it is required to understand the terminology analogous to the standard measure space. In view of this, we introduce the concept of ``locally standard measure space" below.

\subsection{Locally standard measure space}
First, we recall the definition of the standard measure space from Section 4.4.1 of \cite{OB1}. A Borel measure space $X$ with a finite positive measure is called a standard measure space, if the Borel structure on $X$ is defined by a complete, separable, metric space (also see Chapter 4, Section 8 of \cite{Tak1}). For example, consider the Borel measure space $X = [0,1]$ with the Lebesgue measure. The following definition is inspired by the concept of a strictly inductive system of measurable spaces discussed in \cite{AG2}.

\begin{definition} \label{def;sisms}
Let $\Lambda$ be a directed POSET. We say that a family $\{(X_\alpha, \Sigma_\alpha)\}_{\alpha \in \Lambda}$ forms a strictly inductive system of measurable spaces, if 
\begin{enumerate}
\item $X_\alpha \subseteq X_\beta$;
\item $\Sigma_\alpha = \{ E \cap X_\alpha ~~ : ~~ E \in \Sigma_\beta \}$ (this implies $\Sigma_\alpha \subseteq \Sigma_\beta$),
\end{enumerate}
whenever $\alpha \leq \beta$.
\end{definition}


Next, we give the construction of inductive limit in this context. Suppose $\{(X_\alpha, \Sigma_\alpha)\}_{\alpha \in \Lambda}$ is a strictly inductive system of measurable spaces, then define 
\begin{equation} \label{eq;sigma algebra}
X = \bigcup\limits_{\alpha \in \Lambda} X_\alpha \; \; \; \; \; \text{and} \; \; \; \; \; \Sigma := \{ E \subseteq X ~~ : ~~ E \cap X_\alpha \in \Sigma_\alpha, ~~ \text{for all} ~~ \alpha \in \Lambda \}.
\end{equation}
Now, we prove that the collection $\Sigma$ is a $\sigma$-algebra.

\begin{proposition} \label{prop;sa}
The set $\Sigma$ defined above is a $\sigma$-algebra. 
\end{proposition}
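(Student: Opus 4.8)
The plan is to verify directly the three defining axioms of a $\sigma$-algebra for $\Sigma$, working ``slicewise'' through the membership criterion that $E \in \Sigma$ precisely when $E \cap X_\alpha \in \Sigma_\alpha$ for every $\alpha \in \Lambda$. Since each $(X_\alpha, \Sigma_\alpha)$ is a measurable space, every $\Sigma_\alpha$ is already a $\sigma$-algebra on $X_\alpha$, so the whole argument reduces to pushing the three closure properties of $\Sigma_\alpha$ through the operation of intersecting with $X_\alpha$. First I would check that $X \in \Sigma$: for each $\alpha$ we have $X \cap X_\alpha = X_\alpha$, and $X_\alpha \in \Sigma_\alpha$ because $\Sigma_\alpha$ is a $\sigma$-algebra on $X_\alpha$; hence $X \in \Sigma$ (equivalently one may record $\emptyset \in \Sigma$).

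Next, for closure under complements, I would take $E \in \Sigma$ and fix $\alpha$. The key set identity is $(X \setminus E) \cap X_\alpha = X_\alpha \setminus (E \cap X_\alpha)$; since $E \cap X_\alpha \in \Sigma_\alpha$ by hypothesis and $\Sigma_\alpha$ is closed under complementation \emph{inside} $X_\alpha$, the right-hand side lies in $\Sigma_\alpha$, and as $\alpha$ was arbitrary we get $X \setminus E \in \Sigma$. For countable unions, I would take $\{E_n\}_{n \in \mathbb{N}} \subseteq \Sigma$ and fix $\alpha$; here the relevant identity is the distributivity of intersection over union, $\bigl( \bigcup_{n} E_n \bigr) \cap X_\alpha = \bigcup_n (E_n \cap X_\alpha)$. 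Each $E_n \cap X_\alpha \in \Sigma_\alpha$, so the right-hand side is a countable union of members of the $\sigma$-algebra $\Sigma_\alpha$ and therefore belongs to $\Sigma_\alpha$. Thus $\bigcup_n E_n \in \Sigma$, which completes the three verifications.

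The argument has no genuine obstacle: the only points that demand any care are the two elementary set-theoretic identities above, namely taking the complement \emph{within} $X_\alpha$ rather than within $X$, and distributing the intersection across the countable union. It is worth noting that this particular statement uses only that each $\Sigma_\alpha$ is a $\sigma$-algebra together with $X = \bigcup_{\alpha \in \Lambda} X_\alpha$; the compatibility conditions (1) and (2) of Definition \ref{def;sisms} are not invoked here, although they are of course essential for the inductive-limit structure developed subsequently.
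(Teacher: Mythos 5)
Your proposal is correct and follows essentially the same route as the paper: both arguments verify the $\sigma$-algebra axioms slicewise, using the identity $(X\setminus E)\cap X_\alpha = X_\alpha \setminus (E\cap X_\alpha)$ (written in the paper as $(E\cap X_\alpha)^c\cap X_\alpha = E^c\cap X_\alpha$) for complements and the distributivity of intersection over countable unions. Your explicit check that $X\in\Sigma$ is a small addition the paper leaves implicit, and your closing remark that only the $\sigma$-algebra structure of each $\Sigma_\alpha$ and the covering $X=\bigcup_\alpha X_\alpha$ are needed is accurate.
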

\begin{proof}
Let $E \in \Sigma$. This means $E \subseteq X$, and $E \cap X_\alpha \in \Sigma_\alpha$ for all $\alpha \in \Lambda$. Since $\Sigma_\alpha$ is a $\sigma$-algebra, we have $(E \cap X_\alpha)^c \cap X_\alpha \in \Sigma_\alpha$ for all $\alpha \in \Lambda$. However, 
\begin{equation*}
(E \cap X_\alpha)^c \cap X_\alpha = E^c \cap X_\alpha.
\end{equation*}
Therefore, $E^c \cap X_\alpha \in \Sigma_\alpha$ for all $\alpha \in \Lambda$. Hence, $E^c \in \Sigma$. This shows that the set $\Sigma$ is closed under the set complement. Next, we show that the set $\Sigma$ is closed under countable union. Let $\{ E_n ~~ | ~~ E_n \in \Sigma , n \in \mathbb{N} \}$ be a collection of subsets of $X$ in $\Sigma$. This means that for each fixed $n \in \mathbb{N}$, we have $E_n \subseteq X$ and $E_n \cap X_\alpha \in \Sigma_\alpha$ for all $\alpha \in \Lambda$. If $\alpha \in \Lambda$ is fixed, then 
\begin{align*}
\big (\bigcup\limits_{n \in \mathbb{N}} E_n \big ) \bigcap X_\alpha = \bigcup\limits_{n \in \mathbb{N}} \big (E_n \bigcap X_\alpha \big ) \in \Sigma_\alpha
\end{align*}
Since $\alpha \in \Lambda$ was chosen arbitrarily, we get $\bigcup\limits_{n \in \mathbb{N}} E_n \in \Sigma$. This proves that the set $\Sigma$ is a $\sigma$-algebra.
\end{proof}

\subsection{Observations I} \label{obs;inductive system of MS} Here we list out a few observations related to the notion of strictly inductive system $\{(X_\alpha, \Sigma_\alpha)\}_{\alpha \in \Lambda}$ of measurable spaces.
\begin{enumerate}
\item \label{pt;csims1} It follows from Proposition \ref{prop;sa} that $(X, \Sigma)$ is a measurable space and for each $\alpha \in \Lambda$, the inclusion map $J_\alpha : X_\alpha \rightarrow X$ is measurable. Whenever $\alpha \leq \beta$, we have $\Sigma_\alpha \subseteq \Sigma_\beta$ and the inclusion map $J_{\beta, \alpha} : X_\alpha \rightarrow X_\beta$ is measurable such that 
\begin{equation*} 
J_\beta \circ J_{\beta, \alpha} = J_\alpha.
\end{equation*}
In this situation, we say that $\big ((X, \Sigma), \{J_\alpha\}_{\alpha \in \Lambda} \big)$ is compatible with $\big(\{(X_\alpha, \Sigma_\alpha)\}_{\alpha \in \Lambda}, \{J_{\beta, \alpha}\}_{\alpha \leq \beta} \big)$.

\item \label{pt;csims2} Let $(Y, \Omega)$ be a measurable space and $f_\alpha : X_\alpha \rightarrow Y$ be a measurable map for each $\alpha \in \Lambda$. Suppose $\big ((Y, \Omega), \{f_\alpha\}_{\alpha \in \Lambda} \big)$ is also compatible with $\big(\{(X_\alpha, \Sigma_\alpha)\}_{\alpha \in \Lambda}, \{J_{\beta, \alpha}\}_{\alpha \leq \beta} \big)$, that is, 
\begin{equation} \label{eq;csims1}
f_\beta \circ J_{\beta, \alpha} = f_\alpha, \; \; \text{whenever} \; \; \alpha \leq \beta,
\end{equation}
then we define a map $\Phi : X \rightarrow Y$ by $\Phi(x) := f_\alpha(x)$ for some $\alpha \in \Lambda$ such that $x \in X_\alpha$. From Equation \eqref{eq;csims1}, we get that the map $\Phi$ is well defined. Now we show that the map $\Phi$ is measurable. Let $E \in \Omega$, for any $\beta \in \Lambda$, we obtain  
\begin{equation*}
X_\beta \cap \Phi^{-1}(E) = \{ x \in X_\beta ~~ : ~~  \Phi(x) = f_\beta(x) \in E \} = f^{-1}_\beta(E) \in \Sigma_\beta.
\end{equation*}
By using the definition of the $\sigma$-algebra $\Sigma$ and since $\beta \in \Lambda$ was arbitrarily chosen, we get that the map $\Phi$ is measurable. Moreover, the map $\Phi$ satisfies the following condition
\begin{equation} \label{eq;csims2}
\Phi \circ J_\alpha = f_\alpha \; \; \text{for each} \; \; \alpha \in \Lambda.
\end{equation}

\item \label{pt;csims3} We show that the such a measurable map from $X$ to $Y$ satisfying Equation \eqref{eq;csims2} is unique. Suppose there is another measurable map $\Psi : X \rightarrow Y$ satisfying Equation \eqref{eq;csims2}, that is, $\Psi \circ J_\alpha = f_\alpha$ for each $\alpha \in \Lambda$. Then for an arbitrary $x \in X$ such that $x \in X_\alpha$ for some $\alpha \in \Lambda$, we get 
\begin{equation*}
\Psi(x) = \Psi \circ J_\alpha(x) = f_\alpha(x) \; \; \text{and} \; \; \Phi(x) = \Phi \circ J_\alpha(x) = f_\alpha(x).
\end{equation*}
This shows that $\Psi(x) = \Phi(x)$. Since $x 
\in X$ was arbitrarily chosen, we obtain $\Psi = \Phi$. This gives the existence of a unique measurable map $\Phi : X \rightarrow Y$ such that $\Phi \circ J_\alpha = f_\alpha$ for each $\alpha \in \Lambda$. 

\item \label{obs; 1,4} In view of (\ref{pt;csims1}), (\ref{pt;csims2}) and (\ref{pt;csims3}) we call the measurable space $(X, \Sigma)$ as the inductive limit of the strictly inductive system of measurable spaces $\big \{(X_\alpha, \Sigma_\alpha) \big \}_{\alpha \in \Lambda}$ and we denote this by 
\begin{equation*}
(X, \Sigma) = \varinjlim\limits_{\alpha \in \Lambda} \;(X_\alpha, \Sigma_\alpha). 
\end{equation*}
\end{enumerate}

We conclude the above discussion by using the following commutative diagram.
\begin{center}
\begin{tikzcd}[sep=huge]
&  X   \arrow[rr, "\Phi"]  & & Y \\
X_\beta  \arrow[ur, "J_\beta"]  & & X_\alpha \arrow[to path={node[midway,scale=3] {$\circlearrowright$}}] \arrow[ll, "J_{\beta, \alpha}"] \arrow[ul, "J_\alpha"'] \arrow[ur, "f_\alpha"] \arrow[rr, "J_{\beta, \alpha}"']  & & \arrow[ul, "f_\beta"']  X_\beta 
\end{tikzcd}
\end{center}

\begin{remark}
    If we define $\Sigma_{0} =  \bigcup\limits_{\alpha \in \Lambda} \Sigma_\alpha $, then $\Sigma_{0}$ may not be a $\sigma$-algebra (see Example \ref{Eg: sigma0}). However, we see that $\Sigma_{0} \subseteq \Sigma.$ Suppose $E \subseteq X$ such that $E \in \Sigma_0$, meaning $E \in \Sigma_{\alpha^\prime}$ for some $\alpha^\prime \in \Lambda$. This implies $E \in \Sigma_{\beta}$ for all $\beta \in \Lambda$ such that $\alpha^\prime \leq \beta$. Let $\alpha \in \Lambda$. Then there exists $\beta^\prime \in \Lambda$ such that $\alpha \leq \beta^\prime$ and $\alpha^\prime \leq \beta^\prime$. Therefore, $E \in \Sigma_{\beta^\prime}$, and $E \cap X_\alpha \in \Sigma_\alpha$. Since $\alpha$ was chosen arbitrarily, we get $E \in \Sigma$. This proves $\Sigma_0 \subseteq \Sigma$.
\end{remark}
\noindent
In the following example, we illustrate that $\Sigma_0$, the union of $\sigma$-algebras is not necessarily a $\sigma$-algebra.

\begin{example} \label{Eg: sigma0}
Consider the family of measurable spaces $\big \{([-n, n], \Sigma_n) \big \}_{n \in \mathbb{N}}$, where $\Sigma_n$ denotes the $\sigma$-algebra of Lebesgue measurable subsets of $[-n, n]$. For each $n \in \mathbb{N}$, we have $[-n, n] \in \Sigma_n$. However, $\bigcup\limits_{n \in \mathbb{N}} [-n, n] = \mathbb{R} \notin \Sigma_0 = \bigcup\limits_{n \in \mathbb{N}} \Sigma_n$. This example demonstrates that $\Sigma_0$ is not necessarily a $\sigma$-algebra. 
\end{example}

Next, we introduce the notion of projective system of finite measures. We begin by considering a strictly inductive system $\{(X_\alpha, \Sigma_\alpha)\}_{\alpha \in \Lambda}$ of measurable spaces. Suppose for each $\alpha \in \Lambda$, $(X_\alpha, \Sigma_\alpha, \mu_\alpha)$ is a finite (positive) measure space, then we call the family $\{\mu_\alpha\}_{\alpha \in \Lambda}$ a \textit{projective system of measures}, if for each $E_\alpha \in \Sigma_\alpha$, we have
\begin{equation*}
\mu_\alpha(E_\alpha) = \mu_\beta(E_\alpha),  \; \; \text{whenever} \; \;  \alpha \leq \beta.
\end{equation*}
This implies that for every $E \in \Sigma$, we see that 
\begin{equation*}
\mu_\alpha(E \cap X_\alpha) = \mu_\beta(E \cap X_\alpha)
\leq \mu_\beta(E \cap X_\beta).
\end{equation*}

\begin{proposition} \label{prop;m}
Suppose $\{(X_\alpha, \Sigma_\alpha, \mu_\alpha)\}_{\alpha \in \Lambda}$ is a family of finite measure spaces such that $\{(X_\alpha, \Sigma_\alpha)\}_{\alpha \in \Lambda}$ is a strictly inductive system of measurable spaces and $\{\mu_\alpha\}_{\alpha \in \Lambda}$ is a projective system of measures. If $(X, \Sigma) = \varinjlim\limits_{\alpha \in \Lambda} \;(X_\alpha, \Sigma_\alpha)$, then the map $\mu : \Sigma \rightarrow [0, \infty]$ defined by 
\begin{equation*}
\mu(E) := \begin{cases}
\lim\limits_\alpha \; \mu_\alpha(E \cap X_\alpha), & \text{if} \;\; \{ \mu_\alpha(E \cap X_\alpha) \}_{\alpha \in \Lambda} \; \text{converges};\\
& \\
\infty, & \text{otherwise}
\end{cases} 
\end{equation*} 
is a measure. In this situation, we call the measure $\mu$ as the projective limit of the projective system $\{\mu_\alpha\}_{\alpha \in \Lambda}$ of measures and denote by the notation $\mu = \varprojlim\limits_{\alpha \in \Lambda} \;\mu_\alpha$. 
\end{proposition}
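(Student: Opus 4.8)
The plan is to first observe that, for every $E \in \Sigma$, the net $\{\mu_\alpha(E \cap X_\alpha)\}_{\alpha \in \Lambda}$ is monotone increasing, so that the case distinction in the definition collapses and $\mu$ reduces to a single supremum. Indeed, if $\alpha \leq \beta$ then $E \cap X_\alpha \in \Sigma_\alpha$, so the projective-system hypothesis gives $\mu_\alpha(E \cap X_\alpha) = \mu_\beta(E \cap X_\alpha)$, while $E \cap X_\alpha \subseteq E \cap X_\beta$ with both sets in $\Sigma_\beta$ forces $\mu_\beta(E \cap X_\alpha) \leq \mu_\beta(E \cap X_\beta)$; combining these is exactly the inequality recorded just before the statement. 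A monotone increasing net in $[0,\infty)$ always converges in $[0,\infty]$ to its supremum: it converges to a finite limit precisely when bounded, and otherwise the second branch of the definition assigns $\infty$, which again is the supremum. Hence for every $E \in \Sigma$ one has the clean formula
\begin{equation*}
\mu(E) = \sup_{\alpha \in \Lambda} \mu_\alpha(E \cap X_\alpha),
\end{equation*}
and I would run the rest of the argument with this supremum description.

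Next I would verify the two defining properties of a measure. That $\mu(\emptyset) = 0$ is immediate, since $\mu_\alpha(\emptyset \cap X_\alpha) = \mu_\alpha(\emptyset) = 0$ for every $\alpha$. For countable additivity, let $\{E_n\}_{n \in \mathbb{N}} \subseteq \Sigma$ be pairwise disjoint with union $E \in \Sigma$. For each fixed $\alpha$, the slices $\{E_n \cap X_\alpha\}_n$ are pairwise disjoint members of $\Sigma_\alpha$ whose union is $E \cap X_\alpha$, so countable additivity of the finite measure $\mu_\alpha$ gives $\mu_\alpha(E \cap X_\alpha) = \sum_{n} \mu_\alpha(E_n \cap X_\alpha)$. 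Taking the supremum over $\alpha$ then reduces the entire proposition to the interchange identity
\begin{equation*}
\sup_{\alpha \in \Lambda} \, \sum_{n} a_{\alpha, n} = \sum_{n} \, \sup_{\alpha \in \Lambda} a_{\alpha, n}, \qquad a_{\alpha, n} := \mu_\alpha(E_n \cap X_\alpha),
\end{equation*}
where, by the monotonicity established above, each map $\alpha \mapsto a_{\alpha, n}$ is increasing along the directed set $\Lambda$.

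I expect this interchange of a directed supremum with a countable sum to be the main obstacle, since it is a net version of the monotone convergence theorem and genuinely uses the directedness of $\Lambda$ rather than any sequential structure. The inequality $\leq$ is automatic, because $\sum_n a_{\alpha,n} \leq \sum_n \sup_\beta a_{\beta, n}$ for each $\alpha$. For the reverse inequality I would fix a finite $N$ and an $\epsilon > 0$, choose for each $n \leq N$ an index $\alpha_n$ with $a_{\alpha_n, n}$ within $\epsilon/N$ of $\sup_\alpha a_{\alpha, n}$ (taking $a_{\alpha_n,n}$ arbitrarily large when the supremum is $\infty$), and then invoke directedness of $\Lambda$ to pick a single $\alpha^\ast$ dominating $\alpha_1, \dots, \alpha_N$. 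Monotonicity in $\alpha$ yields $a_{\alpha^\ast, n} \geq a_{\alpha_n, n}$ for each $n \leq N$, whence
\begin{equation*}
\sup_{\alpha \in \Lambda} \sum_{n} a_{\alpha, n} \geq \sum_{n=1}^{N} a_{\alpha^\ast, n} \geq \sum_{n=1}^{N} \sup_{\alpha \in \Lambda} a_{\alpha, n} - \epsilon.
\end{equation*}
Letting $\epsilon \to 0$ and then $N \to \infty$ completes the reverse inequality, and together with $\mu(\emptyset) = 0$ this shows that $\mu$ is a measure. The bookkeeping in the infinite cases is routine under the convention that both sides equal $\infty$ as soon as any relevant supremum is, and I would dispose of it in a single remark rather than tracking it separately throughout.
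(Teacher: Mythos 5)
Your proposal is correct and follows essentially the same route as the paper: both establish monotonicity of the net $\alpha \mapsto \mu_\alpha(E \cap X_\alpha)$ from the projective-system hypothesis and then reduce countable additivity to interchanging the directed limit (equivalently, supremum) with the countable sum. The only difference is that the paper disposes of that interchange by citing the monotone convergence theorem, whereas you prove the required net version explicitly via directedness; this is a harmless (indeed welcome) elaboration of the same step rather than a different argument.
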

\begin{proof}
Let $\emptyset$ denote the empty set of $X$. Then $\mu(\emptyset) = \lim\limits_\alpha \{ \mu_\alpha(\emptyset \cap X_\alpha) \} = 0.$ Further, assume that $\{ E_n \in \Sigma ~~ : ~~  n \in \mathbb{N} \}$ is a collection of pairwise disjoint subsets of $X$ in $\Sigma$. Then, we obtain
\begin{align*} 
\mu \big (\bigcup_{n \in \mathbb{N}} E_n \big ) &= \lim\limits_{\alpha} \; \mu_\alpha \big (\big (\bigcup_{n \in \mathbb{N}} E_n \big ) \bigcap X_\alpha \big ) \\
&= \lim\limits_{\alpha} \; \mu_\alpha  \big (\bigcup_{n \in \mathbb{N}} \left (E_n \bigcap X_\alpha \right ) \big) \\
&= \lim\limits_{\alpha} \; \sum_{n=1}^{\infty} \mu_\alpha \big (E_n \bigcap X_\alpha \big ) 
\end{align*}
and 
\begin{align*} 
\sum_{n=1}^{\infty} \mu(E_n) = \sum_{n=1}^{\infty} \lim\limits_{\alpha} \; \mu_\alpha \big  (E_n \bigcap X_\alpha \big).
\end{align*}

For each $\alpha \in \Lambda$, define a function $f_\alpha : \mathbb{N} \rightarrow [0, \infty)$ by $f_\alpha(n) := \mu_\alpha(E_n \cap X_\alpha).$ Whenever $\alpha \leq \beta$, we have $f_\alpha(n) = \mu_\alpha(E_n \cap X_\alpha) \leq f_\beta(n) = \mu_\beta(E_n \cap X_\beta)$ for all $n \in \mathbb{N}$. If we define a function $f : \mathbb{N} \rightarrow [0, \infty]$ by $f(n) := \mu(E_n)$, then for all $n \in \mathbb{N}$ we get $\lim\limits_\alpha f_\alpha(n) = f(n)$. By using the monotone convergence theorem, we have 
\begin{equation*}
\lim\limits_\alpha \sum\limits_{n=1}^{\infty} f_\alpha(n) = \sum\limits_{n=1}^{\infty} f(n).
\end{equation*}
This implies that
\begin{align*}
\lim\limits_\alpha \sum_{n=1}^{\infty} \mu_\alpha \left (E_n \bigcap X_\alpha \right ) = \lim\limits_\alpha \sum\limits_{n=1}^{\infty} f_\alpha(n) = \sum\limits_{n=1}^{\infty} f(n) &= \sum_{n=1}^{\infty} \mu (E_n) \\
&= \sum_{n=1}^{\infty} \lim\limits_\alpha \mu_\alpha \big (E_n \bigcap X_\alpha \big ) \\
&= \mu \big (\bigcup_{n \in \mathbb{N}} E_n \big ).
\end{align*}
This proves $\mu \big (\bigcup\limits_{n \in \mathbb{N}} E_n \big ) = \sum\limits_{n=1}^{\infty} \mu(E_n)$, and hence the map $\mu$ is a measure.
\end{proof}

\begin{definition} (Locally standard measure space) \label{def; lsms}
We call the measure space $(X, \Sigma, \mu)$ obtained in Proposition \ref{prop;m} a \textbf{locally standard measure space}, if $(X_\alpha, \Sigma_\alpha, \mu_\alpha)$ is a  standard measure space for each $\alpha \in \Lambda$. That is, $(X, \Sigma) = \varinjlim\limits_{\alpha \in \Lambda} \;(X_\alpha, \Sigma_\alpha)$ and $\mu =\varprojlim\limits_{\alpha \in \Lambda} \;\mu_\alpha$, where $(X_\alpha, \Sigma_\alpha, \mu_\alpha)$ is a standard measure space for each $\alpha \in \Lambda$.
\end{definition}

Throughout this article, $(X, \Sigma, \mu)$ indicates a locally standard measure space along with the family $\{ (X_\alpha, \Sigma_\alpha, \mu_\alpha)\}_{\alpha \in \Lambda}$ of standard measure spaces such that $(X, \Sigma) = \varinjlim\limits_{\alpha \in \Lambda} \;(X_\alpha, \Sigma_\alpha)$ and $\mu =\varprojlim\limits_{\alpha \in \Lambda} \;\mu_\alpha$, unless otherwise stated.

\begin{example} \label{ex;lsms}
Let $\Lambda = \mathbb{N}$ and $X_n = [-n, n]$ for each $n \in \mathbb{N}$. Suppose $B(X_n)$ denotes the Borel $\sigma$-algebra of $X_n$ and $\mu_n$ denotes the Lebesgue measure on $B(X_n)$. Then $\{(X_n, B(X_n), \mu_n)\}_{n \in \mathbb{N}}$ is a family of standard measure spaces such that $\{(X_n, B(X_n)\}_{n \in \mathbb{N}}$ is a strictly inductive system of measurable spaces and $\{\mu_n\}_{n \in \mathbb{N}}$ is a projective system of finite measures. As we know that a subset $U$ of $\mathbb{R}$ is Borel if and only if $U \cap X_n$ is Borel for every $n \in \mathbb{N}$, it follows from Equation \eqref{eq;sigma algebra} and (\ref{obs; 1,4}) of Observations I that  
\begin{equation*}
    (\mathbb{R}, B(\mathbb{R})) = \varinjlim\limits_{n \in \mathbb{N}} \;(X_n, B(X_n)).
\end{equation*}  
Moreover, if $\mu$ is the Lebesgue measure of $\mathbb{R}$ and $E \in B(\mathbb{R})$, then $E = \bigcup\limits_{n \in \mathbb{N}}(E \cap X_n)$, where 
\begin{equation*}
E \cap X_m \subseteq E \cap X_n, \; \; \text{whenever} \; \; m \leq n \; \; \text{and} \; \; \mu_n(E \cap X_n) = \mu (E \cap X_n) \; \; \text{for all} \; \; n \in \mathbb{N}.
\end{equation*} 
This implies that either $\mu(E) = \infty$ or $\mu(E) = \lim\limits_{n \to \infty} \mu (E \cap X_n) = \lim\limits_{n \to \infty} \mu_n (E \cap X_n)$. Thus $\mu = \varprojlim\limits_n \mu_n$. Hence, $(\mathbb{R}, B(\mathbb{R}), \mu)$ is a locally standard measure space.
\end{example}

\begin{remark}
Let $(X, \Sigma, \mu)$ be a locally standard measure space. Then the set $X$ is equipped with the inductive limit topology, which is the strongest topology that makes the inclusion map $J_\alpha : X_\alpha \rightarrow X$ continuous for each $\alpha \in \Lambda$. In fact, $U \subseteq X$ is open in $X$ with respect to the inductive limit topology if and only if $U \cap X_\alpha$ is open in 
$X_\alpha$ for every $\alpha \in \Lambda$.
\end{remark}

\subsection{Direct integral of locally Hilbert spaces}
Now we are in a position to state the notion of direct integral of locally Hilbert spaces.

\begin{definition} \label{Defn: directint_loc}
Let $(\Lambda, \leq)$ be a directed POSET and $(X, \Sigma, \mu)$ be a locally standard measure space. For each $p \in X$,  assign a locally Hilbert space $\mathcal{D}_p = \varinjlim\limits_{\alpha \in \Lambda} \mathcal{H}_{\alpha, p}$, where $\{ \mathcal{H}_{\alpha, p} \}_{\alpha \in \Lambda}$ forms a strictly inductive system of separable Hilbert spaces. The direct integral of locally Hilbert spaces $\{ \mathcal{D}_p \}_{p \in X}$ over the locally standard measure space $(X, \Sigma, \mu)$ comprises functions of the form $u : X \rightarrow \bigcup\limits_{p \in X} \mathcal{D}_p$ with $u(p) \in \mathcal{D}_p$ for all $p \in X$ satisfying the following conditions: 
\begin{enumerate}
\item for each $u : X \rightarrow \bigcup\limits_{p \in X} \mathcal{D}_p$, there exists $\alpha_u \in \Lambda$ such that the support of $u$, 
\begin{equation*}
\text{supp}(u) := \{ p \in X \; : \; u(p) \neq 0_{\mathcal{D}_p}  \} \subseteq X_{\alpha_u}
\end{equation*} 
and $u(p) \in \mathcal{H}_{\alpha_u, p}$ for almost every $p \in X_{\alpha_u}$; 
\item for any $u$ and $v$ satisfying the property (1), the function $\zeta_{u,v} : X \rightarrow \mathbb{C}$ defined by 
\begin{equation*}
\zeta_{u,v}(p) := \langle u(p), v(p) \rangle_{\mathcal{D}_p}, \; \text{for all}\; p \in X
\end{equation*}
is in  $\text{L}^1(X, \mu)$;
\item let $\{ v_q \}_{q \in X}$ be a family of vectors such that $v_q \in \mathcal{D}_q$ for all $q \in X$ and there exists $\alpha \in \Lambda$ such that $v_q = 0_{\mathcal{D}_q}$ for all $q \in X \setminus X_{\alpha}$. For each $u : X \rightarrow \bigcup\limits_{p \in X} \mathcal{D}_p$ with $u(p) \in \mathcal{D}_p$ for all $p \in X$, define the function $\eta_{u, \{ v_q \}_{q \in X}} : X \rightarrow \mathbb{C}$ by 
\begin{equation*}
\eta_{u, \{ v_q \}_{q \in X}}(p) := \la u(p), v_{p} \ra_{\mathcal{D}_p} \; \; \text{for all} \; \; p \in X.
\end{equation*} 
If the map $\eta_{u, \{ v_q \}_{q \in X}} \in \text{L}^1(X, \mu)$ for every $u$ satisfying the property (1) and $\zeta_{u,u} \in \text{L}^1(X, \mu)$, then there exists a function  $v : X \rightarrow \bigcup\limits_{p \in X} \mathcal{D}_p$ satisfying the property (1) and $\zeta_{v,v} \in \text{L}^1(X, \mu)$ such that $v(p) = v_p$ for almost every $p \in X$.  
\end{enumerate}
In this case, we denote the direct integral of locally Hilbert spaces $\{ \mathcal{D}_p \}_{p \in X}$ over the locally standard measure space $(X, \Sigma, \mu)$ by $\displaystyle \dilX \mathcal{D}_p \, \dmu$ and the vector $u \in \displaystyle \dilX \mathcal{D}_p \, \dmu$ by $\displaystyle \dilX u(p) \, \dmu$.
\end{definition}


Next, we show that the set $\displaystyle\dilX \mathcal{D}_p \, \dmu$ is indeed a locally Hilbert space.

\begin{proposition} \label{prop;dilhs}
Let $(X, \Sigma, \mu)$ be a locally standard measure space and $\{ \mathcal{D}_p \}_{p \in X}$ be a family of locally Hilbert spaces, where $\mathcal{D}_p = \varinjlim\limits_{\alpha \in \Lambda} \mathcal{H}_{\alpha, p}$. Then $\displaystyle \dilX \mathcal{D}_p \, \dmu$ is a locally Hilbert space. 
\end{proposition}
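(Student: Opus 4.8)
The plan is to realize $\dilX \mathcal{D}_p \, \dmu$ as the inductive limit of a strictly inductive system of genuine Hilbert spaces, each obtained from the classical direct integral of Definition \ref{def;dihs}. For each $\alpha \in \Lambda$ the triple $(X_\alpha, \Sigma_\alpha, \mu_\alpha)$ is a standard measure space and $\{\mathcal{H}_{\alpha, p}\}_{p \in X_\alpha}$ is a family of separable Hilbert spaces, so I set
\begin{equation*}
\mathcal{K}_\alpha := \int^\oplus_{X_\alpha} \mathcal{H}_{\alpha, p} \, \mathrm{d}\mu_\alpha(p),
\end{equation*}
which is a separable Hilbert space by the classical theory. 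The goal is then to prove that $\{\mathcal{K}_\alpha\}_{\alpha \in \Lambda}$ is a strictly inductive system in the sense of Definition \ref{def;sis} and that $\dilX \mathcal{D}_p \, \dmu$ coincides, as a set of (a.e.\ classes of) vector fields, with its inductive limit $\varinjlim_\alpha \mathcal{K}_\alpha = \bigcup_\alpha \mathcal{K}_\alpha$.

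For $\alpha \leq \beta$ I would define the map $\mathcal{K}_\alpha \hookrightarrow \mathcal{K}_\beta$ by extension by zero: a square-integrable field $p \mapsto u(p)$ on $X_\alpha$ is viewed as a field on $X_\beta$ vanishing off $X_\alpha$, which lands in $\mathcal{K}_\beta$ because $\mathcal{H}_{\alpha, p} \subseteq \mathcal{H}_{\beta, p}$ for each $p \in X_\alpha$. To check isometry I would expand $\la u, v \ra_{\mathcal{K}_\beta}$ and use two ingredients: the inner products on $\mathcal{H}_{\alpha, p}$ and $\mathcal{H}_{\beta, p}$ agree on $\mathcal{H}_{\alpha, p}$ (part (3) of Definition \ref{def;sis}), and $\mu_\alpha(E) = \mu_\beta(E)$ for $E \in \Sigma_\alpha$ (the projective system of measures). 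Since the extended field is supported in $X_\alpha$, both integrals collapse to the same integral over $X_\alpha$, giving $\la u, v \ra_{\mathcal{K}_\beta} = \la u, v \ra_{\mathcal{K}_\alpha}$. Hence $\{\mathcal{K}_\alpha\}_{\alpha \in \Lambda}$ is a strictly inductive system, and $\bigcup_\alpha \mathcal{K}_\alpha$ is a locally Hilbert space by Definition \ref{def;lhs}.

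It then remains to identify the two sets; the locally Hilbert structure transports automatically. For $\dilX \mathcal{D}_p \, \dmu \subseteq \bigcup_\alpha \mathcal{K}_\alpha$, given $u$ satisfying condition (1) of Definition \ref{Defn: directint_loc}, choose $\alpha_u$ so that $u$ is supported in $X_{\alpha_u}$ with $u(p) \in \mathcal{H}_{\alpha_u, p}$ almost everywhere; condition (2) with $v=u$ yields $\int_{X_{\alpha_u}} \| u(p) \|^2 \, \mathrm{d}\mu_{\alpha_u}(p) < \infty$, and the integrability of $p \mapsto \la u(p), y(p) \ra$ against arbitrary $y \in \mathcal{K}_{\alpha_u}$ is exactly the measurability requirement of Definition \ref{def;dihs}(1), so $u|_{X_{\alpha_u}} \in \mathcal{K}_{\alpha_u}$. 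Conversely, any $u \in \mathcal{K}_\alpha$, extended by zero, satisfies (1) with $\alpha_u = \alpha$ and satisfies (2) by Cauchy--Schwarz, so it belongs to $\dilX \mathcal{D}_p \, \dmu$. This gives the set equality and completes the proof.

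The main obstacle I anticipate is the careful bookkeeping needed to reconcile the completeness axiom, condition (3) of Definition \ref{Defn: directint_loc}, with the classical completeness axiom Definition \ref{def;dihs}(2), which must be verified to confirm that the constructed object $\bigcup_\alpha \mathcal{K}_\alpha$ genuinely satisfies the defining conditions. The delicate point is that the test in (3) ranges over \emph{all} admissible fields $u$ across every slice, whereas the classical axiom on a fixed $\mathcal{K}_\gamma$ uses test fields confined to that slice; one must argue that a family $\{v_q\}$ supported in some $X_\alpha$ and pairing integrably against all such $u$ does, after restriction, lie in a single $\mathcal{K}_\gamma$ for a suitable $\gamma$, which requires the filtered structure of $\Lambda$ to locate a common index together with the per-slice application of Definition \ref{def;dihs}(2). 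The compatibility of the underlying measurable vector-field structures across different $\alpha$ (so that a field measurable over $X_\beta$ restricts to, and extends from, one over $X_\alpha$) also needs attention, but this follows from clause (2) of Definition \ref{def;sisms} relating $\Sigma_\alpha$ and $\Sigma_\beta$ together with the projective compatibility of the measures.
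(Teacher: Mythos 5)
Your proposal is correct and follows essentially the same route as the paper: the paper defines $\mathcal{H}_\alpha$ intrinsically as the subset of $\dilX \mathcal{D}_p \, \dmu$ of fields supported in $X_\alpha$ with values in $\mathcal{H}_{\alpha,p}$, checks the isometric inclusions via the compatibility of the inner products and measures, and obtains completeness by exhibiting a unitary $V_\alpha$ onto the classical direct integral $\int^\oplus_{X_\alpha}\mathcal{H}_{\alpha,p}\,\mathrm{d}\mu_\alpha(p)$, which is exactly your identification of each slice with $\mathcal{K}_\alpha$ read in the other direction. The final step, that every element lies in some slice by property (1) so the union exhausts $\dilX \mathcal{D}_p \, \dmu$, is identical in both arguments.
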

\begin{proof}
To prove that $\displaystyle \dilX  \mathcal{D}_p \, \dmu$ is a locally Hilbert space, first we construct a strictly inductive system of Hilbert spaces. For each fixed $\alpha \in \Lambda$, we define the set $\mathcal{H}_\alpha$ by
\begin{equation}\label{eq; H alpha}
\mathcal{H}_\alpha := \left \{ u \in \dilX \mathcal{D}_p \, \dmu ~~ : ~~  \text{supp}(u) \subseteq X_\alpha, \; \;  u(p) \in \mathcal{H}_{\alpha, p} \; \; \text{for almost every} \; \; p \in X_\alpha \right \}.
\end{equation}
The set $\mathcal{H}_\alpha$ can be $\{ 0_{\mathcal{H}_\alpha} \}$ for some $\alpha \in \Lambda$. For instance, if the family $\{ \mathcal{D}_p \}_{p \in X}$ is such that for a fixed $\alpha \in \Lambda$, $\mathcal{H}_{\alpha, p} = \{ 0_{\mathcal{H}_{\alpha, p}}  \}$ for almost every $p \in X_\alpha$, then we get that $\mathcal{H}_\alpha$ to be the zero space. However, $\displaystyle \dilX  \mathcal{D}_p \, \dmu$ need not be the zero space. On the other hand, let $\alpha \in \Lambda$ be fixed, and $E \subseteq X_\alpha$ be such that $\mu (E) = \mu_\alpha(E) > 0$ with $\mathcal{H}_{\alpha, p}$ is non-trivial Hilbert space for all $p \in E$. Then consider the family $\{ v_p \}_{p \in X}$, where $v_p$ is a unit vector in $\mathcal{H}_{\alpha, p} \subseteq \mathcal{D}_p$ if $p \in E$ and $v_p = 0_{\mathcal{D}_p}$ if $p \in X \setminus E$. Suppose $u : X \rightarrow \bigcup\limits_{p \in X} \mathcal{D}_p$ is a function satisfying the property (1) of Definition \ref{Defn: directint_loc} and $\zeta_{u,u} \in \text{L}^1(X, \mu)$, \big(equivalently, the map $p \mapsto \big \| 
u(p) \big \|_{\mathcal{H}_{{\alpha_u}, p}}$ is in $\text{L}^2(X, \mu)$ \big) then 
\begin{align*}
\int_{X} \; \big | \la u(p), v_p  \ra \big| \; \dmu &\leq \int\limits_{X_{\alpha_u} \bigcap E} \; \| u(p) \|_{\mathcal{H}_{{\alpha_u}, p}}\; \| v_p \|_{\mathcal{H}_{\alpha, p}}\; \dmu \\
&= \int\limits_{X_{\alpha_u} \bigcap E} \; \| u(p) \|_{\mathcal{H}_{{\alpha_u}, p}}  \; \mathrm{d} \mu_{\alpha_u} \\
&< \infty.
\end{align*}
The last inequality holds true as the map $p \mapsto \big \| 
u(p) \big \|_{\mathcal{H}_{{\alpha_u}, p}}$ is in $\text{L}^2(X_{\alpha_u}, \mu_{\alpha_u})$, where $\mu_{\alpha_u}$ is a finite measure. So, from the property (3) of Definition \ref{Defn: directint_loc}, there exists $v \in \displaystyle \dilX  \mathcal{D}_p \, \dmu$ such that $v(p) = v_p$ if $p \in X_\alpha$ and $v(p) = 0_{\mathcal{D}_p}$ if $p \in X \setminus X_\alpha$. In particular, $0_{\mathcal{H}_\alpha} \neq v \in \mathcal{H}_\alpha$. Thus $\mathcal{H}_\alpha$ is non-trivial.

Let $u, v \in \mathcal{H}_\alpha$, and define
\begin{equation} \label{eq;ip alpha}
\langle u, v \rangle_{\mathcal{H}_\alpha} : = \int\limits_{X_{\alpha}} \big \langle u(p),\; v(p) \big \rangle_{\mathcal{H}_{\alpha, p}} \; \mathrm{d} \mu_{\alpha}.
\end{equation} 
This gives an inner product on $\mathcal{H}_\alpha$. Whenever $\alpha \leq \beta$, it is clear that $\mathcal{H}_\alpha \subseteq \mathcal{H}_\beta$ and since $X_\alpha \subseteq X_\beta$, for $u, v \in \mathcal{H}_\alpha$, we have
\begin{equation} \label{eq;ip alpha, beta}
\langle u, v \rangle_{\mathcal{H}_\beta} = \int\limits_{X_{\beta}} \big \langle u(p),\; v(p) \big \rangle_{\mathcal{H}_{\beta, p}} \mathrm{d} \mu_{\beta} 
= \int\limits_{X_{\alpha}} \big \langle u(p),\; v(p) \big \rangle_{\mathcal{H}_{\alpha, p}} \mathrm{d} \mu_{\alpha} + \int\limits_{X_\beta \setminus X_{\alpha}} \big \langle u(p),\; v(p) \big \rangle_{\mathcal{H}_{\beta, p}} \mathrm{d} \mu_{\alpha} 
= \langle u, v \rangle_{\mathcal{H}_\alpha}.
\end{equation}
Thus the inclusion map $J_{\beta, \alpha} : \mathcal{H}_\alpha \rightarrow \mathcal{H}_\beta$ is an isometry. Now for each $\alpha \in \Lambda$, we will show that the inner product space $\mathcal{H}_\alpha$ is complete. Fix $\alpha \in \Lambda$, and consider 
$\int^\oplus_{X_\alpha} \mathcal{H}_{\alpha, p} \, \mathrm{d} \mu_\alpha (p)$, the direct integral of the family $\{ \mathcal{H}_{\alpha, p} \}_{p \in X_\alpha}$ of Hilbert spaces over the standard measure space $ \big (X_\alpha, \Sigma_\alpha, \mu_\alpha \big )$. Now define $V_\alpha : \mathcal{H}_\alpha \rightarrow \int^\oplus_{X_\alpha} \mathcal{H}_{\alpha, p} \, \mathrm{d} \mu_\alpha (p)$ by 
\begin{equation} \label{eq;iso}
V_\alpha(u)(p) := u(p) \; \; \text{for all} \; \; p \in X_\alpha \; \; \text{and} \; \; u \in \mathcal{H}_\alpha.
\end{equation}
Then we get
\begin{align*}
\langle u, u \rangle_{\mathcal{H}_\alpha} &=
\int\limits_{X_{\alpha}} \big \langle u(p),\; u(p) \big \rangle_{\mathcal{H}_{\alpha, p}}\; \mathrm{d} \mu_{\alpha} \\
&= \int\limits_{X_{\alpha}} \big \langle V_\alpha(u)(p),\; V_\alpha(u)(p)\big \rangle_{\mathcal{H}_{\alpha, p}} \; \mathrm{d} \mu_{\alpha} = \big \langle V_\alpha(u), V_\alpha(u) \big \rangle.
\end{align*}
This proves that the map $V_\alpha$ is an isometry. Now, we show that the map $V_\alpha$ is surjective. If $v^\prime \in \int^\oplus_{X_\alpha} \mathcal{H}_{\alpha, p} \, \mathrm{d} \mu_\alpha (p)$, then consider a function $v \in \mathcal{H}_{\alpha}$ such that $v(p) = v^\prime(p)$ if $p \in X_\alpha$ and $v(p) = 0_{\mathcal{D}_p}$ if $p \in X \setminus X_\alpha$. Then we get $V_\alpha(v) = v^\prime$. Hence, the map $V_\alpha$ defines an isomorphism between the inner product space $\mathcal{H}_\alpha$ and the Hilbert space $\int^\oplus_{X_\alpha} \mathcal{H}_{\alpha, p} \, \mathrm{d} \mu_\alpha (p)$. Since $\alpha \in \Lambda$ is arbitrary, each $\mathcal{H}_\alpha$ is a Hilbert space. Thus, $\{ \mathcal{H}_\alpha \}_{\alpha \in \Lambda}$ forms a strictly inductive system of Hilbert spaces and from Equation \eqref{eq;lhs}, we get that 
\begin{equation} \label{eq;il=u}
\varinjlim\limits_{\alpha \in \Lambda} \mathcal{H}_\alpha = \bigcup\limits_{\alpha \in \Lambda} \mathcal{H}_\alpha.
\end{equation} 

Clearly, $\bigcup\limits_{\alpha \in \Lambda} \mathcal{H}_\alpha \subseteq \displaystyle \dilX \mathcal{D}_p \, \dmu$. If $u \in \displaystyle \dilX \mathcal{D}_p \, \dmu$, then from the property (1) of Definition \ref{Defn: directint_loc} there exists $\alpha_u \in \Lambda$ such that $\text{supp}(u) \subseteq X_{\alpha_u}$ and $u(p) \in \mathcal{H}_{\alpha_u, p}$ for almost every $p \in X_{\alpha_u}$. This implies that $u \in \mathcal{H}_{\alpha_{u}}$. Hence, we get $\displaystyle \dilX \mathcal{D}_p \, \dmu \subseteq \bigcup\limits_{\alpha \in \Lambda} \mathcal{H}_\alpha$, and this proves 
\begin{equation} \label{eq;dilhs=u}
\dilX \mathcal{D}_p \, \dmu = \bigcup\limits_{\alpha \in \Lambda} \mathcal{H}_\alpha.
\end{equation}
Then Equation \eqref{eq;il=u} and Equation \eqref{eq;dilhs=u} imply that 
\begin{equation} \label{eq;dilhs=il}
\dilX \mathcal{D}_p \, \dmu = \varinjlim\limits_{\alpha} \mathcal{H}_\alpha.
\end{equation} 
Therefore, from Definition \ref{def;lhs}, we conclude that $\displaystyle \dilX \mathcal{D}_p \, \dmu$ is a locally Hilbert space.
\end{proof}

\begin{note}
Note that, one can define an inner product on $\displaystyle \dilX \mathcal{D}_p \, \dmu$ as follows. If $u, v \in \displaystyle \dilX \mathcal{D}_p \, \dmu$, then from Proposition \ref{prop;dilhs}, there exists $\alpha \in \Lambda$ such that $u, v \in \mathcal{H}_\alpha$  $(\alpha_u, \alpha_v \leq \alpha)$ and define 
\begin{equation*}
\langle u, v \rangle := \langle u, v \rangle_{\mathcal{H}_\alpha} = \int\limits_{X_{\alpha}} \big \langle u(p),\; v(p) \big \rangle_{\mathcal{H}_{\alpha, p}} \; \mathrm{d} \mu_{\alpha}.
\end{equation*}
In view of Equation \eqref{eq;ip alpha, beta}, the map $\la \cdot, \cdot \ra$ is well defined and it gives an inner product on $\displaystyle \dilX \mathcal{D}_p \, \dmu$. In particular, if $u \in \displaystyle \dilX \mathcal{D}_p \, \dmu$, then 
\begin{equation*}
\|u\|^{2} =  \int\limits_{X_{\alpha_{u}}} \|u(p)\|^{2}_{\mathcal{H}_{\alpha_{u}, p}} \mathrm{d} \mu_{\alpha_{u}}. \qedhere
\end{equation*}
\end{note}
\begin{example} \label{ex;dilhs}
In this example, we see two distinct cases, namely the discrete and the non discrete case.
\begin{enumerate}
\item \label{ex;dilhs1}
Consider the locally standard measure space $\big ( \mathbb{N}, \Sigma, \mu \big )$, where $\mu$ is the counting measure on $\mathbb{N}$ and $\{ \mathcal{D}_n \}_{n \in \mathbb{N}}$ is a sequence of locally Hilbert spaces. The direct integral is given by 
\begin{equation*}
\int^{\oplus_\text{loc}}_\mathbb{N} \mathcal{D}_n \; \mathrm{d} \mu(n) = \bigoplus_{n \in \mathbb{N}} \mathcal{D}_n.
\end{equation*}

\item \label{ex;dilhs2}
Consider the locally standard measure space $(\mathbb{R}, B(\mathbb{R}), \mu)$ (see Example \ref{ex;lsms}, here $\Lambda = \mathbb{N}$). For each $p \in \mathbb{R}$, assign a locally Hilbert space $\mathcal{D}_p = \varinjlim\limits_{n \in \mathbb{N}} \mathcal{H}_{n,p}$, where $\mathcal{H}_{n,p} = \mathbb{C}$ for each $n \in \mathbb{N}$. Then for each $p \in \mathbb{R}$, we get $\mathcal{D}_p = \mathbb{C}$ and from Proposition \ref{prop;dilhs}, we have $\displaystyle \int^{\oplus_\text{loc}}_{\mathbb{R}} \mathcal{D}_p \, \dmu = \varinjlim\limits_{n \in \mathbb{N}} \mathcal{H}_n$, where 
\begin{equation*}
\mathcal{H}_n := \left \{ u \in \int^{\oplus_\text{loc}}_{\mathbb{R}} \mathcal{D}_p \, \dmu \; \; : \; \;  \text{supp}(u) \subseteq [-n,n], \;  \; u(p) \in \mathcal{H}_{n, p} \; \text{for almost every} \; p \in [-n,n] \right \}.
\end{equation*}
Further, we know from the proof of Proposition \ref{prop;dilhs} that the Hilbert space $\mathcal{H}_n$ is isomorphic to $\int^\oplus_{[-n,n]} \mathcal{H}_{n, p} \, \mathrm{d} \mu_n$, the direct integral of the family $\{ \mathcal{H}_{n, p} \}_{p \in [-n, n]}$ of Hilbert spaces for each fixed $n \in \mathbb{N}$. That is, $\mathcal{H}_n$ consists of all Borel measurable functions $u : \mathbb{R} \rightarrow \mathbb{C}$ such that $\text{supp}(u) \subseteq [-n, n]$ and $\int\limits_{\mathbb{R}} |u(p)|^{2} \; \mathrm{d} \mu < \infty$. Therefore, 
\begin{equation*}
\displaystyle \int^{\oplus_\text{loc}}_{\mathbb{R}} \mathcal{D}_p \, \dmu = \Big\{ u \in \text{L}^2 \big (\mathbb{R}, B(\mathbb{R}), \mu \big) \; \; : \; \; \text{supp}(u) \subseteq [-n, n] \; \; \text{for some} \; \; n \in \mathbb{N}  \Big\}.
\end{equation*}
\end{enumerate}
\end{example}

\begin{remark}
By following Definition \ref{def;dihs}, we see that $\big (\mathbb{R}, B(\mathbb{R}), \mu \big)$ is a standard measure space. Since each $\mathcal{D}_p = \mathbb{C}$  in \ref{ex;dilhs2} of Example  \ref{ex;dilhs} is indeed a Hilbert space, we can consider the direct integral of Hilbert spaces $\{ \mathcal{D}_p = \mathbb{C}\}_{p \in \mathbb{R}}$ over the standard measure space $\big (\mathbb{R}, B(\mathbb{R}), \mu \big)$, which is  
\begin{align*}
\int^{\oplus}_{\mathbb{R}} \mathcal{D}_p \, \dmu &= \Big\{ u : \mathbb{R} \rightarrow \mathbb{C} \; \; : \; \; u \; \; \text{is Borel measurable and} \; \; \int\limits_{\mathbb{R}} |u(p)|^{2} \; \mathrm{d} \mu < \infty  \Big\} \\
&= \text{L}^2 \big (\mathbb{R}, B(\mathbb{R}), \mu \big).
\end{align*}
Further, it is clear from (\ref{ex;dilhs2}) of Example \ref{ex;dilhs} that $ \displaystyle \int^{\oplus_\text{loc}}_{\mathbb{R}} \mathcal{D}_p \, \dmu $ is dense subspace of $\int^{\oplus}_{\mathbb{R}} \mathcal{D}_p \, \dmu$. 
\end{remark}

Now we turn our attention towards a sub-collection of locally bounded operators defined on $\displaystyle \dilX \mathcal{D}_p \, \dmu$ that adheres to the notion of direct integrals. We introduce this class motivated from the classical setup. For this, we use the same set of notations as used in Definition \ref{Defn: directint_loc}. 

\begin{definition} \label{def;DecDiag(lbo)}
A locally bounded operator $T : \displaystyle \dilX \mathcal{D}_p \, \dmu \rightarrow \displaystyle \dilX \mathcal{D}_p \, \dmu$ is said to be:
\begin{enumerate} 
\item \label{def;Dec(lbo)} \textbf{decomposable}, if there exists a family $ \big \{ T_p : \mathcal{D}_p \rightarrow \mathcal{D}_p \big \}_{p \in X}$ of locally bounded operators such that for any $u \in \displaystyle \dilX \mathcal{D}_p \, \dmu$, we have
\begin{align*}
(Tu)(p) = T_pu(p)
\end{align*}
for almost every $p \in X$. In this case, we denote the operator $T$ by the notation $\displaystyle \dilX T_p \, \dmu$ and so
\begin{equation*}
\left (\dilX T_p \, \dmu \right ) \left (\dilX u(p) \, \dmu \right )= \dilX T_pu(p) \, \dmu;
\end{equation*}
\item \label{def;Diag(lbo)} \textbf{diagonalizable}, if $T$ is decomposable and there exists a measurable function $f : X \rightarrow \mathbb{C}$ such that for any $u \in \displaystyle \dilX \mathcal{D}_p \, \dmu$, we have 
\begin{equation*}
(Tu)(p) = f(p)u(p)
\end{equation*}
for almost every $p \in X$. In this situation, we get $T = \displaystyle \dilX T_p \, \dmu = \displaystyle \dilX f(p) \cdot \mathrm{Id}_{\mathcal{D}_p} \, \dmu$.
\end{enumerate}  
\end{definition}

\noindent
We denote the collection of all decomposable locally bounded operators and the collection of all diagonalizable locally bounded operators on $\displaystyle \dilX \mathcal{D}_p \, \dmu$ by $\mathcal{M}_{\text{DEC}}$ and $\mathcal{M}_{\text{DIAG}}$ respectively.

\begin{example}
Consider the direct integral of locally Hilbert spaces
\begin{equation*}
\displaystyle \int^{\oplus_\text{loc}}_{\mathbb{R}} \mathcal{D}_p \, \dmu = \Big\{ u \in \text{L}^2 \big (\mathbb{R}, B(\mathbb{R}), \mu \big) \; \; : \; \; \text{supp}(u) \subseteq [-n, n] \; \; \text{for some} \; \; n \in \mathbb{N}  \Big\}
\end{equation*}
as in Example \ref{ex;dilhs}. Let $f : \mathbb{R} \rightarrow \mathbb{C}$ be a measurable function defined by $f(p) := p$ for all $p \in \mathbb{R}$. Corresponding to the function $f$, we define a locally bounded operator 
\begin{equation*}
T_f : \displaystyle \int^{\oplus_\text{loc}}_{\mathbb{R}} \mathcal{D}_p \, \dmu 
\rightarrow \displaystyle \int^{\oplus_\text{loc}}_{\mathbb{R}} \mathcal{D}_p \, \dmu \; \; \; \; \text{by} \; \; \; \; (T_fu)(p) = f(p)u(p) = pu(p)
\end{equation*}
for almost every $p \in \mathbb{R}$ and for every $u \in  \displaystyle \int^{\oplus_\text{loc}}_{\mathbb{R}} \mathcal{D}_p \, \dmu$. Then $T_f$ is a decomposable and moreover a diagonalizable operator on $\displaystyle \int^{\oplus_\text{loc}}_{\mathbb{R}} \mathcal{D}_p \, \dmu$.
\end{example}

Now we give an example of a decomposable locally bounded operator (defined on direct integral of locally Hilbert spaces) that is not diagonalizable .

\begin{example} \label{eg; Dec but not Diag 1}
Consider the locally standard measure space $\big ( \mathbb{N}, \Sigma, \mu \big )$ and direct integral of locally Hilbert spaces $\{ 
\mathcal{D}_n \}_{n \in \mathbb{N}}$ given by
\begin{equation*}
\int^{\oplus_\text{loc}}_\mathbb{N} \mathcal{D}_n \; \mathrm{d} \mu(n) = \bigoplus_{n \in \mathbb{N}} \mathcal{D}_n,
\end{equation*}
where $\mathcal{D}_1 = \bigcup\limits_{n \in \mathbb{N}} \text{span} \{ e_1, e_2, ..., e_n \} \subset \ell^2(\mathbb{N})$ \big (here $\{e_{n}: n \in \mathbb{N}\}$ is a Hilbert basis of $\ell^2(\mathbb{N})$ \big) and $\mathcal{D}_n = \{ 0 \}$ for $n \geq 2$. Now define a locally bounded operator $T : \bigoplus\limits_{n \in \mathbb{N}} \mathcal{D}_n \rightarrow \bigoplus\limits_{n \in \mathbb{N}} \mathcal{D}_n$ as 
\begin{equation*}
T\big ( \big \{ u(n) \big \}_{n \in \mathbb{N}}  \big ) = \big \{ T_nu(n) \big \}_{n \in \mathbb{N}},
\end{equation*}
where $T_1$ is as defined in Example \ref{ex;lbo} and $T_n = 0$ for $n \geq 2$. For instance, 
\begin{equation*} 
T \left ( \left \{ \sum^N_{k = 1} \lambda_k e_k, 0, 0, 0, 0, ... \right \} \right) = \left \{ \sum^N_{k = 1} k \lambda_k e_k, 0, 0, 0, 0, ... \right \}.
\end{equation*}
It shows that $T$ is a decomposable locally bounded operator on $\bigoplus\limits_{n \in \mathbb{N}} \mathcal{D}_n$.  Now we show that $T$ is not diagonalizable. By following (\ref{def;Diag(lbo)}) of Definition \ref{def;DecDiag(lbo)}, if there is a measurable function $f : \mathbb{N} \rightarrow \mathbb{C}$ satisfying $T\big ( \big \{ u(n) \big \}_{n \in \mathbb{N}}  \big ) = \big \{ f(n)u(n) \big \}_{n \in \mathbb{N}}$ for every $\big \{ u(n) \big \}_{n \in \mathbb{N}} \in \bigoplus\limits_{n \in \mathbb{N}} \mathcal{D}_n$, then for all $N \in \mathbb{N}$
\begin{equation} \label{eq; T is Dec not Diag}
\left \{ f(1) \left ( \sum^N_{k = 1}\lambda_k e_k \right ), 0, 0, 0, 0, ... \right \} = T \left( \left \{  \sum^N_{k = 1}\lambda_k e_k, 0, 0, 0, 0,... \right \} \right) = \left \{  \sum^N_{k = 1}k \lambda_k e_k, 0, 0, 0, 0, ... \right \}
\end{equation}
which is a contradiction. 

However, in view of Equation \eqref{eq; T is Dec not Diag}, we can see that the operator $T$ is diagonalizable if and only if the locally bounded operator $T_1$ (defined in Example \ref{ex;lbo}) is equal to $\lambda \cdot \mathrm{Id}_\mathcal{D}$  for some $\lambda \in \mathbb{C}$. 
\end{example}

We furnish the following example with the intention that such a decompoable, non diagonalizable locally bounded operator exists even when $\Lambda$ is an arbitrary (possibly uncountable) directed POSET.

\begin{example} \label{eg; Dec but not Diag 2}
Let $\Lambda = [0, \infty)$ and consider the locally standard measure space $\big (\mathbb{R}, B(\mathbb{R}), \mu \big )$ along with the family $\big \{ \big ( [-\alpha, \alpha], B \big ([-\alpha, \alpha] \big), \mu_\alpha \big ) \big \}_{\alpha \in [0, \infty)}$ of standard measure spaces. Corresponding to each $p \in \mathbb{R}$ and $\alpha \in [0, \infty)$ assign a Hilbert space $\mathcal{H}_{\alpha, p} = \text{L}^{2}(\mathbb{R}, \mu)$. Then for each $p \in \mathbb{R}$, we assign a locally Hilbert space $\mathcal{D}_{p} = \varinjlim\limits_{\alpha \in [0, \infty)} \mathcal{H}_{\alpha, p} = \text{L}^{2}(\mathbb{R}, \mu)$. Note that here each $\mathcal{D}_{p}$ is indeed a Hilbert space and for any $p, q \in \mathbb{R}$, we have $\mathcal{D}_{p} = \text{L}^{2}(\mathbb{R}, \mu) = \mathcal{D}_{q} $. Next, we consider the direct integral $\displaystyle \int^{\oplus_{\text{loc}}}_{\mathbb{R}}  \mathcal{D}_p \, \dmu$ which contains functions $u : \mathbb{R} \rightarrow \text{L}^{2}(\mathbb{R}, \mu)$ satisfying the properties listed in Definition \ref{Defn: directint_loc}. By using Equation \eqref{eq;dilhs=u}, we get $\displaystyle \int^{\oplus_{\text{loc}}}_{\mathbb{R}} \mathcal{D}_p \, \dmu = \bigcup\limits_{\alpha \in [0, \infty)} \mathcal{H}_\alpha$, where 
\begin{equation*}
\mathcal{H}_\alpha := \left \{ u \in \int^{\oplus_{\text{loc}}}_{\mathbb{R}} \mathcal{D}_p \, \dmu ~~ : ~~  \text{supp}(u) \subseteq [-\alpha, \alpha] \right \}.
\end{equation*}
Now define an operator $T : \displaystyle \int^{\oplus_{\text{loc}}}_{\mathbb{R}} \mathcal{D}_p \, \dmu \rightarrow \displaystyle \int^{\oplus_{\text{loc}}}_{\mathbb{R}} \mathcal{D}_p \, \dmu$ as 
\begin{equation*}
T \left ( u = \displaystyle \int^{\oplus_{\text{loc}}}_{\mathbb{R}} u(p) \, \dmu \right ) := \displaystyle \int^{\oplus_{\text{loc}}}_{\mathbb{R}} \hat{u}(p) \, \dmu,
\end{equation*}
where $\hat{u}(p) : \mathbb{R} \rightarrow \mathbb{C}$ is defined as $\hat{u}(p)(t) := u(p)(2t)$ for every $t \in \mathbb{R}$. Since $\text{supp}(u) = \text{supp}(Tu)$ for every $u \in \displaystyle \int^{\oplus_{\text{loc}}}_{\mathbb{R}} \mathcal{D}_p \, \dmu$ it follows that each $\mathcal{H}_\alpha$ is a reducing subspace for $T$. Hence $T$ is a locally bounded operator. Further, $T$ is decomposable. Because there is a family $\big \{ T_p \big \}_{p \in \mathbb{R}}$ of locally bounded operators (in fact, bounded linear operators) on $\mathcal{D}_p$ given by $T_p : \text{L}^{2}(\mathbb{R}, \mu) \rightarrow \text{L}^{2}(\mathbb{R}, \mu)$, where $T_p(f)(t) = f(2t)$ for every $f \in \text{L}^{2}(\mathbb{R}, \mu)$ and $t, p \in \mathbb{R}$ such that for each $u \in \displaystyle \int^{\oplus_{\text{loc}}}_{\mathbb{R}} \mathcal{D}_p \, \dmu$, we have
\begin{equation*}
(Tu)(p) = T_p(u(p))  \; \; \; \; \text{for almost every}  \; p \in \mathbb{R}.
\end{equation*}
Now we show that $T$ is not diagonalizable. Let $u : \mathbb{R} \rightarrow \text{L}^{2}(\mathbb{R}, \mu)$ be given by 
\begin{equation*}
u(p)(t) := \begin{cases}
1, & \text{if} \;\; t \in [-10, 10] \;,\; p \in [-1, 1];\\
0 & \text{otherwise}.
\end{cases}
\end{equation*}
Then $u \in \displaystyle \int^{\oplus_{\text{loc}}}_{\mathbb{R}} \mathcal{D}_p \, \dmu$. By following (\ref{def;Diag(lbo)}) of Definition \ref{def;DecDiag(lbo)}, suppose there exists a measurable function $f : \mathbb{R} \rightarrow \mathbb{C}$ such that 
\begin{equation} \label{Eq: Suppose T is diag}
(Tu)(p) = f(p)u(p),\; \text{for almost every}\; p \in \mathbb{R}. 
\end{equation}
By the definition of $T$, we get $Tu(p)(t) = 1$, whenever $p \in [-1, 1]$ and $t \in [-5, 5]$. From Equation \eqref{Eq: Suppose T is diag}, it follows that $f(p) = 1$ for almost every $p \in [-1, 1]$. On the other hand $Tu(p)(t) = 0$, whenever $p \in [-1, 1]$ and $t \in [-10, -5) \cup (5, 10]$. That is, $f(p) = 0$ for almost every $p \in [-1, 1]$ (from Equation \eqref{Eq: Suppose T is diag}). This is a contradiction. Therefore, $T$ is not diagonalizable.
\end{example}

Next, we present an example of a locally bounded operator which is not decomposable.

\begin{example} \label{eg; LBO but not Dec}
Let $\Lambda = \{ 1 \} \cup [2, \infty)$ and consider the locally standard measure space $\big (\mathbb{R}, B(\mathbb{R}), \mu \big )$ along with the family $\big \{ \big ( [-\alpha, \alpha], B \big ([-\alpha, \alpha] \big), \mu_\alpha \big ) \big \}_{\alpha \in \Lambda}$ of standard measure spaces. Corresponding to each $p \in \mathbb{R}$ and $\alpha \in \Lambda$ assign a Hilbert space $\mathcal{H}_{\alpha, p} = \mathbb{C}$. Then for each $p \in \mathbb{R}$, we assign a locally Hilbert space $\mathcal{D}_{p} = \varinjlim\limits_{\alpha \in \Lambda} \mathcal{H}_{\alpha, p} = \mathbb{C}$. Note that here each $\mathcal{D}_{p}$ is indeed a Hilbert space and for any $p, q \in \mathbb{R}$, we have $\mathcal{D}_{p} = \mathbb{C} = \mathcal{D}_{q} $. Then the direct integral 
\begin{equation*}
\displaystyle \int^{\oplus_{\text{loc}}}_{\mathbb{R}} \mathcal{D}_p \, \dmu = \big \{ u \in \text{L}^2 \big (\mathbb{R}, B \big (\mathbb{R} \big ), \mu \big ) \; : \; \text{supp}(u) \subseteq  [-\alpha, \alpha] \; \; \text{for some} \; \alpha \in \Lambda \big \},
\end{equation*}
which is dense in $\text{L}^2 \big (\mathbb{R}, B \big (\mathbb{R} \big ), \mu \big )$. In fact, $\displaystyle \int^{\oplus_{\text{loc}}}_{\mathbb{R}} \mathcal{D}_p \, \dmu = \bigcup\limits_{\alpha \in \Lambda} \mathcal{H}_{\alpha},$ where 
\begin{equation*}
    \mathcal{H}_{\alpha}:= \left\{u\in L^{2}(\mathbb{R}, B(\mathbb{R}), \mu);\; \text{supp}(u) \subseteq [-\alpha, \alpha] \right\}.
\end{equation*}Now we define  $T : \displaystyle \int^{\oplus_{\text{loc}}}_{\mathbb{R}} \mathcal{D}_p \, \dmu \rightarrow \displaystyle \int^{\oplus_{\text{loc}}}_{\mathbb{R}} \mathcal{D}_p \, \dmu$ by 
\begin{equation*}
T \left ( u = \displaystyle \int^{\oplus_{\text{loc}}}_{\mathbb{R}} u(p) \, \dmu \right ) := \int^{\oplus_{\text{loc}}}_{\mathbb{R}} \up{\chi}_{[\frac{-1}{2}, \frac{1}{2}]}(p)\; u(2p) \, \dmu 
\end{equation*}
It is immediate to see that $\text{supp}(Tu) \subseteq [\frac{-1}{2}, \frac{1}{2}]$ for every $u \in \displaystyle \int^{\oplus_{\text{loc}}}_{\mathbb{R}} \mathcal{D}_p \, \dmu$. Precisely, for every $\alpha \in \Lambda$,  $T(\mathcal{H}_{\alpha}) \subseteq \mathcal{H}_{1}$ and $T^{\ast}(\mathcal{H}_{\alpha}) \subseteq \mathcal{H}_{1}$ and so, $T$ is a locally bounded operator. Now we show that $T$ is not decomposable. 
Let $u, v$ be defined on $\mathbb{R}$ as follows
\begin{equation*}
u(p) := \begin{cases}
p, & \text{if} \;\; p \in [\frac{-1}{2}, \frac{1}{2}]; \\
0 & \text{otherwise}
\end{cases} \; \; \; \; \; \; 
v(p) := \begin{cases}
p, & \text{if} \;\; p \in [\frac{-1}{4}, \frac{1}{4}]; \\
0 & \text{otherwise}.
\end{cases}
\end{equation*}
Then $u, v \in \displaystyle \int^{\oplus_{\text{loc}}}_{\mathbb{R}} \mathcal{D}_p \, \dmu$. By the definition of $T$, we have 
\begin{equation*}
(Tu)(p) := \begin{cases}
2p, & \text{if} \;\; p \in [\frac{-1}{4}, \frac{1}{4}]; \\
0 & \text{otherwise}.
\end{cases} \; \; \; \; \; \; 
(Tv)(p) := \begin{cases}
2p, & \text{if} \;\; p \in [\frac{-1}{8}, \frac{1}{8}]; \\
0 & \text{otherwise}.
\end{cases}
\end{equation*}  
Suppose by (\ref{def;Dec(lbo)}) of Definition \ref{def;DecDiag(lbo)}, there exists a family $\big \{ T_p : \mathbb{C} \rightarrow \mathbb{C} \big \}_{p \in \mathbb{R}}$ of locally bounded (in fact, bounded) operators such that 
\begin{equation*}
(Tu)(p) = T_pu(p) \; \; \; \text{and} \; \; \; (Tv)(p) = T_pv(p)
\end{equation*}
for almost every $p \in \mathbb{R}$. Since the family $\big \{ T_p \big \}_{p \in \mathbb{R}}$ consists of bounded operators on $\mathbb{C}$, we can replace $T_p$ by $c_p$ for some $c_p \in \mathbb{C}$. Then for almost every $p \in [\frac{-1}{4}, \frac{-1}{8}] \cup [\frac{1}{8}, \frac{1}{4}]$, we get
\begin{equation*}
T_pu(p) = c_p p = (Tu)(p) = 2p,
\end{equation*}
that is $c_p = 2$. Whereas, for almost every $p \in [\frac{-1}{4}, \frac{-1}{8}] \cup [\frac{1}{8}, \frac{1}{4}]$, we obtain
\begin{equation*}
T_pv(p) = c_p p = (Tv)(p) = 0,
\end{equation*}
that is $c_p = 0$. This is a contradiction. Therefore, $T \notin \mathcal{M}_{\text{DEC}}$. Further, consider a measurable function $f : \mathbb{R} \rightarrow \mathbb{C}$ defined by $f(p) := p$ for all $p \in \mathbb{R}$ and define a digonalizable operator $S_f : \displaystyle \int^{\oplus_{\text{loc}}}_{\mathbb{R}} \mathcal{D}_p \, \dmu \rightarrow \displaystyle \int^{\oplus_{\text{loc}}}_{\mathbb{R}} \mathcal{D}_p \, \dmu$ corresponding to $f$. Then for almost every $p \in  [\frac{-1}{4}, \frac{1}{4}]$, we get
\begin{equation*}
TS_fu(p) = T(f(p)u(p)) = T(p^2) = 4p^2
\end{equation*}
while
\begin{equation*}
S_fTu(p) = S_fT(2p) = 2p^2.
\end{equation*}
This shows that $TS_f \neq S_fT$ and hence $T \notin \big (\mathcal{M}_{\text{DIAG}} \big )^\prime$.
\end{example}

\subsection{Observations II} The following key observations are useful to understand the notion of decomposable and diagonalizable locally bounded operators on $\displaystyle \dilX \mathcal{D}_p \, \dmu$.

\begin{enumerate} 
\item Let $T$ be a locally bounded operator on $\displaystyle \dilX \mathcal{D}_p \, \dmu$. If $T \in \mathcal{M}_\text{DEC}$, then unique up to a measure zero set there exists a family $\big \{T_p \big \}_{p \in X}$, where $T_p : \mathcal{D}_p \rightarrow \mathcal{D}_p$ is a locally bounded operator such that $T = \displaystyle \dilX T_p \, \dmu$.
\begin{proof}
Suppose $\{T_p\}_{p \in X}$ and $\{T^\prime_p\}_{p \in X}$ are two distinct families such that $\displaystyle \dilX T_p \, \dmu = T = \displaystyle \dilX T^\prime_p \, \dmu$. Then from point (3) of Definition \ref{Defn: directint_loc}, we get $T_p = T^\prime_p$ for almost every $p \in X$.
\end{proof}

\item \label{obs;M DEC and M DIAG are star algebras}
We know from Equation \eqref{eq;il=u} that $\displaystyle \dilX \mathcal{D}_p \, \dmu = \bigcup\limits_{\alpha \in \Lambda} \mathcal{H}_\alpha$. Thus the locally Hilbert space $\displaystyle \dilX \mathcal{D}_p \, \dmu$ is given by the strictly inductive system $\mathcal{E} = \{ \mathcal{H}_\alpha \}_{\alpha \in \Lambda}$ of Hilbert spaces. Recall that the collection of all locally bounded operators on $\displaystyle \dilX \mathcal{D}_p \, \dmu$ is denoted by $C^\ast_\mathcal{E} \left(\displaystyle \dilX \mathcal{D}_p \, \dmu \right)$, and from Example \ref{ex;lca}, we know that $C^\ast_\mathcal{E}\left(\displaystyle \dilX \mathcal{D}_p \, \dmu \right)$ is a locally $C^\ast$-algebra. Further, we have $\mathcal{M}_\text{DIAG} \subseteq \mathcal{M}_\text{DEC} \subseteq C^\ast_\mathcal{E}\left(\displaystyle \dilX \mathcal{D}_p \, \dmu \right)$. Also we get $\mathcal{M}_\text{DEC}$ is a locally convex $\ast$-subalgebra of $C^\ast_\mathcal{E}\left(\displaystyle \dilX \mathcal{D}_p \, \dmu \right)$ with respect to the following operations
\begin{multicols}{2}
\begin{enumerate}
\item $T + S = \displaystyle \dilX T_p + S_p \, \dmu$\\
\item $\lambda \cdot T = \displaystyle \dilX \lambda \cdot T_p\, \dmu$
\item $T \cdot S = \displaystyle \dilX T_p \cdot S_p \, \dmu$\\
\item $T^\ast = \displaystyle \dilX T^\ast_p \, \dmu$,
\end{enumerate}
\end{multicols}

\noindent
for every $T = \displaystyle \dilX T_p \, \dmu$ and $S = \displaystyle \dilX S_p \, \dmu$ in $\mathcal{M}_\text{DEC}$ and $\lambda \in \mathbb{C}$. 
Similarly, with respect to the above operations $\mathcal{M}_\text{DIAG}$ also forms a locally convex $\ast$-subalgebra of $C^\ast_\mathcal{E}\left(\displaystyle \dilX \mathcal{D}_p \, \dmu \right)$.

\item \label{obs; description of V alpha T V alpha star}
Supose $T \in \mathcal{M}_\text{DEC}$, then there is a family $\{T_p\}_{p \in X}$ such that $T = \displaystyle \dilX T_p \, \dmu$. We have $\displaystyle \dilX \mathcal{D}_p \, \dmu = \bigcup\limits_{\alpha \in \Lambda} \mathcal{H}_\alpha$ (see Equation \eqref{eq;dilhs=u}),
where 
\begin{equation*}
\mathcal{H}_\alpha := \left \{ u \in \dilX \mathcal{D}_p \, \dmu ~~ : ~~  \text{supp}(u) \subseteq X_\alpha, \; \;  u(p) \in \mathcal{H}_{\alpha, p} \; \; \text{for almost every} \; \; p \in X_\alpha \right \}.
\end{equation*}
\noindent
Now we consider for each $\alpha \in \Lambda$ the isomorphism $V_\alpha : \mathcal{H}_\alpha \rightarrow \int^\oplus_{X_\alpha} \mathcal{H}_{\alpha, p} \, \mathrm{d} \mu_\alpha (p)$ given by $V_\alpha(u)(p) := u(p)$ for all $p \in X_\alpha$ and $u \in \mathcal{H}_\alpha$ \big (refer Equation \eqref{eq;iso} \big ). Fix $\alpha \in \Lambda$ and  $V_\alpha (u) \in \int^\oplus_{X_\alpha} \mathcal{H}_{\alpha, p} \, \mathrm{d} \mu_\alpha (p)$ for some $u \in \mathcal{H}_\alpha$, then 
\begin{equation*}
V_\alpha T V^\ast_\alpha (V_\alpha(u))(p) =  V_\alpha (Tu)(p) = (Tu)(p) = T_p u(p) = T_p \big |_{\mathcal{H}_{\alpha, p}} u(p),
\end{equation*}
for almost every $p \in X_\alpha$. In other words, $V_\alpha T V_\alpha^\ast$ is a decompoable bounded operator on $\int^\oplus_{X_\alpha} \mathcal{H}_{\alpha, p} \, \mathrm{d} \mu_\alpha (p)$, that is,
\begin{equation} \label{eq;restriction of DecLBO}
V_\alpha T V_\alpha^\ast = \int^\oplus_{X_\alpha} T_{p} \big|_{\mathcal{H}_{\alpha, p}} \,  \mathrm{d} \mu_\alpha (p) \; \; \; \text{for every} \; \; \alpha \in \Lambda.
\end{equation}
Moreover, for each $\alpha \in \Lambda$, we obtain $\| V_\alpha T V_\alpha^\ast \| = \text{ess} \sup\limits_{p \in X_\alpha} \big \{ \big \| T_{p}\big|_{\mathcal{H}_{\alpha, p}} \big\| \big \} < \infty$.

In particular, if $T \in \mathcal{M}_\text{DIAG}$ with 
$T = \displaystyle \dilX f(p) \cdot \mathrm{Id}_{\mathcal{D}_{p}} \, \dmu$, for some measurable function $f : X \rightarrow \mathbb{C}$, then for each $\alpha \in \Lambda$, we have 
\begin{equation} \label{eq;restriction of DiagLBO}
V_\alpha T V_\alpha^\ast = \int^\oplus_{X_\alpha} f(p) \cdot \mathrm{Id}_{\mathcal{H}_{\alpha, p}} \,  \mathrm{d} \mu_\alpha (p),
\end{equation}
where $f \big|_{X_\alpha} \in \text{L}^\infty \big (X_\alpha, \mu_\alpha \big )$. As a result
$V_\alpha T V_\alpha^\ast$ is a diagonalizable bounded linear operator for each $\alpha \in \Lambda$.

\item If $T \in \mathcal{M}_\text{DIAG}$, then
$T = \displaystyle \dilX f(p) \cdot \mathrm{Id}_{\mathcal{D}_p}  \, \dmu$, for some measurable function $f : X \rightarrow \mathbb{C}$ and from the previous observation, we get that for each $\alpha \in \Lambda$, the bounded linear operator $V_\alpha T V_\alpha^\ast$ on $\int^\oplus_{X_\alpha} \mathcal{H}_{\alpha, p} \, \mathrm{d} \mu_\alpha$ is diagonalizable. By following (\ref{def;Diagbo}) of Definition \ref{def;Debo}, corresponding to each $V_\alpha T V_\alpha^\ast$ there is a function $f_\alpha \in \text{L}^\infty \big (X_\alpha, \mu_\alpha \big )$. As a result, to define diagonalizable locally bounded operator, one may think of considering the family of measurable measurable functions $ \big \{ f_\alpha  \in \text{L}^\infty \big (X_\alpha, \mu_\alpha \big ) \big \}_{\alpha \in \Lambda}$ such that
\begin{equation*}
(Tu)(p) = f_\alpha(p)u(p)  \; \; \text{for almost every} \; \; p \in X_\alpha
\end{equation*}
and for every $u \in \mathcal{H}_\alpha$. In that case, by using the fact that $\mathcal{H}_\alpha \subseteq \mathcal{H}_\beta$ (whenever $\alpha \leq \beta)$, we see that  
\begin{equation*}
f_\alpha(p)u(p) = (Tu)(p)  = f_\beta(p)u(p)
\end{equation*}
for almost every $p \in X_\alpha$ and for every $u \in \mathcal{H}_\alpha$. Thus, $f_\alpha(p) = f_\beta(p)$ for almost every $p \in X_\alpha$. On the other hand, if $\alpha, \beta \in \Lambda$ are not comparable, then there exists $\gamma \in \Lambda$ such that $\alpha \leq \gamma$ and $\beta \leq \gamma$. For any $u \in \mathcal{H}_\alpha \subseteq \mathcal{H}_\gamma$ and $v \in \mathcal{H}_\beta \subseteq \mathcal{H}_\gamma$, we have
\begin{equation*} 
f_\alpha(p)u(p) = (Tu)(p)  = f_\gamma(p)u(p) \; \; \; \; \text{and} \; \; \; \; f_\beta(q)v(q) = (Tv)(q)  = f_\gamma(q)v(q)
\end{equation*}
for almost every $p \in X_\alpha$ and for almost every $q \in X_\beta$. Consequently, we get $f_\alpha(p) = f_\gamma(p) = f_\beta(p)$ for almost every $p \in X_\alpha \cap X_\beta \cap X_\gamma$. 

Therefore, this shows that for any $\alpha, \beta \in \Lambda$, if $X_\alpha \cap X_\beta \neq \emptyset$, then $f_\alpha(p) = f_\beta(p)$ for almost every $p \in X_\alpha \cap X_\beta$. In view of this, by defining $f : X \rightarrow \mathbb{C}$ by $f(p) := f_\alpha(p)$, whenever $p \in X_\alpha$, we get $f$ to be measurable such that $f \big |_{X_\alpha} \in  \text{L}^\infty \big (X_\alpha, \mu_\alpha \big )$ for every $\alpha \in \Lambda$ and $(Tu)(p) = f(p) u(p)$ for almost every $p \in X$. Therefore, considering such family $ \big \{ f_\alpha  \in \text{L}^\infty \big (X_\alpha, \mu_\alpha \big ) \big \}_{\alpha \in \Lambda}$  is equivalent to saying that there is a measurable function $f : X \rightarrow \mathbb{C}$ as defined in (2) of Definition \ref{def;DecDiag(lbo)}.
\end{enumerate}

Next, we prove that $\mathcal{M}_{\text{DEC}}$ and $\mathcal{M}_{\text{DIAG}}$ are locally von Neumann algebras.

\begin{theorem} \label{thm;DEC and DIAG LvNA}
Let $\big( X, \Sigma, \mu \big) $ be a locally standard measure space and for each $p \in X$ assign a locally Hilbert space $\mathcal{D}_p$, where $\mathcal{D}_p = \varinjlim\limits_{\alpha \in \Lambda} \mathcal{H}_{\alpha, p}$. Then 
\begin{enumerate}
\item[(a)] $\mathcal{M}_{\text{DEC}}$ is a locally von Neumann algebra.
\item[(b)] $\mathcal{M}_{\text{DIAG}}$ is an abelian locally von Neumann algebra.
\end{enumerate}
\end{theorem}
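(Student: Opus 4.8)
The plan is to verify directly the two defining requirements of a locally von Neumann algebra (see \cite[Definition 3.7]{MJ1}): that $\mathcal{M}_{\text{DEC}}$ is a unital locally $C^\ast$-algebra and that it is strongly closed in $C^\ast_\mathcal{E}\big(\dilX \mathcal{D}_p \, \dmu\big)$. The first half of Observations II (see \ref{obs;M DEC and M DIAG are star algebras}) already records that $\mathcal{M}_{\text{DEC}}$ is a unital locally convex $\ast$-subalgebra, so the essential content is strong closedness; once that is established, the locally $C^\ast$-algebra structure comes for free. Indeed, the estimate $q_u(T) = \|T|_{\mathcal{H}_\alpha}u\| \le \|T|_{\mathcal{H}_\alpha}\|\,\|u\|$ for $u \in \mathcal{H}_\alpha$ shows that the strong topology is coarser than the projective limit (uniform) topology of $C^\ast_\mathcal{E}\big(\dilX \mathcal{D}_p \, \dmu\big) = \varprojlim_\alpha \mathcal{B}(\mathcal{H}_\alpha)$; hence a strongly closed linear subspace is also closed in the uniform topology, and being a closed subspace of a complete space it is complete with respect to the $C^\ast$-seminorms $T \mapsto \|T|_{\mathcal{H}_\alpha}\|$, i.e. a locally $C^\ast$-algebra. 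Throughout I write $\mathbb{H}_\alpha := \int^\oplus_{X_\alpha} \mathcal{H}_{\alpha,p}\, \mathrm{d}\mu_\alpha(p)$ and use the isomorphism $V_\alpha : \mathcal{H}_\alpha \to \mathbb{H}_\alpha$ of Equation \eqref{eq;iso}, together with the fact (Theorem \ref{thm;DeDibo vNA}) that the decomposable operators $\mathcal{N}^{\text{DEC}}_\alpha$ and the diagonalizable operators $\mathcal{N}^{\text{DIAG}}_\alpha$ on $\mathbb{H}_\alpha$ are von Neumann algebras, the latter abelian and equal to $(\mathcal{N}^{\text{DEC}}_\alpha)'$.

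For strong closedness of $\mathcal{M}_{\text{DEC}}$, I would take a net $\{T^{(i)}\}$ in $\mathcal{M}_{\text{DEC}}$ converging strongly to some $T \in C^\ast_\mathcal{E}\big(\dilX \mathcal{D}_p \, \dmu\big)$ and show $T \in \mathcal{M}_{\text{DEC}}$. Fixing $\alpha$ and conjugating by the unitary $V_\alpha$, strong convergence descends to $V_\alpha T^{(i)} V_\alpha^\ast \to V_\alpha T V_\alpha^\ast$ strongly on $\mathbb{H}_\alpha$, since $\|V_\alpha(T^{(i)}-T)V_\alpha^\ast \xi\| = \|(T^{(i)}-T)V_\alpha^\ast\xi\|$ for every $\xi \in \mathbb{H}_\alpha$. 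By Equation \eqref{eq;restriction of DecLBO} each $V_\alpha T^{(i)} V_\alpha^\ast$ lies in the von Neumann algebra $\mathcal{N}^{\text{DEC}}_\alpha$, which is strongly closed, so the limit $V_\alpha T V_\alpha^\ast = \int^\oplus_{X_\alpha} S_{\alpha,p}\,\mathrm{d}\mu_\alpha(p)$ is decomposable for some measurable field $\{S_{\alpha,p} \in \mathcal{B}(\mathcal{H}_{\alpha,p})\}$. This provides, for every $\alpha \in \Lambda$, a fibered description of $T$ at level $\alpha$.

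It then remains to glue these level-wise fields into a single family $\{T_p : \mathcal{D}_p \to \mathcal{D}_p\}_{p \in X}$ of locally bounded operators with $T = \dilX T_p \, \dmu$. For $\alpha \le \beta$ the subspace $\mathcal{H}_\alpha \subseteq \mathcal{H}_\beta$ reduces $T|_{\mathcal{H}_\beta}$ (as $T$ is locally bounded, Definition \ref{def;lbo}, and by the compatibility in Remark \ref{rem;lbo}), so uniqueness of the disintegration (the first observation in Observations II) forces $S_{\beta,p}\big|_{\mathcal{H}_{\alpha,p}} = S_{\alpha,p}$ for almost every $p \in X_\alpha$. Setting $T_p\xi := S_{\alpha,p}\xi$ for $\xi \in \mathcal{H}_{\alpha,p}$ (with $p \in X_\alpha$) then produces, off a null set, a locally bounded operator on $\mathcal{D}_p = \bigcup_\alpha \mathcal{H}_{\alpha,p}$ with $(Tu)(p) = T_p u(p)$ a.e. for each $u$, whence $T \in \mathcal{M}_{\text{DEC}}$. \textbf{I expect this gluing step to be the main obstacle}: the fields $\{S_{\alpha,p}\}$ are defined only up to $\mu_\alpha$-null sets, so the coherence relations must be organized so that a single family $\{T_p\}$ is well defined almost everywhere, a bookkeeping that is delicate when $\Lambda$ is uncountable and is exactly where uniqueness of disintegration and the reducing-subspace relation of Remark \ref{rem;lbo} must be used with care.

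For part (b) the same argument applies verbatim with $\mathcal{N}^{\text{DEC}}_\alpha$ replaced by the abelian von Neumann algebra $\mathcal{N}^{\text{DIAG}}_\alpha$ and Equation \eqref{eq;restriction of DecLBO} by Equation \eqref{eq;restriction of DiagLBO}; here the level-wise data are scalar functions $f_\alpha \in \text{L}^\infty(X_\alpha,\mu_\alpha)$, and the gluing is the clean scalar-valued one already carried out in the last observation of Observations II, producing a measurable $f : X \to \mathbb{C}$ with $T = \dilX f(p)\cdot \mathrm{Id}_{\mathcal{D}_p}\,\dmu$. Finally, commutativity of $\mathcal{M}_{\text{DIAG}}$ is immediate and independent of the closedness argument: for $T = \dilX f(p)\cdot\mathrm{Id}_{\mathcal{D}_p}\,\dmu$ and $S = \dilX g(p)\cdot\mathrm{Id}_{\mathcal{D}_p}\,\dmu$ the fibers $f(p)g(p)\,\mathrm{Id}_{\mathcal{D}_p}$ and $g(p)f(p)\,\mathrm{Id}_{\mathcal{D}_p}$ coincide, so $TS = ST$ by uniqueness of the decomposition. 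One may also package the whole argument by noting that $\mathcal{M}_{\text{DEC}}$ and $\mathcal{M}_{\text{DIAG}}$ are realized as the projective limits $\varprojlim_\alpha \mathcal{N}^{\text{DEC}}_\alpha$ and $\varprojlim_\alpha \mathcal{N}^{\text{DIAG}}_\alpha$ of von Neumann algebras, which are locally $C^\ast$-algebras by Remark \ref{rem;pl,sh}, and then invoke the double commutant theorem \cite[Theorem 3.6]{MJ1} to pass between strong closedness and the locally von Neumann property.
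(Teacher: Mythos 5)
Your route is genuinely different from the paper's, and it contains a gap that is more than the ``delicate bookkeeping'' you flag. The paper does not prove strong closedness of $\mathcal{M}_{\text{DEC}}$ directly at all: it exhibits the von Neumann algebras $\int^\oplus_{X_\alpha}\mathcal{B}(\mathcal{H}_{\alpha,p})\,\mathrm{d}\mu_\alpha$ together with the normal surjective connecting maps $\phi_{\alpha,\beta}$ of Equation \eqref{eq; phi alpha beta} as a projective system, checks that $(\mathcal{M}_{\text{DEC}},\{\phi_\alpha\})$ with $\phi_\alpha(T)=V_\alpha TV_\alpha^\ast$ is compatible with it, identifies $\mathcal{M}_{\text{DEC}}$ with the projective limit, and invokes \cite[Section 1]{MF}; part (b) is handled the same way with the abelian algebras $\int^\oplus_{X_\alpha}\mathbb{C}\cdot\mathrm{Id}_{\mathcal{H}_{\alpha,p}}\,\mathrm{d}\mu_\alpha$. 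Your closing paragraph gestures at exactly this packaging, but you present it as an optional reformulation of a completed direct argument, whereas in the paper it is the argument, precisely because the direct route runs into the obstruction below.

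The gap is the gluing step in your strong-closedness argument for part (a). From $T^{(i)}\to T$ strongly you correctly obtain that $V_\alpha TV_\alpha^\ast$ is decomposable on $\int^\oplus_{X_\alpha}\mathcal{H}_{\alpha,p}\,\mathrm{d}\mu_\alpha$ for every $\alpha$, with coherent fields $S_{\beta,p}\big|_{\mathcal{H}_{\alpha,p}}=S_{\alpha,p}$ for $\mu_\alpha$-almost every $p$. But this level-wise property is exactly the characterization of membership in $(\mathcal{M}_{\text{DIAG}})^\prime$ established in the proof of Theorem \ref{thm; relation between M DEC and M DIAG}; passing from it to a single family $\{T_p\}_{p\in X}$ defined off one common null set is precisely the content of Theorem \ref{thm; M DEC = M DIAG Commutant}, which the paper can carry out only when $\Lambda$ is countable or $\mu$ is counting measure, and the paper explicitly states that the equality $\mathcal{M}_{\text{DEC}}=(\mathcal{M}_{\text{DIAG}})^\prime$ is open in general (see (3) of Observations III and Remark \ref{rem; inverse limits of decomposable vNA}). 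For uncountable $\Lambda$ the exceptional sets $E_{\alpha,\beta}$ need not have a null union, so your argument as written only places the strong limit $T$ in $(\mathcal{M}_{\text{DIAG}})^\prime$; concluding $T\in\mathcal{M}_{\text{DEC}}$ would resolve the paper's open question. The same caveat applies, in milder form, to the scalar gluing you invoke for part (b) when $\Lambda$ is uncountable. Your observation that commutativity of $\mathcal{M}_{\text{DIAG}}$ follows fibrewise is correct and consistent with the paper.
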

\begin{proof}
Given that $\{ \mathcal{H}_{\alpha, p} \}_{\alpha \in \Lambda}$ is a strictly inductive system of Hilbert spaces for each $p \in X$. We denote the inclusion map $J_{\beta, \alpha, p} : \mathcal{H}_{\alpha, p} \rightarrow \mathcal{H}_{\beta, p}$, whenever $\alpha \leq \beta$ and $p \in X$. For a fixed $\alpha \in \Lambda$, let us consider
the Hilbert space $\int^\oplus_{X_\alpha} \mathcal{H}_{\alpha, p} \, \mathrm{d} \mu_\alpha$, the direct integral of Hilbert spaces $\{ \mathcal{H}_{\alpha, p} \}_{p \in X_\alpha}$. We denote the collection of all decomposable bounded linear operators and the collection of all diagonalizable bounded linear operators on $\int^\oplus_{X_\alpha} \mathcal{H}_{\alpha, p} \, \mathrm{d} \mu_\alpha$ by $\int^\oplus_{X_\alpha} \mathcal{B}(\mathcal{H}_{\alpha, p}) \, \mathrm{d} \mu_\alpha$ and $\int^\oplus_{X_\alpha} \mathbb{C} \cdot \mathrm{Id}_{\mathcal{H}_{\alpha, p}}   \, \mathrm{d} \mu_\alpha$ respectively. Then by Theorem 
\ref{thm;DeDibo vNA}, $\int^\oplus_{X_\alpha} \mathcal{B}(\mathcal{H}_{\alpha, p}) \, \mathrm{d} \mu_\alpha$ is a von Neumann algebra and $\int^\oplus_{X_\alpha} \mathbb{C} \cdot \mathrm{Id}_{\mathcal{H}_{\alpha ,p}}   \, \mathrm{d} \mu_\alpha$ is an abelian von Neumann algebra. We use these facts to prove our assertions.

Proof of (a): Firstly, note that for a fixed $p \in X$ and $T_{\beta, p} \in \mathcal{B}(\mathcal{H}_{\beta, p})$, the Hilbert space $\mathcal{H}_{\alpha, p}$ is not necessarily an invariant subspace for $T_{\beta, p}$  whenever $\alpha \leq \beta$. In view of this, we use the inclusion map $J_{\beta, \alpha, p}$ and define the map $\phi_{\alpha, \beta} : \int^\oplus_{X_\beta} \mathcal{B}(\mathcal{H}_{\beta, p}) \, \mathrm{d} \mu_\beta \rightarrow \int^\oplus_{X_\alpha} \mathcal{B}(\mathcal{H}_{\alpha, p}) \, \mathrm{d} \mu_\alpha$ by 
\begin{equation} \label{eq; phi alpha beta}
\phi_{\alpha, \beta} \left (\int^\oplus_{X_\beta} T_{\beta, p} \, \mathrm{d} \mu_\beta \right ) := \int^\oplus_{X_\alpha} J^\ast_{\beta, \alpha, p} T_{\beta, p} J_{\beta, \alpha, p} \, \mathrm{d} \mu_\alpha,
\end{equation}
whenever $\alpha \leq \beta$. Clearly, $\phi_{\alpha, \beta}$ is a surjective $\ast$-homomorphism. In particular $\phi_{\alpha, \alpha}$ is the identity map. Now we show that, $\phi_{\alpha, \beta}$ is a normal map between von Neumann algebras. To see this, let us consider a sequence $\left \{ \int^\oplus_{X_\beta} T^n_{\beta, p} \, \mathrm{d} \mu_\beta  \right \}_{n \in \mathbb{N}}$ in $\int^\oplus_{X_\beta} \mathcal{B}(\mathcal{H}_{\beta, p}) \, \mathrm{d} \mu_\beta$ such that $\int^\oplus_{X_\beta} T^n_{\beta, p} \, \mathrm{d} \mu_\beta \longrightarrow \int^\oplus_{X_\beta} T_{\beta, p} \, \mathrm{d} \mu_\beta$ in the weak operator topology. For any $x, y \in \int^\oplus_{X_\alpha} \mathcal{H}_{\alpha, p} \, \mathrm{d} \mu_\alpha$, define
\[ \widetilde{x}(p):= \left\{ \begin{array}{cc} x(p) & \mbox{if}\; p \in X_{\alpha};\\
& \\
0_{\mathcal{H}_{\beta, p}} & \mbox{if}\; p \in X_{\beta}\setminus X_{\alpha}\end{array}\right.
\; \; \text{and} \; \;
\widetilde{y}(p):= \left\{ \begin{array}{cc} y(p) & \text{if}\; p \in X_{\alpha};\\
&\\
0_{\mathcal{H}_{\beta, p}} & \text{if}\; p \in X_{\beta}\setminus X_{\alpha},\end{array}\right.
\]
then $\widetilde{x}, \widetilde{y} \in \int^\oplus_{X_\beta} \mathcal{H}_{\beta, p} \, \mathrm{d} \mu_\beta$ and we have 

\begin{align*}
\left\langle  x, \; \left ( \int^\oplus_{X_\alpha} J^{\ast}_{\beta, \alpha, p} \big(T^n_{\beta, p} - T_{\beta, p} \big) J_{\beta, \alpha, p} \, \mathrm{d} \mu_{\alpha} \right ) (y) \right\rangle &= \int_{X_\alpha} \la J_{\beta, \alpha, p}x(p), \; \big (T^n_{\beta, p} - T_{\beta, p} \big ) J_{\beta, \alpha, p} y(p) \ra_{\mathcal{H}_{\alpha, p}} \mathrm{d} \mu_\alpha(p) \\
&= \int_{X_\beta} \la \widetilde{x}(p), \; \big (T^n_{\beta, p} - T_{\beta, p} \big ) \widetilde{y}(p) \ra_{\mathcal{H}_{\beta, p}} \mathrm{d} \mu_\beta(p) \\
&= \left \langle \widetilde{x}, \; \left( \int^{\oplus}_{X_\beta}  \big (T^n_{\beta, p} - T_{\beta, p} \big) \; \mathrm{d} \mu_\beta(p) \right) \widetilde{y} \right \rangle\\
&\longrightarrow 0,
\end{align*}
as $n \to \infty.$ Thus $\phi_{\alpha, \beta}$ is a normal surjective $\ast$-homomorphism. Let $\alpha \leq \beta \leq \gamma$ and for any $\int^\oplus_{X_\gamma} T_{\gamma, p} \, \mathrm{d} \mu_\gamma \in \int^\oplus_{X_\gamma} \mathcal{B}(\mathcal{H}_{\gamma, p}) \, \mathrm{d} \mu_\gamma$, we get 
\begin{align*}
\phi_{\alpha, \beta} \left ( \phi_{\beta, \gamma} \left (\int^\oplus_{X_\gamma} T_{\gamma, p} \, \mathrm{d} \mu_\gamma \right ) \right ) &= \phi_{\alpha, \beta} \left ( \int^\oplus_{X_\beta} J^\ast_{\gamma, \beta, p} T_{\gamma, p} J_{\gamma, \beta, p} \, \mathrm{d} \mu_\beta \right ) \\
&= \int^\oplus_{X_\alpha} J^\ast_{\beta, \alpha, p} J^\ast_{\gamma, \beta, p} T_{\gamma, p} J_{\gamma, \beta, p} J_{\beta, \alpha, p} \, \mathrm{d} \mu_\alpha \\
&= \int^\oplus_{X_\alpha} J^\ast_{\gamma, \alpha, p} T_{\gamma, p} J_{\gamma, \alpha, p} \, \mathrm{d} \mu_\alpha \\
&= \phi_{\alpha, \gamma} \left (\int^\oplus_{X_\gamma} T_{\gamma, p} \, \mathrm{d} \mu_\gamma \right ).
\end{align*}
This implies $\left (  \left \{ \int^\oplus_{X_\alpha} \mathcal{B}(\mathcal{H}_{\alpha, p}) \, \mathrm{d} \mu_\alpha \right \}_{\alpha \in \Lambda}, \{ \phi_{\alpha, \beta} \}_{\alpha \leq \beta} \right )$ is a projective system of von Neumann algebras. We know from (\ref{obs;M DEC and M DIAG are star algebras}) of Observation II that $\mathcal{M}_{\text{DEC}}$ is a locally convex $\ast$-algebra. Since, for each $\alpha \in \Lambda$, there is a unitary operator $V_\alpha : \mathcal{H}_\alpha \rightarrow \int^\oplus_{X_\alpha} \mathcal{H}_{\alpha, p} \, \mathrm{d} \mu_\alpha$ given in Equation \eqref{eq;iso} and from Equation \eqref{eq;restriction of DecLBO}, it follows that the map $\phi_\alpha : \mathcal{M}_{\text{DEC}} \rightarrow  \int^\oplus_{X_\alpha} \mathcal{B}(\mathcal{H}_{\alpha, p}) \, \mathrm{d} \mu_\alpha$ defined by 
\begin{equation} \label{eq; phi alpha}
\phi_\alpha \left ( \displaystyle \dilX T_p \, \dmu \right ) := V_\alpha \left ( \displaystyle \dilX T_p \, \dmu \right) V_\alpha^\ast = \int^\oplus_{X_\alpha} T_{p} \big|_{\mathcal{H}_{\alpha, p}} \,  \mathrm{d} \mu_\alpha
\end{equation}
is a continuous $\ast$-homomorphism satisfying
\begin{align*}
\phi_{\alpha, \beta} \circ \phi_\beta \left ( \displaystyle \dilX T_p \, \dmu \right ) &= \phi_{\alpha, \beta} \left ( \int^\oplus_{X_\beta} T_{p} \big|_{\mathcal{H}_{\beta, p}} \,  \mathrm{d} \mu_\beta \right ) \\
&= \int^\oplus_{X_\alpha} J^\ast_{\beta, \alpha, p} T_p \big|_{\mathcal{H}_{\beta, p}} J_{\beta, \alpha, p}  \, \mathrm{d} \mu_\alpha \\
&= \int^\oplus_{X_\alpha} T_{p} \big|_{\mathcal{H}_{\alpha, p}} \,  \mathrm{d} \mu_\alpha \\
&= \phi_\alpha \left ( \displaystyle \dilX T_p \, \dmu \right ),
\end{align*}
whenever $\alpha \leq \beta$. This shows that the pair $ \big (\mathcal{M}_{\text{DEC}}, \{ \phi_\alpha \}_{\alpha \in \Lambda} \big )$ is compatible with the projective system $\left (  \left \{ \int^\oplus_{X_\alpha} \mathcal{B}(\mathcal{H}_{\alpha, p}) \, \mathrm{d} \mu_\alpha \right \}_{\alpha \in \Lambda}, \{ \phi_{\alpha, \beta} \}_{\alpha \leq \beta} \right )$  of von Neumann algebras \big ( see Subsection 1.1 of \cite{AG} \big ). Therefore, by the uniqueness of the projective limit, we get
\begin{align*}
\mathcal{M}_{\text{DEC}} = \varprojlim\limits_{\alpha \in \Lambda} \int^\oplus_{X_\alpha} \mathcal{B}(\mathcal{H}_{\alpha, p}) \, \mathrm{d} \mu_\alpha.
\end{align*}
Then by following \cite[Section 1]{MF} and the above discussion, we conclude that $\mathcal{M}_{\text{DEC}}$ is a locally von Neumann algebra.

Proof of (b): From Equation \eqref{eq; phi alpha beta}, we note that, if $T_{\beta, p} = c_{\beta, p} \cdot \mathrm{Id}_{\mathcal{H}_{\beta,p}}$, where $c_{\beta, p} \in \mathbb{C}$ for almost every $p \in X_\beta$, then 
\begin{equation*}
\phi_{\alpha, \beta} \left (\int^\oplus_{X_\beta} c_{\beta, p} \cdot \mathrm{Id}_{\mathcal{H}_{\beta,p}} \, \mathrm{d} \mu_\beta \right ) = \int^\oplus_{X_\alpha} J^\ast_{\beta, \alpha, p} c_{\beta, p} \cdot \mathrm{Id}_{\mathcal{H}_{\beta,p}} J_{\beta, \alpha, p} \, \mathrm{d} \mu_\alpha = \int^\oplus_{X_\alpha} c_{\beta, p} \cdot \mathrm{Id}_{\mathcal{H}_{\alpha, p}} \, \mathrm{d} \mu_\alpha.
\end{equation*}
This shows that $\phi_{\alpha, \beta} \left ( \int^\oplus_{X_\beta} \mathbb{C} \cdot \mathrm{Id}_{\mathcal{H}_{\beta,p}}  \, \mathrm{d} \mu_\beta \right ) = \int^\oplus_{X_\alpha} \mathbb{C} \cdot \mathrm{Id}_{\mathcal{H}_{\alpha ,p}} \, \mathrm{d} \mu_\alpha$. Thus the map ($\alpha \leq \beta$), $\psi_{\alpha, \beta} := \phi_{\alpha, \beta} \big |_{\int^\oplus_{X_\beta} \mathbb{C} \cdot \mathrm{Id}_{\mathcal{H}_{\beta,p}}  \, \mathrm{d} \mu_\beta }$ defines a surjective $\ast$-homomorphism and normal on $\int^\oplus_{X_\beta} \mathbb{C} \cdot \mathrm{Id}_{\mathcal{H}_{\beta,p}}  \, \mathrm{d} \mu_\beta$. In particular, $\psi_{\alpha, \alpha}$ is the identity map and $\psi_{\alpha, \beta} \circ \psi_{\beta, \gamma} = \psi_{\alpha, \gamma}$, whenever $\alpha \leq \beta \leq \gamma$. This means that $\left (  \left \{ \int^\oplus_{X_\alpha} \mathbb{C} \cdot \mathrm{Id}_{\mathcal{H}_{\alpha,p}}  \, \mathrm{d} \mu_\alpha \right \}_{\alpha \in \Lambda}, \{ \psi_{\alpha, \beta} \}_{\alpha \leq \beta} \right )$ is a projective system of abelian von Neumann algebras. 

On the other hand $\mathcal{M}_{\text{DIAG}}$ is a locally convex $\ast$-algebra (see (\ref{obs;M DEC and M DIAG are star algebras}) of Observations II). From Equation \eqref{eq;restriction of DiagLBO}, we know that $\phi_\alpha \big ( \mathcal{M}_{\text{DIAG}} \big ) = \int^\oplus_{X_\alpha} \mathbb{C} \cdot \mathrm{Id}_{\mathcal{H}_{\alpha ,p}} \, \mathrm{d} \mu_\alpha$. As a consequence, for each $\alpha \in \Lambda$, the map $\psi_\alpha := \phi_\alpha \big |_{\mathcal{M}_{\text{DIAG}}} : \mathcal{M}_{\text{DIAG}} \rightarrow \int^\oplus_{X_\alpha} \mathbb{C} \cdot \mathrm{Id}_{\mathcal{H}_{\alpha ,p}} \, \mathrm{d} \mu_\alpha$ is a continuous $\ast$-homomorphism. Moreover 
\begin{align*}
\psi_{\alpha, \beta} \circ \psi_\beta \left ( T =  \dilX f(p) \cdot \mathrm{Id}_{\mathcal{D}_p} \, \dmu \right )
&= \phi_{\alpha, \beta} \circ \phi_\beta \left ( \dilX f(p) \cdot \mathrm{Id}_{\mathcal{D}_p} \, \dmu \right ) \\
&= \phi_\alpha \left ( \dilX f(p) \cdot \mathrm{Id}_{\mathcal{D}_p} \, \dmu \right ) \\
&= \psi_\alpha \left ( \dilX f(p) \cdot \mathrm{Id}_{\mathcal{D}_p} \, \dmu \right ),
\end{align*}
whenever $\alpha \leq \beta$. This shows that the pair $ \big (\mathcal{M}_{\text{DIAG}}, \{ \psi_\alpha \}_{\alpha \in \Lambda} \big )$ is compatible with the projective system $\left (  \left \{ \int^\oplus_{X_\alpha} \mathbb{C} \cdot \mathrm{Id}_{\mathcal{H}_{\alpha,p}}  \, \mathrm{d} \mu_\alpha \right \}_{\alpha \in \Lambda}, \{ \psi_{\alpha, \beta} \}_{\alpha \leq \beta} \right )$ of abelian von Neumann algebras. Therefore, by the uniqueness of the projective limit, we get
\begin{align*}
\mathcal{M}_{\text{DIAG}} = \varprojlim\limits_{\alpha \in \Lambda} \int^\oplus_{X_\alpha} \mathbb{C} \cdot \mathrm{Id}_{\mathcal{H}_{\alpha,p}}  \, \mathrm{d} \mu_\alpha.
\end{align*}
Then again by following \cite[Section 1]{MF}, we conclude that $\mathcal{M}_{\text{DIAG}}$ is an abelian locally von Neumann algebra. 
\end{proof}

Motivated by the classical setup, where the abelian von Neumann algebra of diagonalizable bounded operators is the commutant of the von Neumann algebra of decomposable bounded operators (see Theorem \ref{thm;DeDibo vNA}), we explore in the remaining of this section the relationship between the locally von Neumann algebras $\mathcal{M}_{\text{DEC}}$ and $\big ( \mathcal{M}_{\text{DIAG}} \big )^ \prime$.

\begin{theorem} \label{thm; relation between M DEC and M DIAG}
Let $\big( X, \Sigma, \mu \big) $ be a locally standard measure space and for each $p \in X$ assign a locally Hilbert space $\mathcal{D}_p$, where $\mathcal{D}_p = \varinjlim\limits_{\alpha \in \Lambda} \mathcal{H}_{\alpha, p}$. Then we get
\begin{equation*}
\big ( \mathcal{M}_{\text{DIAG}} \big )^ \prime =  \varprojlim\limits_{\alpha \in \Lambda} \int^\oplus_{X_\alpha} \mathcal{B}(\mathcal{H}_{\alpha, p}) \, \mathrm{d} \mu_\alpha.
\end{equation*}
\end{theorem}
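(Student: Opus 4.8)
The plan is to compute the commutant $\big(\mathcal{M}_{\text{DIAG}}\big)^\prime$, taken inside the locally $C^\ast$-algebra $C^\ast_\mathcal{E}\big(\dilX \mathcal{D}_p \, \dmu\big)$, by reducing it to the classical commutant computation on each Hilbert space $\int^\oplus_{X_\alpha} \mathcal{H}_{\alpha, p} \, \mathrm{d}\mu_\alpha$ and then reassembling through the projective limit. Throughout I would use that every $T \in C^\ast_\mathcal{E}\big(\dilX \mathcal{D}_p \, \dmu\big)$ is the projective limit of its restrictions $\{T|_{\mathcal{H}_\alpha}\}_{\alpha}$ (Remark \ref{rem;lbo} and Equation \eqref{eq; inverese limit of bounded operators}), together with the isomorphisms $V_\alpha : \mathcal{H}_\alpha \to \int^\oplus_{X_\alpha} \mathcal{H}_{\alpha, p} \, \mathrm{d}\mu_\alpha$ of Equation \eqref{eq;iso}, so that each $T$ is faithfully recorded by the family $\{\phi_\alpha(T)\}_{\alpha}$, where $\phi_\alpha(T) = V_\alpha T V_\alpha^\ast$ as in Equation \eqref{eq; phi alpha}, now read as a $\ast$-homomorphism on all of $C^\ast_\mathcal{E}\big(\dilX \mathcal{D}_p \, \dmu\big)$. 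A key preliminary remark is that the family $\{\phi_\alpha\}_{\alpha}$ is jointly injective: since $\dilX \mathcal{D}_p \, \dmu = \bigcup_\alpha \mathcal{H}_\alpha$, an operator is determined by its restrictions, so $\phi_\alpha(A) = \phi_\alpha(B)$ for all $\alpha$ forces $A = B$.

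The central step is to establish, for $T \in C^\ast_\mathcal{E}\big(\dilX \mathcal{D}_p \, \dmu\big)$, the equivalence
\begin{equation*}
T \in \big(\mathcal{M}_{\text{DIAG}}\big)^\prime \iff \phi_\alpha(T) \in \left( \int^\oplus_{X_\alpha} \mathbb{C} \cdot \mathrm{Id}_{\mathcal{H}_{\alpha, p}} \, \mathrm{d}\mu_\alpha \right)^\prime \; \text{for every} \; \alpha \in \Lambda.
\end{equation*}
For the forward direction, if $TS = ST$ for all $S \in \mathcal{M}_{\text{DIAG}}$, then applying the $\ast$-homomorphism $\phi_\alpha$ gives $\phi_\alpha(T)\phi_\alpha(S) = \phi_\alpha(S)\phi_\alpha(T)$; here I would invoke the surjectivity $\phi_\alpha(\mathcal{M}_{\text{DIAG}}) = \int^\oplus_{X_\alpha} \mathbb{C}\cdot \mathrm{Id}_{\mathcal{H}_{\alpha,p}} \, \mathrm{d}\mu_\alpha$ established in the proof of Theorem \ref{thm;DEC and DIAG LvNA}(b), so that $\phi_\alpha(S)$ exhausts \emph{all} diagonalizable operators on $\int^\oplus_{X_\alpha} \mathcal{H}_{\alpha,p}\, \mathrm{d}\mu_\alpha$, whence $\phi_\alpha(T)$ lies in their commutant. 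Conversely, if $\phi_\alpha(T)$ commutes with every $\phi_\alpha(S)$ for each $\alpha$, then $\phi_\alpha(TS - ST) = 0$ for all $\alpha$, and joint injectivity forces $TS = ST$.

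Once this equivalence is in place, I would apply the classical Theorem \ref{thm;DeDibo vNA} on each standard measure space $(X_\alpha, \Sigma_\alpha, \mu_\alpha)$: that theorem exhibits the decomposable operators $\int^\oplus_{X_\alpha} \mathcal{B}(\mathcal{H}_{\alpha,p}) \, \mathrm{d}\mu_\alpha$ as a von Neumann algebra whose commutant is the diagonalizable algebra, so taking commutants once more (double commutant) identifies the commutant of $\int^\oplus_{X_\alpha} \mathbb{C}\cdot \mathrm{Id}_{\mathcal{H}_{\alpha,p}} \, \mathrm{d}\mu_\alpha$ with $\int^\oplus_{X_\alpha} \mathcal{B}(\mathcal{H}_{\alpha,p}) \, \mathrm{d}\mu_\alpha$. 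Hence $T \in \big(\mathcal{M}_{\text{DIAG}}\big)^\prime$ if and only if $\phi_\alpha(T) \in \int^\oplus_{X_\alpha} \mathcal{B}(\mathcal{H}_{\alpha,p}) \, \mathrm{d}\mu_\alpha$ for every $\alpha$. Since the connecting maps $\phi_{\alpha,\beta}$ of Equation \eqref{eq; phi alpha beta} carry decomposable operators to decomposable operators (as verified in the proof of Theorem \ref{thm;DEC and DIAG LvNA}(a)), the compatible family $\{\phi_\alpha(T)\}_{\alpha}$ is precisely a generic element of $\varprojlim_\alpha \int^\oplus_{X_\alpha} \mathcal{B}(\mathcal{H}_{\alpha,p}) \, \mathrm{d}\mu_\alpha$ sitting inside $C^\ast_\mathcal{E}\big(\dilX \mathcal{D}_p \, \dmu\big)$, yielding the claimed identification as genuine equality of subsets characterized by the levelwise condition.

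The main obstacle is the forward direction of the central equivalence, and specifically the reliance on $\psi_\alpha = \phi_\alpha\big|_{\mathcal{M}_{\text{DIAG}}}$ being surjective onto the full diagonalizable algebra at each level; without it, commutation with $\phi_\alpha(\mathcal{M}_{\text{DIAG}})$ would only certify commutation with a possibly smaller algebra and the commutant could be strictly larger, breaking the reduction. This surjectivity amounts to extending a given $g \in \mathrm{L}^\infty(X_\alpha, \mu_\alpha)$ to a measurable $f : X \to \mathbb{C}$ with $f\big|_{X_\beta} \in \mathrm{L}^\infty(X_\beta, \mu_\beta)$ for all $\beta$; extending $g$ by zero off $X_\alpha$ suffices, since $X_\alpha \in \Sigma$ and the trace $\sigma$-algebras of $\Sigma_\alpha$ and $\Sigma_\beta$ agree on $X_\alpha \cap X_\beta$. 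As this is already recorded in the proof of Theorem \ref{thm;DEC and DIAG LvNA}(b), I would cite it rather than reprove it, and thereby keep the argument focused on the levelwise commutant reduction.
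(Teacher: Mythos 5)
Your proposal is correct and follows essentially the same route as the paper: both conjugate by the unitaries $V_\alpha$ to reduce to the level-$\alpha$ commutant, use that $\mathcal{M}_{\text{DIAG}}$ maps onto the full diagonalizable algebra on each $\int^\oplus_{X_\alpha} \mathcal{H}_{\alpha,p}\,\mathrm{d}\mu_\alpha$ (the paper realizes this by extending $c_{\alpha,p}$ by zero off $X_\alpha$, exactly the extension you describe), invoke the classical Theorem \ref{thm;DeDibo vNA} to identify that commutant with $\int^\oplus_{X_\alpha}\mathcal{B}(\mathcal{H}_{\alpha,p})\,\mathrm{d}\mu_\alpha$, and conclude by compatibility with the projective system. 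Your explicit statement of the levelwise equivalence and of joint injectivity of $\{\phi_\alpha\}$ is a slightly more careful packaging of the same argument, not a different one.
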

\begin{proof}
From the proof of Theorem \ref{thm;DEC and DIAG LvNA}, we recall that $\left (  \left \{ \int^\oplus_{X_\alpha} \mathcal{B}(\mathcal{H}_{\alpha, p}) \, \mathrm{d} \mu_\alpha \right \}_{\alpha \in \Lambda}, \{ \phi_{\alpha, \beta} \}_{\alpha \leq \beta} \right )$  is a projective system of von Neumann algebras given by decomposable bounded operators on $\int^\oplus_{X_\alpha} \mathcal{H}_{\alpha, p} \, \mathrm{d} \mu_\alpha$ for each $\alpha \in \Lambda$. 
Since $\mathcal{M}_{\text{DIAG}}$ is a locally von Neumann algebra in $C^\ast_\mathcal{E} \left(\displaystyle \dilX \mathcal{D}_p \, \dmu \right)$, the commutant defined by
\begin{equation*}
(\mathcal{M}_{\text{DIAG}})^\prime := \left \{ T \in C^\ast_\mathcal{E} \left(\displaystyle \dilX \mathcal{D}_p \, \dmu \right) \; \; : \; \; TS = ST \; \; \text{for all} \; \;   S  \in \mathcal{M}_{\text{DIAG}} \right \}
\end{equation*}
is also a locally convex $\ast$-subalgebra. Firstly, note from Theorem \ref{thm;DeDibo vNA} that the commutant 
\begin{equation} \label{eq; DeDibo vNA}
\left (  \int^\oplus_{X_\alpha} \mathbb{C} \cdot \mathrm{Id}_{\mathcal{H}_{\alpha,p}}  \, \mathrm{d} \mu_\alpha  \right )^\prime = \int^\oplus_{X_\alpha} \mathcal{B}(\mathcal{H}_{\alpha, p}) \, \mathrm{d} \mu_\alpha.
\end{equation} 
Next by assuming $T \in (\mathcal{M}_{\text{DIAG}})^\prime$ and considering the unitary operator $V_\alpha$ defined in Equation \eqref{eq;iso}, we show that the bounded operator $V_\alpha T V_\alpha^\ast$ on $\int^\oplus_{X_\alpha} \mathcal{H}_{\alpha, p} \, \mathrm{d} \mu_\alpha$ is decomposable for each $\alpha \in \Lambda$. Fix $\alpha \in \Lambda$ and $\int^\oplus_{X_\alpha} c_{\alpha, p} \cdot \mathrm{Id}_{\mathcal{H}_{\alpha,p}}  \, \mathrm{d} \mu_\alpha \in \int^\oplus_{X_\alpha} \mathbb{C} \cdot \mathrm{Id}_{\mathcal{H}_{\alpha,p}}  \, \mathrm{d} \mu_\alpha$ and define a function $f : X \rightarrow \mathbb{C}$ as 
\begin{equation*}
f(p) := \begin{cases}
c_{\alpha, p}, & \text{if} \;\; p \in X_\alpha; \\
o & \text{otherwise},
\end{cases} 
\end{equation*}  
then $f$ is measurable. Further, for a fixed $x = \int^\oplus_{X_\alpha} x(p)  \, \mathrm{d} \mu_\alpha \in \int^\oplus_{X_\alpha} \mathcal{H}_{\alpha,p}  \, \mathrm{d} \mu_\alpha$ consider the function $u : X \rightarrow \bigcup\limits_{p \in X} \mathcal{D}_p$ as
\begin{equation*}
u(p) := \begin{cases}
x(p), & \text{if} \;\; p \in X_\alpha; \\
o_{\mathcal{D}_p} & \text{otherwise}.
\end{cases} 
\end{equation*} 
It is immediate to see that $u = \dilX u(p) \, \mathrm{d} \mu  \in \mathcal{H}_\alpha$ (see Equation \eqref{eq; H alpha}). Since $T \in (\mathcal{M}_{\text{DIAG}})^\prime$, we get
\begin{align*}
\left ( V_\alpha T V^\ast_\alpha \right) \left (  \int^\oplus_{X_\alpha} c_{\alpha, p} \cdot \mathrm{Id}_{\mathcal{H}_{\alpha,p}}  \, \mathrm{d} \mu_\alpha \right ) \int^\oplus_{X_\alpha} x(p)  \, \mathrm{d} \mu_\alpha  &= \left ( V_\alpha T V^\ast_\alpha \right) \int^\oplus_{X_\alpha}  c_{\alpha, p} \cdot x(p)  \, \mathrm{d} \mu_\alpha \\
&= \left ( V_\alpha T V^\ast_\alpha \right) \left( V_\alpha \dilX f(p) \cdot u(p)  \, \mathrm{d} \mu \right) \\
&=  V_\alpha T \left( \dilX f(p) \cdot \mathrm{Id}_{\mathcal{D}_p}  \, \mathrm{d} \mu \right) \left( \dilX u(p) \, \mathrm{d} \mu \right) \\
&= V_\alpha \left( \dilX f(p) \cdot \mathrm{Id}_{\mathcal{D}_p}  \, \mathrm{d} \mu \right) T \left( \dilX u(p) \, \mathrm{d} \mu \right) \\
&= V_\alpha \left( \dilX f(p) \cdot \mathrm{Id}_{\mathcal{D}_p}  \, \mathrm{d} \mu \right) \left( \dilX (Tu)(p) \, \mathrm{d} \mu \right) \\
&= V_\alpha \left( \dilX f(p) \cdot (Tu)(p) \, \mathrm{d} \mu \right) \\
&= \int^\oplus_{X_\alpha} c_{\alpha, p} \cdot (Tu)(p)  \, \mathrm{d} \mu_\alpha \\
&= \left (  \int^\oplus_{X_\alpha} c_{\alpha, p} \cdot \mathrm{Id}_{\mathcal{H}_{\alpha,p}}  \, \mathrm{d} \mu_\alpha \right ) \left ( V_\alpha T V^\ast_\alpha \right) \int^\oplus_{X_\alpha} x(p)  \, \mathrm{d} \mu_\alpha.
\end{align*}
Since $\int^\oplus_{X_\alpha} x(p)  \, \mathrm{d} \mu_\alpha$, $\int^\oplus_{X_\alpha} c_{\alpha, p} \cdot \mathrm{Id}_{\mathcal{H}_{\alpha,p}}  \, \mathrm{d} \mu_\alpha$ and $\alpha \in \Lambda$ were arbitrarily chosen, by using Equation \eqref{eq; DeDibo vNA}, we get the bounded operator $V_\alpha T V_\alpha^\ast \in \int^\oplus_{X_\alpha} \mathcal{B} \big (\mathcal{H}_{\alpha, p} \big ) \, \mathrm{d} \mu_\alpha$ for every $\alpha \in \Lambda$. In particular, we denote the family by $\big \{ S_{\alpha, p} \big \}_{p \in X_\alpha}$ such that $V_\alpha T V^\ast_\alpha = \int^\oplus_{X_\alpha} S_{\alpha, p} \, \mathrm{d} \mu_\alpha$. This observation yields the following well defined map $\gamma_\alpha : (\mathcal{M}_{\text{DIAG}})^ \prime  
\rightarrow \int^\oplus_{X_\alpha} \mathcal{B}(\mathcal{H}_{\alpha, p}) \, \mathrm{d} \mu_\alpha$ given by
\begin{equation} \label{eq; gamma alpha}
\gamma_\alpha (T) = V_\alpha  T V_\alpha^*  \; \; \;  \text{for every} \; \; T \in (\mathcal{M}_{\text{DIAG}})^\prime.
\end{equation}
It follows from the definition that $\gamma_\alpha$ is a continuous map. Moreover, for $T \in (\mathcal{M}_{\text{DIAG}})^\prime$ and whenever $\alpha \leq \beta$, we have 
\begin{equation*}
\phi_{\alpha, \beta} \circ \gamma_\beta ( T )  = \phi_{\alpha, \beta} \left ( V_\beta  T V_\beta^* \right ) 
= \int^\oplus_{X_\alpha} J^\ast_{\beta, \alpha, p} S_{\beta, p} J_{\beta, \alpha, p} \, \mathrm{d} \mu_\alpha = \gamma_\alpha ( T ),
\end{equation*}
where the last equality holds true as the operator $T$ is locally bounded and the Hilbert space $\mathcal{H}_\alpha$ remains invariant under $T$.  Thus the pair $ \big ( \big (\mathcal{M}_{\text{DIAG}} \big )^ \prime, \{ \gamma_\alpha \}_{\alpha \in \Lambda} \big )$ is compatible with the projective system  $\left (  \left \{ \int^\oplus_{X_\alpha} \mathcal{B}(\mathcal{H}_{\alpha, p}) \, \mathrm{d} \mu_\alpha \right \}_{\alpha \in \Lambda}, \{ \phi_{\alpha, \beta} \}_{\alpha \leq \beta} \right )$ of von Neumann algebras. Therefore, by the uniqueness of the projective limit, we get
\begin{align*}
(\mathcal{M}_{\text{DIAG}})^ \prime = \varprojlim\limits_{\alpha \in \Lambda} \int^\oplus_{X_\alpha} \mathcal{B}(\mathcal{H}_{\alpha, p}) \, \mathrm{d} \mu_\alpha.
\end{align*}
This proves the theorem.
\end{proof}

\begin{remark} \label{rem; inverse limits of decomposable vNA}
An appeal to the Part (a) of Theorem \ref{thm;DEC and DIAG LvNA} and Theorem \ref{thm; relation between M DEC and M DIAG}, we can see that the locally von Neumann algebras $\mathcal{M}_{\text{DEC}}$ and $(\mathcal{M}_{\text{DIAG}})^\prime$ are both projective limits of projective system  $\left (  \left \{ \int^\oplus_{X_\alpha} \mathcal{B}(\mathcal{H}_{\alpha, p}) \, \mathrm{d} \mu_\alpha \right \}_{\alpha \in \Lambda}, \{ \phi_{\alpha, \beta} \}_{\alpha \leq \beta} \right )$ of von Neumann algebras. So there exists a unique continuous $\ast$-homomorphism $\Psi : \mathcal{M}_{\text{DEC}} \rightarrow (\mathcal{M}_{\text{DIAG}})^\prime$ such that
\begin{equation*}
\up{\gamma}_\alpha \circ \Psi = \phi_\alpha
\end{equation*}
for every $\alpha \in \Lambda$, where $\phi_\alpha$ is defined as in equation \eqref{eq; phi alpha} and $\gamma_\alpha$ is defined as in equation \eqref{eq; gamma alpha}.
\end{remark}

\subsection{Observations III} Let $\big( X, \Sigma, \mu \big) $ be a locally standard measure space and for each $p \in X$ assign a locally Hilbert space $\mathcal{D}_p$, where $\mathcal{D}_p = \varinjlim\limits_{\alpha \in \Lambda} \mathcal{H}_{\alpha, p}$. Then $\displaystyle \dilX \mathcal{D}_p \, \dmu$ is a locally Hilbert space with the strictly inductive system $\mathcal{E} = \big \{ \mathcal{H}_\alpha \big \}_{\alpha \in \Lambda}$. We recall that $C^\ast_\mathcal{E} \left ( \displaystyle \dilX \mathcal{D}_p \, \dmu  \right )$ denotes the collection of all locally bounded operators on $\displaystyle \dilX \mathcal{D}_p \, \dmu$. We record the following observations in order to realize the containment of the locally von Neumann algebras $\mathcal{M}_{\text{DEC}}$, $\mathcal{M}_{\text{DIAG}}$ and their commutants. 
\begin{enumerate}
\item \label{obs; M Dec and M Diag commutant}
$\mathcal{M}_{\text{DEC}} \subseteq  (\mathcal{M}_{\text{DIAG}})^\prime$ and $\mathcal{M}_{\text{DIAG}} \subseteq (\mathcal{M}_{\text{DEC}})^\prime$
\begin{proof}
Let $T \in \mathcal{M}_{\text{DEC}}$, then from \eqref{def;Dec(lbo)} of Definition \ref{def;DecDiag(lbo)} we get a family $ \big \{ T_p : \mathcal{D}_p \rightarrow \mathcal{D}_p \big \}_{p \in X}$ of locally bounded operators such that for any $u \in \displaystyle \dilX \mathcal{D}_p \, \dmu$, we have
\begin{align*}
(Tu)(p) = T_pu(p)
\end{align*}
for almost every $p \in X$. Further, if $S \in \mathcal{M}_{\text{DIAG}}$ then from \eqref{def;Diag(lbo)} of Definition \ref{def;DecDiag(lbo)} there is a measurable function $f : X \rightarrow \mathbb{C}$ such that for any $u \in \displaystyle \dilX \mathcal{D}_p \, \dmu$, we have 
\begin{equation*}
(Su)(p) = f(p)u(p)
\end{equation*}
for almost every $p \in X$. So for any $u = \displaystyle \dilX u(p) \, \dmu \in \displaystyle \dilX \mathcal{D}_p \, \dmu$, we get
\begin{align*}
\big (TS \big ) \left (\dilX u(p) \, \dmu \right ) = T \left(\dilX f(p)u(p) \, \dmu \right) &= \dilX T_p f(p) u(p) \, \dmu \\
&= \dilX  f(p) T_p u(p) \, \dmu \\
&= S \left (\dilX T_pu(p) \, \dmu \right) \\
&= \big  (ST \big ) \left(\dilX u(p) \, \dmu \right )
\end{align*}
Since $T \in \mathcal{M}_{\text{DEC}}, \; S \in \mathcal{M}_{\text{DIAG}}$ and $u \in \displaystyle \dilX \mathcal{D}_p \, \dmu$ were arbitrarily chosen, we obtain
$\mathcal{M}_{\text{DEC}} \subseteq (\mathcal{M}_{\text{DIAG}})^\prime.$ Consequently, $\mathcal{M}_{\text{DIAG}} \subseteq (\mathcal{M}_{\text{DEC}})^\prime$.
\end{proof}
\noindent In view of this, we conclude that the map $\Psi$ in Remark \ref{rem; inverse limits of decomposable vNA} is the inclusion map.


\item \label{obs; all contanments}
We have the following inclusion relations
\begin{equation*}
\mathcal{M}_{\text{DIAG}} \subseteq \mathcal{M}_{\text{DEC}} \subseteq (\mathcal{M}_{\text{DIAG}})^\prime \subseteq C^\ast_\mathcal{E} \left ( \displaystyle \dilX \mathcal{D}_p \, \dmu  \right ).
\end{equation*}
Note that, $\mathcal{M}_{\text{DIAG}} = \mathcal{M}_{\text{DEC}}$ in some cases. For instance, if $\mathcal{D}_p = \mathbb{C}$ for each $p \in X$. However, the inclusion can be strict \big (see Example \ref{eg; Dec but not Diag 1} and Example \ref{eg; Dec but not Diag 2}\big). Similarly, Example \ref{eg; LBO but not Dec} shows that $(\mathcal{M}_{\text{DIAG}})^\prime$ may not be same as $C^\ast_\mathcal{E} \left ( \displaystyle \dilX \mathcal{D}_p \, \dmu  \right )$. Also, it infers that $\mathcal{M}_{\text{DEC}}$ can be a proper subalgebra of $C^\ast_\mathcal{E} \left ( \displaystyle \dilX \mathcal{D}_p \, \dmu  \right )$.

\item In case of direct integral of Hilbert spaces, every bounded operator in the commutant of diagonalizable bounded operators is decomposable (see Theorem \ref{thm;DeDibo vNA}). While in this case, $\mathcal{M}_{\text{DEC}}$ and $(\mathcal{M}_{\text{DIAG}})^\prime$ are identified as the projective limit of the projective system $\left (  \left \{ \int^\oplus_{X_\alpha} \mathcal{B}(\mathcal{H}_{\alpha, p}) \, \mathrm{d} \mu_\alpha \right \}_{\alpha \in \Lambda}, \{ \phi_{\alpha, \beta} \}_{\alpha \leq \beta} \right )$ (see Remark \ref{rem; inverse limits of decomposable vNA}). But the set equality between $\mathcal{M}_{\text{DEC}}$ and $(\mathcal{M}_{\text{DIAG}})^\prime$ is not known yet. However, we have described a few cases where $\mathcal{M}_{\text{DEC}}$ coincides with $(\mathcal{M}_{\text{DIAG}})^\prime$ (see Theorem \ref{thm; M DEC = M DIAG Commutant}). 
\end{enumerate}


\begin{lemma}
Let $T \in C^\ast_\mathcal{E} \left ( \displaystyle \dilX \mathcal{D}_p \, \dmu  \right )$, where $\mathcal{E} = \big \{ \mathcal{H}_\alpha \big \}_{\alpha \in \Lambda}$. Then the family $\big \{ V_\alpha T V^\ast_\alpha \big \}_{\alpha \in \Lambda}$ of bounded operators satisfies the following relation,
\begin{equation*}
V_\alpha T^n V_\alpha^\ast = \big ( V_\alpha J^\ast_{\beta, \alpha} V_\beta^\ast \big ) \big ( V_\beta T^n V_\beta^\ast \big ) \big ( V_\beta J_{\beta, \alpha} V_\alpha^\ast \big ),
\end{equation*}
\text{whenever}\; $\alpha \leq \beta$, for all $n\in \mathbb{N}$. Here $J_{\beta, \alpha}\colon \mathcal{H}_{\alpha} \to \mathcal{H}_{\beta}$ is the inclusion map (for $\alpha \leq \beta).$
\end{lemma}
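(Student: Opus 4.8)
The plan is to reduce the claimed operator identity to the single fact that, for a locally bounded operator, each $\mathcal{H}_\alpha$ is a reducing subspace of the restriction $T_\beta = T|_{\mathcal{H}_\beta}$ whenever $\alpha \le \beta$, so that powers of $T_\beta$ restrict compatibly to powers of $T_\alpha = T|_{\mathcal{H}_\alpha}$. Recalling that $J_{\beta,\alpha}^\ast J_{\beta,\alpha} = \mathrm{Id}_{\mathcal{H}_\alpha}$ while $J_{\beta,\alpha} J_{\beta,\alpha}^\ast$ is the orthogonal projection of $\mathcal{H}_\beta$ onto $\mathcal{H}_\alpha$, the whole statement will collapse once the key identity $J_{\beta,\alpha}^\ast T_\beta^n J_{\beta,\alpha} = T_\alpha^n$ is in hand.

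First I would record the reducing property. Since $T$ is locally bounded, Definition \ref{def;lbo} gives $T(\mathcal{H}_\alpha) \subseteq \mathcal{H}_\alpha$, hence $T_\beta|_{\mathcal{H}_\alpha} = T|_{\mathcal{H}_\alpha} = T_\alpha$; and Remark \ref{rem;lbo}(2) gives $T_\beta^\ast|_{\mathcal{H}_\alpha} = T_\alpha^\ast$, so $\mathcal{H}_\alpha$ is invariant under both $T_\beta$ and $T_\beta^\ast$. Thus $\mathcal{H}_\alpha$ reduces $T_\beta$ inside $\mathcal{H}_\beta$, and with respect to the orthogonal decomposition $\mathcal{H}_\beta = \mathcal{H}_\alpha \oplus (\mathcal{H}_\alpha^\perp \cap \mathcal{H}_\beta)$ the operator $T_\beta$ is block diagonal with the $\mathcal{H}_\alpha$-block equal to $T_\alpha$. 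Consequently $T_\beta^n$ is block diagonal with $\mathcal{H}_\alpha$-block $T_\alpha^n$, which is exactly the identity $J_{\beta,\alpha}^\ast T_\beta^n J_{\beta,\alpha} = T_\alpha^n$, the conjugation by $J_{\beta,\alpha}$ extracting precisely that block.

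Next I would interpret the symbol $V_\alpha T^n V_\alpha^\ast$ correctly: since $V_\alpha$ acts on $\mathcal{H}_\alpha$ and $\mathcal{H}_\alpha$ is $T^n$-invariant, this operator on $\int^\oplus_{X_\alpha} \mathcal{H}_{\alpha,p}\,\mathrm{d}\mu_\alpha$ equals $V_\alpha T_\alpha^n V_\alpha^\ast$. I would then substitute the key identity into the right-hand side and cancel using the unitarity of $V_\beta$:
\begin{align*}
\big( V_\alpha J^\ast_{\beta, \alpha} V_\beta^\ast \big)\big( V_\beta T^n V_\beta^\ast \big)\big( V_\beta J_{\beta, \alpha} V_\alpha^\ast \big)
&= V_\alpha J^\ast_{\beta, \alpha}\,\big( V_\beta^\ast V_\beta \big)\, T_\beta^n\, \big( V_\beta^\ast V_\beta \big)\, J_{\beta, \alpha} V_\alpha^\ast \\
&= V_\alpha J^\ast_{\beta, \alpha} T_\beta^n J_{\beta, \alpha} V_\alpha^\ast \\
&= V_\alpha T_\alpha^n V_\alpha^\ast = V_\alpha T^n V_\alpha^\ast,
\end{align*}
which is the desired equality.

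The only genuine point, and hence the main obstacle, is justifying the reducing-subspace step cleanly: one must confirm that the invariance built into Definition \ref{def;lbo} (namely $T(\mathcal{H}_\alpha^\perp \cap \mathcal{D}) \subseteq \mathcal{H}_\alpha^\perp \cap \mathcal{D}$), together with Remark \ref{rem;lbo}(2), really yields invariance of $\mathcal{H}_\alpha^\perp \cap \mathcal{H}_\beta$ under $T_\beta$, so that $T_\beta$ is genuinely block diagonal and the power $T_\beta^n$ does not leak between the two blocks. Everything else is bookkeeping with isometries and unitaries, and the argument is uniform in $n$ because block-diagonality is preserved under taking powers.
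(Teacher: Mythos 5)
Your proof is correct and follows essentially the same route as the paper's: both arguments rest on the invariance of $\mathcal{H}_\alpha$ under the locally bounded operator $T$ (hence under $T^n$), which gives the key identity $J^\ast_{\beta,\alpha}T_\beta^n J_{\beta,\alpha}=T_\alpha^n$, after which the claim is bookkeeping with the unitaries $V_\alpha,V_\beta$ and the isometry $V_\beta J_{\beta,\alpha}V_\alpha^\ast$. If anything, your explicit treatment of general $n$ via block-diagonality (equivalently, applying the argument to $T^n$, which is again locally bounded) is slightly more careful than the paper's one-line deduction of the power case from the case $n=1$.
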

\begin{proof}
Recall that $\displaystyle \dilX \mathcal{D}_p \, \dmu = \varinjlim\limits_{\alpha \in \Lambda} \mathcal{H}_\alpha$, where $\mathcal{H}_
\alpha$ is defined as in Equation \eqref{eq; H alpha}. Let $x \in \int^\oplus_{X_\alpha} \mathcal{H}_{\alpha, p} \, \mathrm{d} \mu_\alpha$ and $\alpha \leq \beta$. By using the definitions of $V_\alpha$ and $V_\beta$, we have 
\begin{equation} \label{eq; V beta J beta alpha v alpha star}
V_\beta J_{\beta, \alpha} V_\alpha^\ast(x) = \begin{cases}
x(p), & \text{if} \;\; p \in X_\alpha; \\
& \\
0_{\mathcal{H}_{\beta, p}} & \text{if} \;\; p \in X_\beta \setminus X_\alpha
\end{cases}
\end{equation}
and so,
\begin{align*}
\big \| V_\beta J_{\beta, \alpha} V_\alpha^\ast(x) \big  \|^2& = \int_{X_\beta} \big \langle V_\beta J_{\beta, \alpha} V_\alpha^\ast(x)(p), \; V_\beta J_{\beta, \alpha} V_\alpha^\ast(x)(p) \big \rangle_{\mathcal{H}_{\beta, p}} \; \mathrm{d} \mu_\beta \\
&= \int_{X_\alpha} \big \langle x(p), \; x(p)  \big \rangle_{\mathcal{H}_{\alpha, p}} \; \mathrm{d} \mu_\alpha \\
&= \big \| x \big \|^2.
\end{align*}
This shows that $V_\beta J_{\beta, \alpha} V_\alpha^\ast$ is actually an isometry. Now by using the fact that $T$ is a locally bounded operator and $V_\beta$ is unitary, we obtain
\begin{align*}
\big ( V_\alpha T V_\alpha^\ast \big ) \big  (x \big ) = \big ( V_\alpha T \big ) \big ( V_\alpha^\ast x \big ) 
&= \big ( V_\alpha T J_{\beta, \alpha} \big ) \big ( V_\alpha^\ast x \big ) \\
&= \big ( V_\alpha J^\ast_{\beta, \alpha} T J_{\beta, \alpha} V_\alpha^\ast \big ) \big ( x \big ) \\
&= \big ( V_\alpha J^\ast_{\beta, \alpha} V_\beta^\ast \big ) \big ( V_\beta T V_\beta^\ast \big ) \big ( V_\beta J_{\beta, \alpha} V_\alpha^\ast \big ) \big ( x \big ).
\end{align*}
Since $x \in \int^\oplus_{X_\alpha} \mathcal{H}_{\alpha, p} \, \mathrm{d} \mu_\alpha$ was arbitrarily chosen, we get 
\begin{equation} \label{eq; dilation}
V_\alpha T V_\alpha^\ast = \big ( V_\alpha J^\ast_{\beta, \alpha} V_\beta^\ast \big ) \big ( V_\beta T V_\beta^\ast \big ) \big ( V_\beta J_{\beta, \alpha} V_\alpha^\ast \big ).
\end{equation}
Equivalently, the following diagram commutes.
\begin{center}
\begin{tikzcd}[sep=huge]
& \int^\oplus_{X_\beta} \mathcal{H}_{\beta, p} \, \mathrm{d} \mu_\beta \arrow[rr, "V_\beta T V_\beta^\ast"]  & & \int^\oplus_{X_\beta} \mathcal{H}_{\beta, p} \, \mathrm{d} \mu_\beta \\
& & \mathcal{H}_\beta  \arrow[ul, dotted, "V_\beta"] \arrow[ur, dotted, "V_\beta"']  & \\
& & \mathcal{H}_\alpha \arrow[u, dotted, "J_{\beta, \alpha}"] & \\
& \int^\oplus_{X_\alpha} \mathcal{H}_{\alpha, p} \, \mathrm{d} \mu_\alpha \arrow[ur, dotted, "V_\alpha^\ast"]  \arrow[uuu, "V_\beta J_{\beta, \alpha}V^\ast_\alpha"] \arrow[rr, "V_\alpha T V_\alpha^\ast"'] & &  \int^\oplus_{X_\alpha} \mathcal{H}_{\alpha, p} \, \mathrm{d} \mu_\alpha \arrow[ul, dotted, "V_\alpha^\ast"'] \arrow[uuu, "V_\beta J_{\beta, \alpha}V^\ast_\alpha"']
\end{tikzcd}
\end{center}
Finally, it follows from Equation \eqref{eq; dilation} that 
\begin{equation} \label{eq; power dilation}
V_\alpha T^n V_\alpha^\ast = \big ( V_\alpha J^\ast_{\beta, \alpha} V_\beta^\ast \big ) \big ( V_\beta T^n V_\beta^\ast \big ) \big ( V_\beta J_{\beta, \alpha} V_\alpha^\ast \big ),
\end{equation}
whenever $\alpha \leq \beta$ and $n \in \mathbb{N}$.
\end{proof}

\begin{theorem} \label{thm; M DEC = M DIAG Commutant}
Let $\big(\Lambda, \leq \big)$ be a directed POSET and $\big ( X, \Sigma, \mu \big )$ be a locally standard measure space. Then we get $\mathcal{M}_{\text{DEC}} = (\mathcal{M}_{\text{DIAG}})^\prime$ in the following two cases:
\begin{enumerate}
\item if $\Lambda$ is a countable set;
\item if $\mu$ is a counting measure on $X$.
\end{enumerate}
\end{theorem}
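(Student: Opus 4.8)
My plan is to prove the two set inclusions separately. The inclusion $\mathcal{M}_{\text{DEC}} \subseteq (\mathcal{M}_{\text{DIAG}})^\prime$ is already in hand from \eqref{obs; M Dec and M Diag commutant} of Observations III, so I would put all the work into the reverse inclusion $(\mathcal{M}_{\text{DIAG}})^\prime \subseteq \mathcal{M}_{\text{DEC}}$, proved separately in each of the two cases. I would fix $T \in (\mathcal{M}_{\text{DIAG}})^\prime$ and invoke the proof of Theorem \ref{thm; relation between M DEC and M DIAG}: for every $\alpha \in \Lambda$ the bounded operator $\gamma_\alpha(T) = V_\alpha T V_\alpha^\ast$ on $\int^\oplus_{X_\alpha}\mathcal{H}_{\alpha,p}\,\mathrm{d}\mu_\alpha$ is decomposable, so I may write $V_\alpha T V_\alpha^\ast = \int^\oplus_{X_\alpha} S_{\alpha,p}\,\mathrm{d}\mu_\alpha$ with $S_{\alpha,p}\in\mathcal{B}(\mathcal{H}_{\alpha,p})$. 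The goal is then to manufacture a single family $\{T_p : \mathcal{D}_p \to \mathcal{D}_p\}_{p\in X}$ of locally bounded operators whose fibers restrict to these $S_{\alpha,p}$, so that $T = \dilX T_p \, \dmu \in \mathcal{M}_{\text{DEC}}$ by \eqref{def;Dec(lbo)} of Definition \ref{def;DecDiag(lbo)}.

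The middle step would be, for each pair $\alpha \leq \beta$, to record two compatibility relations valid for $\mu_\alpha$-almost every $p \in X_\alpha$. First, since $T$ is locally bounded, $\mathcal{H}_\alpha$ reduces $T$ inside $\mathcal{H}_\beta$; transporting by $V_\beta$, the orthogonal projection onto $V_\beta(\mathcal{H}_\alpha)$ is decomposable, with fibers the projections onto $\mathcal{H}_{\alpha,p}$ on $X_\alpha$ and $0$ on $X_\beta \setminus X_\alpha$, and it commutes with $\int^\oplus_{X_\beta} S_{\beta,p}\,\mathrm{d}\mu_\beta$. By the fiberwise structure of commuting decomposable operators (the classical theory underlying Theorem \ref{thm;DeDibo vNA}) this forces $\mathcal{H}_{\alpha,p}$ to reduce $S_{\beta,p}$ for almost every $p$. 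Second, the identity $\phi_{\alpha,\beta}\circ\gamma_\beta = \gamma_\alpha$ from the proof of Theorem \ref{thm; relation between M DEC and M DIAG} together with \eqref{eq; phi alpha beta} gives $\int^\oplus_{X_\alpha} J^\ast_{\beta,\alpha,p} S_{\beta,p} J_{\beta,\alpha,p}\,\mathrm{d}\mu_\alpha = \int^\oplus_{X_\alpha} S_{\alpha,p}\,\mathrm{d}\mu_\alpha$, whence $J^\ast_{\beta,\alpha,p} S_{\beta,p} J_{\beta,\alpha,p} = S_{\alpha,p}$ almost everywhere. Combining the two, on the $\mu_\alpha$-conull set where both hold I obtain that $\mathcal{H}_{\alpha,p}$ reduces $S_{\beta,p}$ and $S_{\beta,p}\big|_{\mathcal{H}_{\alpha,p}} = S_{\alpha,p}$, which is exactly the coherence required by Remark \ref{rem;lbo} for the fibers to assemble into a locally bounded operator on $\mathcal{D}_p = \varinjlim_{\alpha}\mathcal{H}_{\alpha,p}$.

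Finally, I would pass from these pair-dependent, almost-everywhere statements to a single exceptional set, and this is the crux where the two hypotheses enter. Let $N_{\alpha,\beta}\subseteq X_\alpha$ denote the $\mu_\alpha$-null set on which the coherence fails; since $N_{\alpha,\beta}\subseteq X_\alpha$ the projective-limit structure of $\mu$ gives $\mu(N_{\alpha,\beta})=\mu_\alpha(N_{\alpha,\beta})=0$. In case (1), countability of $\Lambda$ makes $\{(\alpha,\beta):\alpha\leq\beta\}$ countable, so $N := \bigcup_{\alpha\leq\beta} N_{\alpha,\beta}$ is $\mu$-null; for each $p \in X\setminus N$ the family $\{S_{\alpha,p}\}$, indexed by the cofinal set $\{\alpha : p \in X_\alpha\}$, is coherent and defines a locally bounded $T_p$ on $\mathcal{D}_p$ (put $T_p := 0$ on $N$). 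In case (2), a counting measure forces every $N_{\alpha,\beta}$ to be empty, so the coherence holds for all $p$ and all pairs at once and $T_p$ is defined everywhere. In either case, given $u\in\dilX\mathcal{D}_p\,\dmu$ I choose $\alpha$ with $u\in\mathcal{H}_\alpha$ and find $(Tu)(p) = S_{\alpha,p}u(p) = T_p u(p)$ for almost every $p\in X_\alpha$, while $(Tu)(p)=0=T_pu(p)$ off $X_\alpha$; thus $T = \dilX T_p \, \dmu \in \mathcal{M}_{\text{DEC}}$. The hard part is precisely this assembly: without countability of $\Lambda$ or the triviality of null sets under a counting measure, an uncountable union of the $N_{\alpha,\beta}$ need not be null and the fibers may fail to cohere on a non-null set, which is exactly why the equality $\mathcal{M}_{\text{DEC}} = (\mathcal{M}_{\text{DIAG}})^\prime$ is not asserted in general.
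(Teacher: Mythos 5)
Your proposal is correct and follows essentially the same route as the paper: both reduce the problem to the fiberwise coherence $S_{\beta,p}\big|_{\mathcal{H}_{\alpha,p}} = S_{\alpha,p}$ for $\mu_\alpha$-almost every $p \in X_\alpha$ (whenever $\alpha \leq \beta$), and then assemble the fibers $T_p = \varprojlim_{\alpha} S_{\alpha,p}$ after discarding a single exceptional set, which is exactly where countability of $\Lambda$, respectively the counting measure, enters. The only difference is local: the paper establishes the coherence by contradiction, testing the dilation identity $V_\alpha T V_\alpha^\ast = \big(V_\alpha J^\ast_{\beta,\alpha}V_\beta^\ast\big)\big(V_\beta T V_\beta^\ast\big)\big(V_\beta J_{\beta,\alpha}V_\alpha^\ast\big)$ against an explicit witness vector supported on a hypothetical positive-measure disagreement set, whereas you derive it from the almost-everywhere reduction of $S_{\beta,p}$ by $\mathcal{H}_{\alpha,p}$ combined with the compression relation $\phi_{\alpha,\beta}\circ\gamma_\beta = \gamma_\alpha$; both are sound.
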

\begin{proof}
As we know from (\ref{obs; M Dec and M Diag commutant}) of Observations III, that $\mathcal{M}_{\text{DEC}} \subseteq (\mathcal{M}_{\text{DIAG}})^\prime$, in order to prove the result we show $(\mathcal{M}_{\text{DIAG}})^\prime \subseteq \mathcal{M}_{\text{DEC}}$. If $T \in (\mathcal{M}_{\text{DIAG}})^\prime$, then from the proof of Theorem \ref{thm; relation between M DEC and M DIAG} we know that $V_\alpha T V_\alpha^\ast$ on $\int^\oplus_{X_\alpha} \mathcal{H}_{\alpha, p} \, \mathrm{d} \mu_\alpha$ is decomposable for each $\alpha \in \Lambda$. This means that for each $\alpha \in \Lambda$, there is a family $\big \{ S_{\alpha, p} \in \mathcal{B} \big ( \mathcal{H}_{\alpha, p} \big ) \big \}_{p \in X_\alpha}$ of bounded operators such that 
\begin{equation*}
V_\alpha T V_\alpha^\ast = \int^\oplus_{X_\alpha} S_{\alpha, p} \, \mathrm{d} \mu_\alpha.
\end{equation*}
Further, we claim that $S_{\beta, p} \big |_{\mathcal{H}_{\alpha, p}} = S_{\alpha, p}$ for almost every $p \in X_\alpha$, whenever $\alpha \leq \beta$. Suppose there exists a measurable set $E_{\alpha, \beta} \subseteq X_\alpha$ such that $\mu_\alpha(E_{\alpha, \beta}) > 0$ and $S_{\beta, p} \big |_{\mathcal{H}_{\alpha, p}} \neq S_{\alpha, p}$ for every $p \in E_{\alpha, \beta}$. This implies that there exists a family $\big \{ \xi_{\alpha, p} \in \mathcal{H}_{\alpha, p} \big \}_{p \in E_{\alpha, \beta}}$ of vectors such that 
\begin{equation*}
S_{\beta, p}(\xi_{\alpha, p}) \neq S_{\alpha, p}(\xi_{\alpha, p})
\end{equation*}
for every $p \in E_{\alpha, \beta}$. Without loss of generality, we assume that the family $\big \{ \xi_{\alpha, p}  \in \mathcal{H}_{\alpha, p} \big \}_{p \in E_{\alpha, \beta}}$ consists of unit vectors. Now consider a family $\big \{ \hat{\xi}_{\alpha, p} \in \mathcal{H}_{\alpha, p} \big \}_{p \in X_\alpha}$ of vectors, where
\begin{equation*}
\hat{\xi}_{\alpha, p} = \begin{cases}
\xi_{\alpha, p}, & \text{if} \;\; p \in E_{\alpha, \beta}; \\
& \\
0_{\mathcal{H}_{\alpha, p}} & \text{if} \;\; p \in X_\alpha \setminus E_{\alpha, \beta}.
\end{cases}
\end{equation*}
Let $y \in \int^\oplus_{X_\alpha} \mathcal{H}_{\alpha, p} \, \mathrm{d} \mu_\alpha.$ Then the map $p \mapsto \|y(p)\|$ is in $L^{2}(X_{\alpha}, \mu_{\alpha})$ and so, it is in $L^{1}(X_{\alpha}, \mu_{\alpha})$ since $\mu_{\alpha}$ is a finite measure. It follows that
\begin{align*}
\int_{X_\alpha} \; \big | \la \hat{\xi}_{\alpha, p}, y(p)  \ra \big| \; \mathrm{d} \mu_\alpha(p) &\leq \int\limits_{E_{\alpha, \beta}} \; \big \| \xi_{\alpha, p} \big \| \; \big \| y(p) \big \| \; \mathrm{d} \mu_\alpha(p) \\
&= \int\limits_{E_{\alpha, \beta}} \; \big \| y(p) \big \|  \; \mathrm{d} \mu_\alpha(p) \\
&< \infty.
\end{align*}
As $y \in \int^\oplus_{X_\alpha} \mathcal{H}_{\alpha, p} \, \mathrm{d} \mu_\alpha$ was chosen arbitrarily, by following (2) of Definition \ref{def;dihs}, there exists $x \in \int^\oplus_{X_\alpha} \mathcal{H}_{\alpha, p} \, \mathrm{d} \mu_\alpha$ such that $x(p) = \hat{\xi}_{\alpha, p}$, for almost every $p \in X_\alpha$. Finally, by using Equation \eqref{eq; dilation} and Equation \eqref{eq; V beta J beta alpha v alpha star}, we get
\begin{align*}
\int^\oplus_{X_\alpha} S_{\alpha, p} x(p) \, \mathrm{d} \mu_\alpha &= \big ( V_\alpha T V_\alpha^\ast \big ) (x)\\
&= \big ( V_\alpha J^\ast_{\beta, \alpha} V_\beta^\ast \big ) \big ( V_\beta T V_\beta^\ast \big ) \big ( V_\beta J_{\beta, \alpha} V_\alpha^\ast \big ) (x)\\
&= \big ( V_\alpha J^\ast_{\beta, \alpha} V_\beta^\ast \big ) \big ( V_\beta T V_\beta^\ast \big ) \left ( \int^\oplus_{X_\beta} \big ( V_\beta J_{\beta, \alpha} V_\alpha^\ast \big ) (x) (p) \, \mathrm{d} \mu_\beta \right) \\
&= \big ( V_\alpha J^\ast_{\beta, \alpha} V_\beta^\ast \big ) \left ( \int^\oplus_{X_\beta} S_{\beta, p} \big ( V_\beta J_{\beta, \alpha} V_\alpha^\ast \big ) (x) (p) \, \mathrm{d} \mu_\beta \right) \\
&= \int^\oplus_{X_\alpha} S_{\beta, p} \big ( V_\beta J_{\beta, \alpha} V_\alpha^\ast \big ) (x) (p) \, \mathrm{d} \mu_\alpha \\
&= \int^\oplus_{X_\alpha} S_{\beta, p} x(p) \, \mathrm{d} \mu_\alpha.
\end{align*}
Thus $S_{\beta, p} x(p) = S_{\alpha, p} x(p)$ for almost every $p \in X_\alpha$. In particular, for almost every $p \in E_{\alpha, \beta}$, we have
\begin{equation*}
S_{\beta, p}(\hat{\xi}_{\alpha, p}) = S_{\beta, p}(\xi_{\alpha, p}) = S_{\beta, p}(x(p)) =  S_{\alpha, p}(x(p)) = S_{\alpha, p}(\xi_{\alpha, p}) = S_{\alpha, p}(\hat{\xi}_{\alpha, p})
\end{equation*}
This is a contradiction. This implies that $\mu_\alpha(E_{\alpha, \beta}) = 0$ and hence
\begin{equation*}
S_{\beta, p} \big |_{\mathcal{H}_{\alpha, p}} = S_{\alpha, p}
\end{equation*}
for almost every $p \in X_\alpha$, whenever $\alpha \leq \beta$.

\noindent
Proof of (1):Suppose $\Lambda$ is countable, then the set $E := \bigcup\limits_{\alpha, \beta \in \Lambda} E_{\alpha, \beta}$ (it is possible that $\alpha$ and $\beta$ are not comparable, in that case, consider $E_{\alpha, \beta} = \emptyset$) is a measurable set with $\mu(E) = 0$. Now consider the family $\big \{ S_{\alpha, p} \; : \; \alpha \in \Lambda, \, p \in X \setminus \bigcup\limits_{\alpha, \beta \in \Lambda} E_{ \alpha, \beta} \big\}$ of bounded operators. For each fixed $p \in X \setminus \bigcup\limits_{\alpha, \beta \in \Lambda} E_{\alpha, \beta}$, the family $\big \{  S_{\alpha, p}  \big \}_{\alpha \in \Lambda}$ is such that $S_{\beta, p} \big |_{\mathcal{H}_{\alpha, p}} = S_{\alpha, p}$, whenever $\alpha \leq \beta$. This yields a locally bounded operator $T_p : \mathcal{D}_p \rightarrow \mathcal{D}_p$ given by
\begin{align*}
T_p := \varprojlim\limits_{\alpha \in \Lambda} S_{\alpha, p}.
\end{align*}

Let $u \in \displaystyle \dilX \mathcal{D}_p \, \dmu$ be such that $u \in \mathcal{H}_\alpha$ for some $\alpha \in \Lambda$. This implies $Tu \in \mathcal{H}_\alpha$. Therefore, for almost every $p \in X \setminus X_\alpha$, we have $u(p) = Tu(p) = 0_{\mathcal{D}_p}$ and for almost every $p \in X_\alpha$, we get
\begin{equation*}
\big (Tu \big )(p) = \big (V_\alpha T u \big )(p) = \big (V_\alpha T V^\ast_\alpha \big )\big (V_\alpha u \big )(p) = S_{\alpha, p} u(p) = T_p u(p).
\end{equation*}
Since $u \in \displaystyle \dilX \mathcal{D}_p \, \dmu$ was chosen arbitrarily, we obtain 
\begin{align*}
(Tu)(p) = T_pu(p) 
\end{align*}
for every $u \in \displaystyle \dilX \mathcal{D}_p \, \dmu$ for almost every $p \in X$. This proves 
\begin{align*}
T = \dilX T_p \, \dmu.
\end{align*}
Since $T \in (\mathcal{M}_{\text{DIAG}})^\prime$ was chosen arbitrarily, we obtain $(\mathcal{M}_{\text{DIAG}})^\prime \subseteq \mathcal{M}_{\text{DEC}}$ and hence $\mathcal{M}_{\text{DEC}} =(\mathcal{M}_{\text{DIAG}})^\prime$.

\vspace{10pt} 
\noindent
Proof of (2): Now assume that $\mu$ is a counting measure on $X$. This gives, for every $p \in X_\alpha$, we have $S_{\beta, p} \big |_{\mathcal{H}_{\alpha, p}} = S_{\alpha, p}$whenever $\alpha \leq \beta$. Consider the family $\big \{ S_{\alpha, p} \; : \; \alpha \in \Lambda, \, p \in X \big\}$ of bounded operators. For each fixed $p \in X$, the family $\big \{  S_{\alpha, p}  \big \}_{\alpha \in \Lambda}$ is such that $S_{\beta, p} \big |_{\mathcal{H}_{\alpha, p}} = S_{\alpha, p}$, whenever $\alpha \leq \beta$. This yields a locally bounded operator $T_p : \mathcal{D}_p \rightarrow \mathcal{D}_p$ given by
\begin{align*}
T_p := \varprojlim\limits_{\alpha \in \Lambda} S_{\alpha, p} 
\end{align*}

Let $u \in \displaystyle \dilX \mathcal{D}_p \; \dmu$ then $u \in \mathcal{H}_\alpha$ for some $\alpha \in \Lambda$ and $Tu \in \mathcal{H}_\alpha$ since $T$ is a locally bounded operator. Therefore, for each $p \in X \setminus X_\alpha$, we have $u(p) = Tu(p) = 0_{\mathcal{D}_p}$ and for every $p \in X_\alpha$, we get
\begin{equation*}
\big (Tu \big )(p) = \big (V_\alpha T u \big )(p) = \big (V_\alpha T V^\ast_\alpha \big )\big (V_\alpha u \big )(p) = S_{\alpha, p} u(p) = T_p u(p).
\end{equation*}
Since $u \in \displaystyle \dilX \mathcal{D}_p \; \dmu$ was chosen arbitrarily, we obtain 
\begin{align*}
(Tu)(p) = T_pu(p) 
\end{align*}
for every $u \in \displaystyle \dilX \mathcal{D}_p \; \dmu$.
Hence
\begin{align*}
T = \dilX T_p \, \dmu.
\end{align*}
Since $T \in (\mathcal{M}_{\text{DIAG}})^\prime$ was chosen arbitrarily, we obtain $(\mathcal{M}_{\text{DIAG}})^\prime \subseteq \mathcal{M}_{\text{DEC}}$ and hence $\mathcal{M}_{\text{DEC}} =(\mathcal{M}_{\text{DIAG}})^\prime$.
\end{proof}

\section{Disintegration of a locally Hilbert space} \label{sec; Disintegration}

We have seen in Section \ref{sec; Direct integrals} that for a family $\big \{ \mathcal{D}_p \big \}_{p \in X}$ of locally Hilbert spaces and $\big( X, \Sigma, \mu \big)$ is a locally standard measure space (see Definition \ref{def; lsms}), the direct integral $\displaystyle \dilX \mathcal{D}_p \, \dmu$ is a locally Hilbert space such that $\mathcal{M}_{\text{DIAG}}$ \big (it is the collection of all diagonalizable locally bounded operators on $\displaystyle \dilX \mathcal{D}_p \, \dmu$ \big ) is an abelian locally von Neumann algebra (see Theorem \ref{thm;DEC and DIAG LvNA}). 

For a locally standard measure space $\big( X, \Sigma, \mu \big)$, we define a new class named ``locally essentially bounded measurable functions on $X$'' as, 
\begin{equation}\label{eq; EB loc}
\text{EB}_\text{loc} \big( X, \Sigma, \mu \big) := \Big \{ f : X \rightarrow \mathbb{C} \; \; : \; \; f \; \text{is measurable}, \; \;  f \big|_{X_\alpha} \in \text{L}^\infty \big ( X_\alpha, \mu_\alpha \big) \; \; \text{for each} \; \alpha \in \Lambda \Big \}.
\end{equation}
By following (\ref{def;Diag(lbo)}) of Definition \ref{def;DecDiag(lbo)} and using (\ref{obs; description of V alpha T V alpha star}) of Observations III, there is a one-to-one correspondence between $\text{EB}_\text{loc} \big( X, \Sigma, \mu \big)$ and $\mathcal{M}_{\text{DIAG}}$. This shows that $\text{EB}_\text{loc} \big( X, \Sigma, \mu \big)$ is an abelian locally von Neumann algebra.  

In this section, our aim is to prove the converse. Firstly, let us understand the notion of the converse in case of locally Hilbert spaces. Suppose $\mathcal{D}$ is the locally Hilbert space with a strictly inductive system $\mathcal{E} = \big \{ \mathcal{H}_\alpha \big \}_{\alpha \in \Lambda}$ is identified (through a bijective isometry) with the direct integral of some family of locally Hilbert spaces, then the associated locally standard measure space corresponds (via locally essentially bounded measurable functions) to an abelian locally von Neumann algebra in $C^\ast_\mathcal{E}(\mathcal{D})$. So, every identification corresponds to some abelian locally von Neumann algebra in $C^\ast_\mathcal{E}(\mathcal{D})$. In view of this, the converse question is framed as follows:

\textit{``Given a locally Hilbert space $\mathcal{D}$ with a strictly inductive system $\mathcal{E} = \big \{ \mathcal{H}_\alpha \big \}_{\alpha \in \Lambda}$ and an abelian locally von Neumann algebra $ \mathcal{M} \subseteq C^\ast_\mathcal{E}(\mathcal{D})$, does there exist a locally standard measure space $\big( X, \Sigma, \mu \big)$ and a family $\big \{ \mathcal{D}_p \big \}_{p \in X}$ of locally Hilbert spaces such that}
\begin{equation*}
\mathcal{D} = \displaystyle \dilX \mathcal{D}_p \, \dmu \; \; \text{and} \; \; \mathcal{M} = \mathcal{M}_\text{DIAG} ?
\end{equation*}

We answer this question in the case of Fr\'echet space with the assumption of an extra condition. We write the condition here. \\

\noindent
\textbf{Condition I:} An abelian locally von Neumann algebra $\mathcal{M} \subseteq C^\ast_\mathcal{E}(\mathcal{D})$ is of the form $\mathcal{M} = \varprojlim\limits_{n \in \mathbb{N}} \mathcal{M}_n$, where $\mathcal{M}_n$ is isomorphic to $\text{L}^\infty \big ( X_n, \Sigma_n, \mu_n \big )$ for every $n \in \mathbb{N}$, the family $\big \{ \big (X_n, \Sigma_n \big ) \big \}_{n \in \mathbb{N}}$  forms a strictly inductive system of measurable spaces along with a projective system of finite measures $\big \{\mu_n \big \}_{n \in \mathbb{N}}$.


\begin{theorem} \label{thm;dlhs}
Let $\mathcal{D}$ be a locally Hilbert space which is the inductive limit of the strictly inductive system $\mathcal{E}=\{\mathcal{K}_{n}\}_{n \in \mathbb{N}}$ of Hilbert spaces. If $\mathcal{M} = \varprojlim\limits_{n \in \mathbb{N}} \mathcal{M}_n$ is an abelian locally von Neumann algebra in $C^\ast_\mathcal{E}(\mathcal{D})$ satisfying \textbf{Condition I}, then there exists
\begin{enumerate}
    \item[(i)] a locally standard measure space $\big (X, \Sigma, \mu \big )$;
    \item[(ii)] a family  $\big \{ \mathcal{D}_p \big \}_{p \in X}$ of locally Hilbert spaces 
\end{enumerate}
such that  $\mathcal{D}$ is isomorphic to $\displaystyle \dilX \mathcal{D}_p \, \dmu$ and the locally von Neumann algebra $\mathcal{M}$ is isomorphic to the locally von Neumann algebra of all daigonalizable operators on $\displaystyle \dilX \mathcal{D}_p \, \dmu$.
\end{theorem}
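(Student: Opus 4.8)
The plan is to build the disintegration level by level along the countable filtration and then glue, using the classical disintegration theorem (Theorem \ref{thm;dihs}) on each Hilbert space $\mathcal{K}_n$ together with the projective-limit bookkeeping that made Theorem \ref{thm; M DEC = M DIAG Commutant} work. Write $\mathcal{M} = \varprojlim_n \mathcal{M}_n$ as in \textbf{Condition I}, so that each $\mathcal{M}_n$ is an abelian von Neumann algebra on the separable Hilbert space $\mathcal{K}_n$ with $\mathcal{M}_n \cong \text{L}^\infty(X_n, \Sigma_n, \mu_n)$, and $\{(X_n,\Sigma_n)\}_{n}$, $\{\mu_n\}_n$ form the strictly inductive system of (standard) measurable spaces and the projective system of finite measures of the condition. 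Set $(X,\Sigma,\mu) = \varinjlim_n (X_n,\Sigma_n)$ with $\mu = \varprojlim_n \mu_n$; by Proposition \ref{prop;m} and Definition \ref{def; lsms} this is the desired locally standard measure space, establishing (i).

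For (ii), first apply Theorem \ref{thm;dihs} to the pair $(\mathcal{K}_n, \mathcal{M}_n)$ for each fixed $n$. This yields a family $\{\mathcal{H}_{n,p}\}_{p \in X_n}$ of separable Hilbert spaces and a unitary $W_n : \mathcal{K}_n \to \int^\oplus_{X_n}\mathcal{H}_{n,p}\,\mathrm{d}\mu_n$ carrying $\mathcal{M}_n$ onto the diagonalizable operators $\int^\oplus_{X_n}\mathbb{C}\cdot\mathrm{Id}_{\mathcal{H}_{n,p}}\,\mathrm{d}\mu_n$, after identifying the standard measure space produced by Theorem \ref{thm;dihs} with $(X_n,\mu_n)$ through the measure-algebra isomorphism coming from $\mathcal{M}_n \cong \text{L}^\infty(X_n,\mu_n)$. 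The crux is to make these fibrations nest. Fix $m \le n$. Since every element of $\mathcal{M}_n$ is the restriction to $\mathcal{K}_n$ of a locally bounded operator in $\mathcal{M}$, the subspace $\mathcal{K}_m$ reduces $\mathcal{M}_n$, so the orthogonal projection $P_m$ of $\mathcal{K}_n$ onto $\mathcal{K}_m$ lies in $\mathcal{M}_n^\prime$. By Theorem \ref{thm;DeDibo vNA}, $\mathcal{M}_n^\prime = \int^\oplus_{X_n}\mathcal{B}(\mathcal{H}_{n,p})\,\mathrm{d}\mu_n$, whence $W_n P_m W_n^\ast = \int^\oplus_{X_n} P_{m,p}\,\mathrm{d}\mu_n$ for a measurable field of projections $\{P_{m,p}\}$. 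Because the projective-system map $\phi_{m,n}$ compresses $\mathcal{M}_n$ to $\mathcal{M}_m$ and corresponds, under the identifications, to the restriction $\text{L}^\infty(X_n,\mu_n) \to \text{L}^\infty(X_m,\mu_m)$, $f \mapsto f|_{X_m}$, every function supported in $X_n \setminus X_m$ annihilates $\mathcal{K}_m$; this forces $P_{m,p} = 0$ for almost every $p \in X_n \setminus X_m$. Setting $\mathcal{H}_{m,p} := P_{m,p}\mathcal{H}_{n,p}$ for $p \in X_m$ then realizes the disintegration of $(\mathcal{K}_m,\mathcal{M}_m)$, so by the almost-everywhere uniqueness of disintegration it agrees with the fibration from the previous step up to a $\mu_m$-null set, and the inclusion $\mathcal{H}_{m,p}\subseteq\mathcal{H}_{n,p}$ is isometric. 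For a triple $\ell \le m \le n$ the inclusion $\mathcal{K}_\ell \subseteq \mathcal{K}_m$ gives $P_\ell \le P_m$, hence $P_{\ell,p}\le P_{m,p}$ and $\mathcal{H}_{\ell,p}\subseteq\mathcal{H}_{m,p}$ almost everywhere.

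Since $\Lambda = \mathbb{N}$ is countable, only countably many such almost-everywhere identities are involved; their exceptional sets union to a single $\mu$-null set $N$, exactly as in the proof of Part (1) of Theorem \ref{thm; M DEC = M DIAG Commutant}. Redefining every fiber over $N$ to be $\{0\}$, we obtain for each $p \in X$ a genuine strictly inductive system $\{\mathcal{H}_{n,p}\}_{n\in\mathbb{N}}$ of separable Hilbert spaces (with $\mathcal{H}_{n,p} := \{0\}$ when $p \notin X_n$), and we set $\mathcal{D}_p := \varinjlim_n \mathcal{H}_{n,p}$, giving (ii). Forming $\dilX \mathcal{D}_p\,\dmu$ as in Definition \ref{Defn: directint_loc}, Proposition \ref{prop;dilhs} exhibits it as the inductive limit of the system $\{\mathcal{H}_\alpha\}$ with $\mathcal{H}_n \cong \int^\oplus_{X_n}\mathcal{H}_{n,p}\,\mathrm{d}\mu_n$. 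The compatibility of the $P_{m,p}$ established above shows $W_n|_{\mathcal{K}_m} = W_m$ under these identifications, so the unitaries glue to an isomorphism $W = \varinjlim_n W_n : \mathcal{D} \to \dilX \mathcal{D}_p\,\dmu$. Finally, each $W_n$ carries $\mathcal{M}_n$ to the diagonalizable operators on $\mathcal{H}_n$, so by the projective-limit descriptions in Theorem \ref{thm;DEC and DIAG LvNA}(b) the isomorphism $W$ carries $\mathcal{M} = \varprojlim_n \mathcal{M}_n$ onto $\mathcal{M}_{\text{DIAG}} = \varprojlim_n \int^\oplus_{X_n}\mathbb{C}\cdot\mathrm{Id}_{\mathcal{H}_{n,p}}\,\mathrm{d}\mu_n$, completing the proof. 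The main obstacle is precisely this compatibility step: the classical disintegration is canonical only up to null sets and involves arbitrary choices, so the real content is forcing the fibers to nest — achieved through the decomposability of the reducing projections $P_m$, the identification of $\phi_{m,n}$ with restriction of $\text{L}^\infty$, and the countability of the index set to absorb the accumulated null sets.
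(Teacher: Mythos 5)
Your proposal is correct in outline but takes a genuinely different route from the paper. You disintegrate each pair $(\mathcal{K}_n,\mathcal{M}_n)$ separately via the classical Theorem \ref{thm;dihs} and then force the resulting fibrations to nest, using the decomposability of the reducing projections $P_m$ inside $\mathcal{M}_n^{\prime}=\int^\oplus_{X_n}\mathcal{B}(\mathcal{H}_{n,p})\,\mathrm{d}\mu_n$ together with the almost-everywhere uniqueness of disintegration. The paper never invokes either of these: it builds the fibers from scratch. It fixes a countable dense subspace $\widetilde{\mathcal{Z}}$ of $\mathcal{D}$ closed under rational-complex linear combinations and under the projections $P_n$, forms for each pair $\xi_j,\xi_k$ the measure $\mu^{\xi_j,\xi_k}_m\ll\mu_m$ given by $f\mapsto\langle P_m\xi_j,\widehat{\Gamma}_m(f)P_m\xi_k\rangle$, takes Radon--Nikodym derivatives to obtain a positive hermitian sesquilinear form $\phi_p$ on $\widetilde{\mathcal{Z}}$ for almost every $p$, and defines $\mathcal{H}_{n,p}$ as the closed span of the classes $(P_mh_j-P_{m-1}h_j)+\mathcal{N}_p$, $h_j\in\mathcal{Z}_n$, inside the single completion $\mathcal{H}_p$ (Lemma \ref{lem;flhs}); the nesting $\mathcal{H}_{n,p}\subseteq\mathcal{H}_{r,p}$ is then literal by construction, and the unitaries $W_n$ are written down explicitly and checked to be isometric and surjective by direct computation (Lemmas \ref{lem;i}--\ref{lem;s}). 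Your route buys brevity and lets the classical theorem do the heavy lifting once per level; the paper's route buys independence from the uniqueness theorem for disintegrations (nowhere stated in the paper) and from any point realization of the abstract isomorphism $\mathcal{M}_n\cong\text{L}^\infty(X_n,\Sigma_n,\mu_n)$.

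Two soft spots in your patching step need repair before the argument is complete. First, ``agrees with the fibration from the previous step up to a $\mu_m$-null set'' overstates what uniqueness gives: two disintegrations of $(\mathcal{K}_m,\mathcal{M}_m)$ are only almost everywhere unitarily equivalent via a measurable field of unitaries, not equal off a null set. Your actual move --- redefining $\mathcal{H}_{m,p}:=P_{m,p}\mathcal{H}_{n,p}$ --- avoids needing equality, but then the fiber at level $m$ depends on the ambient level $n$, and you must chain the identifications coherently over all of $\mathbb{N}$; the clean fix is to take the embedding $\mathcal{H}_{n,p}\hookrightarrow\mathcal{H}_{n+1,p}$ to be the fiber of the decomposable isometry $W_{n+1}\big|_{\mathcal{K}_n}W_n^{\ast}$, so that compatibility over triples holds almost everywhere automatically, and then replace the resulting almost-everywhere-defined isometric embeddings by honest inclusions, as Definition \ref{def;sis} requires. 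Second, Theorem \ref{thm;dihs} produces a standard measure space of its own choosing; transporting the direct integral to the prescribed $(X_n,\Sigma_n,\mu_n)$ requires realizing the von Neumann algebra isomorphism $\mathcal{M}_n\cong\text{L}^\infty(X_n,\mu_n)$ by a point map modulo null sets, which is valid for standard measure spaces but is an additional classical input you should cite. With these two standard repairs your argument goes through.
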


We prove the following five intermediate lemmas to construct the proof of the main theorem. 

\begin{lemma} \label{lem;flhs}
Let  $\big \{ \mathcal{K}_n \big \}_{n \in \mathbb{N}}$ be a strictly inductive system of Hilbert spaces and $\mathcal{D} = \bigcup\limits_{n \in \mathbb{N}} \mathcal{K}_n$ be a locally Hilbert space. If $\mathcal{M}$ is an abelian locally von Neumann algebra satisfying the \textbf{Condition I}, then there exists a locally standard measure space $\big (X, \Sigma, \mu \big )$ and a family $\big \{ \mathcal{D}_p \big \}_{p \in X}$ of locally Hilbert spaces.
\end{lemma}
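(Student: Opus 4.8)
The plan is to read the locally standard measure space straight off \textbf{Condition I}, and to build the fibre locally Hilbert spaces by disintegrating the Hilbert spaces $\mathcal{K}_n$ one index at a time, arranging the disintegrations to be mutually compatible.

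\emph{Construction of the measure space.} By \textbf{Condition I}, the family $\{(X_n,\Sigma_n)\}_{n\in\mathbb{N}}$ is a strictly inductive system of measurable spaces and $\{\mu_n\}_{n\in\mathbb{N}}$ is a projective system of finite measures. Hence Proposition \ref{prop;sa} together with \ref{obs; 1,4} of Observations I gives the inductive limit $(X,\Sigma)=\varinjlim_{n}(X_n,\Sigma_n)$, and Proposition \ref{prop;m} gives the projective limit $\mu=\varprojlim_{n}\mu_n$. Since each $\mathcal{M}_n\cong\mathrm{L}^\infty(X_n,\Sigma_n,\mu_n)$ is an abelian von Neumann algebra acting on the \emph{separable} Hilbert space $\mathcal{K}_n$, the measure space $(X_n,\Sigma_n,\mu_n)$ can be taken to be standard; by Definition \ref{def; lsms}, $(X,\Sigma,\mu)$ is then a locally standard measure space, which settles (i).

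\emph{Construction of the fibres.} For each $n$ I would apply the classical disintegration theorem (Theorem \ref{thm;dihs}) to $\mathcal{K}_n$ and $\mathcal{M}_n$, obtaining a family $\{\mathcal{H}_{n,p}\}_{p\in X_n}$ of separable Hilbert spaces and a unitary $\mathcal{K}_n\cong\int^\oplus_{X_n}\mathcal{H}_{n,p}\,\mathrm{d}\mu_n$ carrying $\mathcal{M}_n$ onto the diagonalizable operators; the underlying standard measure space is $(X_n,\mu_n)$ because $\mathcal{M}_n\cong\mathrm{L}^\infty(X_n,\mu_n)$. These disintegrations must be chosen inductively on $n$ so that the fibres nest. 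The mechanism is this: by Definition \ref{def;lbo} every element of $\mathcal{M}$ is locally bounded, so $\mathcal{K}_n\subseteq\mathcal{K}_{n+1}$ is a reducing subspace for $\mathcal{M}_{n+1}$, whence the orthogonal projection $P_n$ of $\mathcal{K}_{n+1}$ onto $\mathcal{K}_n$ lies in $(\mathcal{M}_{n+1})'$. By the classical Theorem \ref{thm;DeDibo vNA} this commutant is exactly the algebra of decomposable operators, so $P_n=\int^\oplus_{X_{n+1}}P_{n,p}\,\mathrm{d}\mu_{n+1}$ for a measurable field of projections $P_{n,p}$ on $\mathcal{H}_{n+1,p}$. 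Because the connecting map $\mathcal{M}_{n+1}\to\mathcal{M}_n$ of $\mathcal{M}=\varprojlim_n\mathcal{M}_n$ corresponds, under the identifications of \textbf{Condition I}, to the restriction $f\mapsto f\big|_{X_n}$ (using $X_n\subseteq X_{n+1}$ and $\mu_n=\mu_{n+1}\big|_{X_n}$), every $f\in\mathrm{L}^\infty(X_{n+1},\mu_{n+1})$ supported in $X_{n+1}\setminus X_n$ acts as $0$ on $\mathcal{K}_n$; applying this to $f=\up{\chi}_{X_{n+1}\setminus X_n}$ forces $P_{n,p}=0_{\mathcal{H}_{n+1,p}}$ for a.e.\ $p\in X_{n+1}\setminus X_n$. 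I may therefore identify $\mathcal{H}_{n,p}$ with the closed subspace $P_{n,p}\mathcal{H}_{n+1,p}\subseteq\mathcal{H}_{n+1,p}$ for a.e.\ $p\in X_n$, so that the inclusion $\mathcal{H}_{n,p}\hookrightarrow\mathcal{H}_{n+1,p}$ is isometric. Setting $\mathcal{H}_{n,p}:=\{0\}$ for $p\notin X_n$, discarding the (countable, hence $\mu$-null) union of the exceptional sets, and putting $\mathcal{D}_p:=\varinjlim_n\mathcal{H}_{n,p}=\bigcup_n\mathcal{H}_{n,p}$ off this null set (and $\mathcal{D}_p:=\{0\}$ on it), Definition \ref{def;lhs} shows each $\mathcal{D}_p$ is a locally Hilbert space, which settles (ii).

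The main obstacle is precisely the coherence in the last step: the classical disintegration of each $\mathcal{K}_n$ is canonical only up to isomorphism and up to $\mu_n$-null modifications, so the delicate point is to select the unitaries inductively so that all the fibrewise inclusions $\mathcal{H}_{n,p}\hookrightarrow\mathcal{H}_{n+1,p}$ hold simultaneously for almost every $p$. This is where the Fr\'echet hypothesis $\Lambda=\mathbb{N}$ is essential: the index set is countable and totally ordered, so only the consecutive reducing relations $\mathcal{K}_n\subseteq\mathcal{K}_{n+1}$ need to be matched, and the countably many null sets that are removed cause no loss.
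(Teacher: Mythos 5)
Your construction of $(X,\Sigma,\mu)$ is exactly the paper's: both read the strictly inductive system of measurable spaces and the projective system of measures off \textbf{Condition I} and invoke Proposition \ref{prop;sa}, Observations I and Proposition \ref{prop;m}. For the fibres, however, you take a genuinely different route. The paper does not nest the fibres produced by applying Theorem \ref{thm;dihs} to each $\mathcal{K}_n$; it only uses that theorem to justify extending the Gelfand isomorphism to $\widehat{\Gamma}_m:\mathrm{L}^\infty(X_m,\Sigma_m,\mu_m)\to\mathcal{M}_m$, and then builds the fibres in one shot by a GNS-type construction: it fixes a single countable dense subset $\widetilde{\mathcal{Z}}\subseteq\mathcal{D}$ (enriched with all projections $P_nh_j$ and closed under rational linear combinations), takes the Radon--Nikodym derivatives $f^{\xi_j,\xi_k}_m$ of the measures representing $f\mapsto\langle P_m\xi_j,\widehat{\Gamma}_m(f)P_m\xi_k\rangle$, defines the sesquilinear form $\phi_p(\xi_j,\xi_k):=f^{\xi_j,\xi_k}_m(p)$ at the minimal $m$ with $p\in X_m$, and obtains $\mathcal{H}_p$ by quotient and completion, with $\mathcal{H}_{n,p}$ the closed span of the classes of $(P_ih_j-P_{i-1}h_j)$ for $h_j\in\mathcal{Z}_n$. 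The nesting $\mathcal{H}_{n,p}\subseteq\mathcal{H}_{r,p}$ is then automatic from $\mathcal{Z}_n\subseteq\mathcal{Z}_r$, at the cost of having to verify sesquilinearity, positivity and hermitianity of $\phi_p$ off countably many null sets. Your approach buys the classical disintegration theorem as a black box; the paper's buys coherence for free.

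The gap in your proposal is precisely the step you yourself flag as the main obstacle, and your stated resolution does not close it. Each identification of $\mathcal{H}_{n,p}$ with $P_{n,p}\mathcal{H}_{n+1,p}$ is implemented by a measurable field of unitaries defined only almost everywhere and only up to such a field (essential uniqueness of disintegration), and the identification made at stage $n+1$ replaces $\mathcal{H}_{n+1,p}$ by a different, merely unitarily equivalent, space --- thereby disturbing the subspace $\mathcal{H}_{n,p}\subseteq\mathcal{H}_{n+1,p}$ you fixed at stage $n$. Since there is no maximal index, every fibre gets re-identified infinitely often, so ``only the consecutive relations need to be matched'' handles the null-set bookkeeping but not the actual selection problem. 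This can be repaired --- for the literal statement of the lemma it suffices to retain only the isometric embeddings $\mathcal{H}_{n,p}\hookrightarrow\mathcal{H}_{n+1,p}$ and form the abstract inductive limit of that system, which is a locally Hilbert space after identifying each space with its image --- but you would then need to check that these abstractly assembled fibres still support the unitaries $W_n:\mathcal{K}_n\to\mathcal{H}_n$ required in the sequel (Lemmas \ref{lem;i}--\ref{lem;s}), which is exactly the compatibility the paper's single global construction is designed to guarantee. As written, the proposal asserts rather than proves the coherent choice, so the argument is incomplete at its decisive point.
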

\begin{proof}
Let $\mathcal{K}$ be the Hilbert space obtained by completing the locally Hilbert space $\mathcal{D}$. For a fixed $n \in \mathbb{N}$, let $P_n : \mathcal{K} \rightarrow \mathcal{K}_n$ be the projection of the Hilbert space $\mathcal{K}$ onto $\mathcal{K}_n$. Consider $\mathcal{Z} \subseteq \mathcal{D}$, a countable, dense subset such that $\mathcal{Z}$ is closed under linear combinations with rational complex coefficients. Moreover, we get $\mathcal{Z} \cap \mathcal{K}_n$ is dense in $\mathcal{K}_n$ for all $n \in \mathbb{N}$ and $\mathcal{Z} \cap \mathcal{K}_m \subseteq \mathcal{Z} \cap \mathcal{K}_n$ whenever $m \leq n$. We fix the notation $\mathcal{Z}_n = \mathcal{Z} \cap \mathcal{K}_n$ and thus $\mathcal{Z} = \bigcup\limits_{n \in \mathbb{N}} \mathcal{Z}_n$. Next, for each $n \in \mathbb{N}$, consider the set 
\begin{align*}
\mathcal{Z}^\prime_n := \mathcal{Z}_n ~~ \bigcup ~~ \big \{ P_nh_j ~~ : ~~ h_j \in \mathcal{Z}_r, ~~\text{whenever} ~~ n \leq r \big \} 
\end{align*} 
and then define
\begin{align*}
\mathcal{Z}^\prime := \bigcup_{n \in \mathbb{N}} \mathcal{Z}^\prime_n.
\end{align*}
Since for each $n \in \mathbb{N}$, the set $\mathcal{Z}_n$ is countable, we get $\mathcal{Z}^\prime_n$ is also a countable set. This shows $\mathcal{Z}^\prime$ is countable as well. Finally, consider the set $\widetilde{\mathcal{Z}}$ consisting of the collection of all linear combinations of vectors from the set $\mathcal{Z}^\prime$ with rational complex coefficients. This implies $\widetilde{\mathcal{Z}}$ is a also a countable, dense subset of $\mathcal{D}$. We denote the collection of all elements of $\widetilde{\mathcal{Z}}$ by $ \big \{ \xi_j ~~ : ~~ j \in \mathbb{N} \big \}$. 

Given that $\mathcal{D} = \bigcup\limits_{n \in \mathbb{N}} \mathcal{K}_n$ and $\mathcal{M} = \varprojlim\limits_{n} \mathcal{M}_n$ satisfying condition I. That is, there is a measure space $\big ( X_n, \Sigma_n, \mu_n \big )$ such that $\mathcal{M}_n$ is isomorphic to $\text{L}^\infty \big ( X_n, \Sigma_n, \mu_n \big )$ for every $n \in \mathbb{N}$ with the property that the family $\big \{ \big (X_n, \Sigma_n \big ) \big \}_{n \in \mathbb{N}}$  is a strictly inductive system of measurable spaces and the family $\big \{\mu_n \big \}_{n \in \mathbb{N}}$ is a projective system of finite measures. By taking $X = \bigcup\limits_{n \in \mathbb{N}} X_n$, $\Sigma$ as defined in Equation \eqref{eq;sigma algebra} and $\mu$ as defined in Proposition \ref{prop;m}, we get a locally standard measure space $\big ( X, \Sigma, \mu \big )$ (see Definition \ref{def; lsms} ). For a fixed $n \in \mathbb{N}$, we know that $\mathcal{M}_n$ is an abelian von Neumann algebra in $\mathcal{B}(\mathcal{K}_n)$. So, let $\mathcal{A}_n$ be a dense C*-algebra contained in $\mathcal{M}_n$, and let $\Gamma_n : C(X_n) \rightarrow \mathcal{A}_n$ be the Gelfand isomorphism.  By the notion of disintegration of a separable Hilbert space (see Theorem \ref{thm;dihs}), we obtain a family $\{ \mathcal{K}_{n, p} \}_{p \in X_n}$ of separable Hilbert spaces with respect to the abelian von Neumann subalgebra 
$\mathcal{M}_n$ such that the Hilbert space $\mathcal{K}_n$ is isomorphic to the direct integral of $\{ \mathcal{K}_{n, p} \}_{p \in X_n}$. That is
\begin{align*}
\mathcal{K}_n \cong \int^\oplus_{X_n} \mathcal{K}_{n, p} \, \mathrm{d} \mu_n(p).
\end{align*}
For a fixed $m \in \mathbb{N}$ and $\xi_j, \xi_k \in \widetilde{V}$ define a linear functional $\psi_m^{\xi_j, \xi_k} : C(X_m) \rightarrow \mathbb{C}$ by
\begin{align*}
\psi_m^{\xi_j, \xi_k}(f) := \big \langle P_m\xi_j, \Gamma_m(f)P_m\xi_k \big \rangle
\end{align*} 
Since $\psi_m^{\xi_j, \xi_k}$ is a bounded linear functional, then by Riesz-representation theorem, there exists a measure, denoted by $\mu^{\xi_j, \xi_k}_{m}$ on $X_m$ satisfying, $\mu^{\xi_j, \xi_k}_{m} \ll \mu_m$ and 
\begin{equation*}
    \psi_m^{\xi_j, \xi_k}(f) = \big \langle P_m\xi_j, \Gamma_m(f)P_m\xi_k \big \rangle = \int\limits_{X_{m}} f\; d\mu_{m}^{\xi_{j}, \xi_{k}},\; \text{for every}\; f\in C(X_{m}).
\end{equation*}
Further, by using \cite[Part I,  Chapter 7, Proposition 1]{DixV} and the facts that the $C^\ast$-algebras $C(X_m)$ and $\mathcal{A}_m$ are dense in the von Neumann algebras $\text{L}^\infty \big ( X_m, \Sigma_m, \mu_m \big )$ and 
$\mathcal{M}_n$ respectively, the Gelfand isomorphism $\Gamma_m$ can be uniquely extended to a bijective homomorphism $\widehat{\Gamma}_m : \text{L}^\infty \big ( X_m, \Sigma_m, \mu_m \big ) \rightarrow \mathcal{M}_m$ such that
\begin{equation*}
\big \langle P_m\xi_j, \widehat{\Gamma}_m(f)P_m\xi_k \big \rangle = \int\limits_{X_{m}} f\; d\mu_{m}^{\xi_{j}, \xi_{k}},\; \; \text{for every}\; f\in \text{L}^\infty \big ( X_m, \Sigma_m, \mu_m \big ).
\end{equation*}
By using the fact that $\mu^{\xi_j, \xi_k}_{m} \ll \mu_m$ there is a unique function (the Radon--Nikodym derivative) $f^{\xi_j, \xi_k}_{m} \in \text{L}^1 \big (X_m, \mu_m \big )$  such that for any $E \in \Sigma_m$, we get
\begin{align*}
\mu^{\xi_j, \xi_k}_{m}(E) = \int_{E} f^{\xi_j, \xi_k}_{m} \, \mathrm{d} \mu_m. 
\end{align*}
It is clear that the function $f^{\xi_j, \xi_k}_{m}$ is defined almost everywhere. Since $\widetilde{\mathcal{Z}}$ is countable, there are countably many such functions. Consequently, we remove the measure zero set from $X_m$ corresponding to each such function $f^{\xi_j, \xi_k}_{m}$, where it is not defined. However, without loss of generality, we continue to work with the modified set $X_m$, and thus, $X$. 

Next, fix $p \in X$. We can choose the smallest $m \in \mathbb{N}$ such that $p \in X_m$ (i.e. $p \notin X_l$ for any $l < m$). Further, we define the map $\phi_p : \widetilde{\mathcal{Z}} \times \widetilde{\mathcal{Z}} \rightarrow \mathbb{C}$ by
\begin{equation} \label{eq; sesquilinear form}
\phi_p (\xi_j, \xi_k) := f^{\xi_j, \xi_k}_{m}(p).
\end{equation} 
For arbitrarily fixed $\xi_j, \xi_k, \xi_l \in \widetilde{\mathcal{Z}}$, $\lambda \in \mathbb{C}$ and $E \in \Sigma_m$, observe that
\begin{align*}
\int_{E} f^{\xi_j, \;  \lambda \xi_k + \xi_l}_{m} \, \mathrm{d} \mu_m = \mu^{\xi_j, \; \lambda \xi_k + \xi_l}_{m}(E) 
&= \int_{X_m} \up{\chi_E} \, \mathrm{d} \mu^{\xi_j, \;  \lambda \xi_k + \xi_l}_m \\
&= \big \langle P_m\xi_j, \; \widehat{\Gamma}_m(\up{\chi_E})P_m(\lambda \xi_k + \xi_l) \big \rangle \\
&= \lambda \big \langle P_m\xi_j, \; \widehat{\Gamma}_m(\up{\chi_E})P_m\xi_k \big \rangle + \big \langle P_m\xi_j, \; \widehat{\Gamma}_m(\up{\chi_E})P_m\xi_l \big \rangle \\
&= \lambda \int_{X_m} \up{\chi_E} \, \mathrm{d} \mu^{\xi_j, \;  \xi_k}_m + \int_{X_m} \up{\chi_E} \, \mathrm{d} \mu^{\xi_j, \;  \xi_l}_m \\
&= \lambda \mu^{\xi_j,  \xi_k}_{m}(E) + \mu^{\xi_j,  \xi_l}_{m}(E) \\
&= \lambda \int_{E} f^{\xi_j, \; \xi_k}_{m} \, \mathrm{d} \mu_m + \int_{E} f^{\xi_j, \; \xi_l}_{m} \, \mathrm{d} \mu_m.
\end{align*}
Since this is true for all $E \in \Sigma_m$, we get 
\begin{equation*}
f^{\xi_j, \; \lambda \xi_k + \xi_l}_{m}(p) = \lambda f^{\xi_j,  \xi_k}_{m}(p) + f^{\xi_j,  \xi_l}_{m}(p)
\end{equation*}
for almost every $p \in X_m$. Thus we have
\begin{equation} \label{eq; sesquilinear form 1}
\phi_p(\xi_j, \lambda \xi_k + \xi_l) = \lambda  \phi_p(\xi_j, \xi_k) +  \phi_p(\xi_j, \xi_l) \; \; \text{for almost every} \; \; p \in X_m.
\end{equation}
Further, for $\xi_j \in \widetilde{\mathcal{Z}}$,
\begin{equation} \label{eq; sesquilinear form 2}
\phi_p(\xi_j, \xi_j) = f^{\xi_j, \xi_j}_{m}(p) \geq 0 \; \; \text{for almost every} \; \; p \in X_m.
\end{equation}
Next, for an arbitrarily fixed $E \in \Sigma_m$, we observe
\begin{align*}
\int_{E} f^{\xi_j, \;  \xi_k}_{m} \, \mathrm{d} \mu_m = \mu^{\xi_j, \; \xi_k}_{m}(E) 
= \int_{X_m} \up{\chi_E} \, \mathrm{d} \mu^{\xi_j, \;  \xi_k}_m 
&= \big \langle P_m\xi_j, \; \widehat{\Gamma}_m(\up{\chi_E})P_m\xi_k \big \rangle \\
&= \overline{\big \langle \widehat{\Gamma}_m(\up{\chi_E})P_m\xi_k,  \; P_m\xi_j \big \rangle} \\
&= \overline{\big \langle P_m\xi_k,  \; \widehat{\Gamma}_m(\up{\chi_E}) P_m\xi_j \big \rangle} \\
&= \overline{\int_{X_m} \up{\chi_E} \, \mathrm{d} \mu^{\xi_k, \;  \xi_j}_m} \\
&= \overline{\mu^{\xi_k, \; \xi_j}_{m}(E)} \\
&= \overline{\int_{E} f^{\xi_k, \;  \xi_j}_{m} \, \mathrm{d} \mu_m}.
\end{align*}
Since this is true for all $E \in \Sigma_m$, we get 
\begin{equation*}
f^{\xi_j, \; \xi_k}_{m}(p) = \overline{f^{\xi_k,  \xi_j}_{m}(p)}
\end{equation*}
for almost every $p \in X_m$. Thus we have
\begin{equation} \label{eq; sesquilinear form 3}
\phi_p(\xi_j, \xi_k) = \overline{\phi_p(\xi_k, \xi_j)} \; \; \text{for almost every} \; \; p \in X_m.
\end{equation}
We remove the measure zero sets from $X_m$ corresponding to Equation \eqref{eq; sesquilinear form 1}, Equation \eqref{eq; sesquilinear form 2} and Equation \eqref{eq; sesquilinear form 3}, where the equalities are not attained. However, note that since $\widetilde{\mathcal{Z}}$ is a countable set, again without loss of generality, we continue to work with the modified set $X_m$, and hence $X$. 
The above discussion shows that the map $\phi_p$ defined in Equation \eqref{eq; sesquilinear form} is a positive hermitian sesquilinear form. For the same $p \in X$, let us consider the set 
\begin{align*}
\mathcal{N}_p = \Big\{ \xi_j \in \widetilde{\mathcal{Z}} \; \; : \; \; \phi_p(\xi_j, \xi_j) =  f^{\xi_j, \xi_j}_{m}(p) = 0 \Big\}.
\end{align*}
We can see that $\mathcal{N}_p$ is same as $\Big\{ \xi_j \in \widetilde{\mathcal{Z}} \; \; : \; \; \phi_p(\xi_j, \xi_k) =  0 \; \text{for all} \; \xi_k \in \widetilde{\mathcal{Z}} \Big\}$. It follows that $\mathcal{N}_p$ is a subspace of $\widetilde{\mathcal{Z}}$ and for $\xi_j, \xi_k \in \widetilde{\mathcal{Z}}$,
\begin{equation*}
\left \langle \xi_j + \mathcal{N}_p, \; \xi_k + \mathcal{N}_p \right \rangle := \phi_p(\xi_j, \xi_k) =  f^{\xi_j, \xi_k}_{m}(p)
\end{equation*}
defines an inner product on the quotient space $\widetilde{\mathcal{Z}} / \mathcal{N}_p$. Let $\mathcal{H}_p$ be the Hilbert space obtained by the completion of the inner product space $\widetilde{\mathcal{Z}} / \mathcal{N}_p$. For each $n \in \mathbb{N}$, consider the subspace $\mathcal{H}_{n, p}$ of $\mathcal{H}_p$ given by
\begin{equation} \label{eq; Hilbert space H n p}
\mathcal{H}_{n, p} := \overline{\text{span}} \big \{ \big (P_m h_j - P_{m-1} h_j \big ) + \mathcal{N}_p \; \;  : \; \; h_j \in \mathcal{Z}_n \big \}.
\end{equation}
Note that if $l < m$, then $\mathcal{H}_{l, p} = \mathcal{N}_p$ and $\mathcal{H}_{n, p} \subseteq \mathcal{H}_{r, p}$, whenever $n \leq r$. This implies that $\big\{ \mathcal{H}_{n, p} \big \}_{n \in \mathbb{N}}$ is a strictly inductive system $\big\{ \mathcal{H}_{n, p} \big \}_{n \in \mathbb{N}}$ of Hilbert spaces. Finally, we define the locally Hilbert space $\mathcal{D}_p := \bigcup\limits_{n \in \mathbb{N}} \mathcal{H}_{n, p}$. Since $p \in X$ was chosen arbitrarily, we obtain a  family  $\{ \mathcal{D}_p \}_{p \in X}$  of locally Hilbert spaces.
\end{proof}

\begin{lemma} \label{lem; condition 2}
The family $\big \{ \widehat{\Gamma}_n : \text{L}^\infty \big ( X_n, \Sigma_n, \mu_n \big ) \rightarrow \mathcal{M}_n \;  :  \; n \in \mathbb{N} \big\}$ of isomorphisms is such that $\widehat{\Gamma}_n(f) \big |_{\mathcal{H}_m} = 0$, whenever $f \big |_{X_m} = 0$ for $1 \leq m \leq n$.
\end{lemma}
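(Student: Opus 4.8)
The plan is to read the asserted vanishing as a compatibility statement between two projective systems: the system $\{\mathcal{M}_n,\pi_{m,n}\}$ underlying $\mathcal{M}=\varprojlim\limits_{n}\mathcal{M}_n$, whose connecting map $\pi_{m,n}\colon\mathcal{M}_n\to\mathcal{M}_m$ is the restriction $S\mapsto S\big|_{\mathcal{K}_m}$, and the system $\{\text{L}^\infty(X_n,\Sigma_n,\mu_n),r_{m,n}\}$ with $r_{m,n}(f)=f\big|_{X_m}$ (here and below $\mathcal{K}_m$ is the $m$-th Hilbert space of $\mathcal{E}$, the space written $\mathcal{H}_m$ in the statement). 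First I would record that $\pi_{m,n}$ is well defined: every $S\in\mathcal{M}_n$ is the restriction to $\mathcal{K}_n$ of a locally bounded operator in $\mathcal{M}\subseteq C^\ast_\mathcal{E}(\mathcal{D})$, so $\mathcal{K}_m$ is a reducing subspace for $S$ and $S\big|_{\mathcal{K}_m}\in\mathcal{M}_m$ is exactly $\pi_{m,n}(S)$; also $r_{m,n}$ is a well-defined surjective normal unital $\ast$-homomorphism since $(X_n,\Sigma_n)$ is a strictly inductive system and $\mu_m=\mu_n\big|_{\Sigma_m}$. Because $\widehat{\Gamma}_m$ is a bijective homomorphism, hence injective, the claim ``$\widehat{\Gamma}_n(f)\big|_{\mathcal{K}_m}=0$ whenever $f\big|_{X_m}=0$'' is equivalent to $\ker r_{m,n}\subseteq\ker\Phi_{m,n}$ for the normal surjective unital $\ast$-homomorphism $\Phi_{m,n}:=\widehat{\Gamma}_m^{-1}\circ\pi_{m,n}\circ\widehat{\Gamma}_n\colon\text{L}^\infty(X_n,\mu_n)\to\text{L}^\infty(X_m,\mu_m)$, and this in turn follows once I show $\Phi_{m,n}=r_{m,n}$.

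The core of the argument is therefore to prove $\Phi_{m,n}=r_{m,n}$, which I would first establish on the weak-$\ast$ dense $C^\ast$-subalgebra $C(X_n)\subseteq\text{L}^\infty(X_n,\mu_n)$. For $g\in C(X_n)$ and $\xi_j,\xi_k$ in the dense set $\widetilde{\mathcal{Z}}\cap\mathcal{K}_m$, the reducing property gives $\langle\xi_j,\widehat{\Gamma}_n(g)\xi_k\rangle=\langle\xi_j,\pi_{m,n}(\widehat{\Gamma}_n(g))\xi_k\rangle$, and unwinding the definitions of the representing measures turns this into the identity $\int_{X_n}g\,\mathrm{d}\mu_n^{\xi_j,\xi_k}=\int_{X_m}\Phi_{m,n}(g)\,\mathrm{d}\mu_m^{\xi_j,\xi_k}$. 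Using that $\{\mu_n\}$ is a projective system of measures together with the reducing property forces $\mu_n^{\xi_j,\xi_k}$ to be carried by $X_m$ and to restrict there to $\mu_m^{\xi_j,\xi_k}$, so the left side equals $\int_{X_m}g\big|_{X_m}\,\mathrm{d}\mu_m^{\xi_j,\xi_k}$; letting $\xi_j,\xi_k$ range over a dense set yields $\Phi_{m,n}(g)=g\big|_{X_m}=r_{m,n}(g)$. I would then upgrade this from $C(X_n)$ to all of $\text{L}^\infty(X_n,\mu_n)$: both $\Phi_{m,n}$ and $r_{m,n}$ are normal unital $\ast$-homomorphisms that agree on the weak-$\ast$ dense subalgebra $C(X_n)$, so by uniqueness of the normal extension they coincide. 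Consequently $f\big|_{X_m}=0$ gives $\Phi_{m,n}(f)=r_{m,n}(f)=0$, i.e.\ $\pi_{m,n}(\widehat{\Gamma}_n(f))=\widehat{\Gamma}_n(f)\big|_{\mathcal{K}_m}=0$, which is the assertion.

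The step I expect to be the genuine obstacle is precisely the support statement $\mu_n^{\xi,\xi}(X_n\setminus X_m)=0$ for $\xi\in\mathcal{K}_m$, equivalently the compatibility $\pi_{m,n}\circ\widehat{\Gamma}_n=\widehat{\Gamma}_m\circ r_{m,n}$ of the per-level disintegrations. This is not automatic: an isomorphism $\mathcal{M}_n\cong\text{L}^\infty(X_n,\mu_n)$ chosen with no regard to the connecting maps can place the spectral mass of a vector of $\mathcal{K}_m$ outside $X_m$, and then the conclusion fails. The resolution is that the data supplied by \textbf{Condition I} is exactly an isomorphism of projective systems, so the dense $C^\ast$-subalgebras $\mathcal{A}_n$ and the Gelfand maps $\Gamma_n$ can be chosen compatibly with $\pi_{m,n}$; concretely one uses surjectivity of the restriction $C(X_n)\to C(X_m)$ (valid because $X_m\subseteq X_n$ is the measurable subspace of the strictly inductive system) together with the surjectivity and normality of $\pi_{m,n}$ to arrange $\pi_{m,n}\circ\Gamma_n=\Gamma_m\circ r_{m,n}$ on the nose. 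Once this compatibility is secured, the normal-extension step above closes the argument and the remaining measure-theoretic identities are routine.
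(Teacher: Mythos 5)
Your proposal is correct and, once unwound, is essentially the argument the paper gives: everything reduces to the commutativity $\psi_{m,n}\circ\widehat{\Gamma}_n=\widehat{\Gamma}_m\circ\tau_{m,n}$ (your $\pi_{m,n}\circ\widehat{\Gamma}_n=\widehat{\Gamma}_m\circ r_{m,n}$), from which the lemma follows in one line. The paper obtains this square by identifying $\psi_{m,n}$ with restriction to $\mathcal{H}_m$ via Proposition 3.14 of \cite{MJ1} and then simply declaring that the diagram commutes as part of the data of \textbf{Condition I}; your representing-measure computation on $C(X_n)$ followed by normal extension is a longer route to the same square, and, as you yourself concede, it only closes because the Gelfand isomorphisms are taken compatible with the connecting maps, which is precisely the assertion at stake. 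Your explicit observation that this compatibility is the genuine content and does not follow from an arbitrary choice of the isomorphisms $\mathcal{M}_n\cong \text{L}^\infty(X_n,\Sigma_n,\mu_n)$ is a point the paper passes over silently with ``hence the following diagram commutes,'' so if anything your treatment is the more careful of the two.
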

\begin{proof}
It is given in \textbf{Condition I} that the abelian locally von Neumann algebra $\mathcal{M}$ is the projective limit of the projective system $\left ( \{ \mathcal{M}_n \}_{n \in \mathbb{N}},  \{\psi_{m,n} \}_{m \leq n} \right )$ of von Neumann algebras. Since $\big ( X_m, \Sigma_m, \mu_m \big )$ is a locally standard measure space, for $m \leq n$, the map $\tau_{m,n}: \text{L}^\infty \big ( X_n, \Sigma_n, \mu_n \big )  \rightarrow \text{L}^\infty \big ( X_m, \Sigma_m, \mu_m \big )$ defined by 
\begin{equation*}
\tau_{m, n}(f) := f \big |_{X_m}
\end{equation*}
is a sujective normal $\ast$-homomorphism. It follows that $\big ( \text{L}^\infty \big ( X_n, \Sigma_n, \mu_n \big ), \{\tau_{m,n} \}_{m \leq n} \big )$ is a projective system of abelian von Neumann algebras. By using the notion described in Proposition 3.14 of \cite{MJ1}, for each $m \leq n$, we have $\psi_{m, n}(L) = L \big |_{\mathcal{H}_m}$ for every $L \in \mathcal{M}_n$, and hence the following diagram commutes.
\begin{center}
\begin{tikzcd}[sep=huge]
\text{L}^\infty \big ( X_n, \Sigma_n, \mu_n \big ) \arrow[dd, "\tau_{m, n}"'] \; \; \arrow[rr, "\widehat{\Gamma}_n"] & & \; \;  \mathcal{M}_n \arrow[dd, "\psi_{m, n}"] \\
&& \\
\text{L}^\infty \big ( X_m, \Sigma_m, \mu_m \big ) \arrow[to path={node[midway,scale=3] {$\circlearrowright$}}] \; \; \arrow[rr, "\widehat{\Gamma}_m"']& & \; \;  \mathcal{M}_m 
\end{tikzcd}  
\end{center}

\noindent 
That is 
\begin{equation} \label{eq; condition 2}
\widehat{\Gamma}_m \circ \tau_{m, n} = \psi_{m, n} \circ \widehat{\Gamma}_n
\end{equation}
In particular, if $f \in \text{L}^\infty \big ( X_n, \Sigma_n, \mu_n \big )$ satisfying that $f \big |_{X_m} = 0$, then from Equation \eqref{eq; condition 2}, we get 
\begin{align*}
\widehat{\Gamma}_n(f) \big |_{\mathcal{H}_m} &= \psi_{m, n} \circ \widehat{\Gamma}_n (f) \\
&= \widehat{\Gamma}_m \circ \tau_{m, n} (f) \\
&= \widehat{\Gamma}_m \big (f \big |_{X_m} \big ) \\
&= 0.
\end{align*}
\end{proof}

\begin{note} \label{note; locally Hilbert space}
From Lemma \ref{lem;flhs}, we get a locally standard measure space $\big ( X, \Sigma, \mu \big )$ and a family $\{ \mathcal{D}_p \}_{p \in X}$ of locally Hilbert spaces. Then, consider the direct integral $\displaystyle \dilX \mathcal{D}_p \, \dmu$ of the family $\{ \mathcal{D}_p \}_{p \in X}$ of locally Hilbert spaces over the locally standard measure space $(X, \Sigma, \mu)$. For a fixed $n \in \mathbb{N}$, define 
\begin{equation} \label{eq; Hilbert space H n}
\mathcal{H}_n := \left \{ u \in \dilX \mathcal{D}_p \, \dmu\; \;  : \; \; \text{supp}(u) \subseteq X_n, \; \; u(p) \in \mathcal{H}_{n, p}, \; \; \text{almost every} \; \; p \in X_n \right \}.  
\end{equation} 
From Proposition \ref{prop;dilhs}, it is clear that $\mathcal{H}_n$ is a Hilbert space and moreover the family $\{ \mathcal{H}_n \}_{n \in \mathbb{N}}$ is a strictly inductive system of Hilbert spaces such that 
\begin{equation*}
\dilX \mathcal{D}_p \, \dmu = \bigcup\limits_{n \in \mathbb{N}} \mathcal{H}_n.
\end{equation*}
Next, we show that the given locally Hilbert space $\mathcal{D} = \bigcup\limits_{n \in \mathbb{N}} \mathcal{K}_n$ can be identified with $\displaystyle \dilX \mathcal{D}_p \, \dmu = \bigcup\limits_{n \in \mathbb{N}} \mathcal{H}_n$ through a bijective isometry. In this direction, we prove the following lemma.
\end{note}

\begin{lemma} \label{lem;i}
For every $n \in \mathbb{N}$, there exists an isometry from the Hilbert space $\mathcal{K}_n$ to $\mathcal{H}_n$.
\end{lemma}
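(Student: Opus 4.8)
The goal is to produce, for each fixed $n$, a linear inner-product-preserving map $W_n \colon \mathcal{K}_n \to \mathcal{H}_n$. Since Proposition~\ref{prop;dilhs} identifies $\mathcal{H}_n$ with the classical direct integral $\int^\oplus_{X_n} \mathcal{H}_{n,p}\,\mathrm{d}\mu_n(p)$ through the unitary $V_n$ of \eqref{eq;iso}, it suffices to build an isometry of $\mathcal{K}_n$ into this direct integral. The plan is to define $W_n$ first on the countable subspace $\widetilde{\mathcal{Z}} \cap \mathcal{K}_n$, which is dense in $\mathcal{K}_n$, and then extend it by continuity. For $h \in \widetilde{\mathcal{Z}} \cap \mathcal{K}_n$ I would assign the section $\widehat{h}(p) := h + \mathcal{N}_p \in \widetilde{\mathcal{Z}}/\mathcal{N}_p \subseteq \mathcal{H}_p$ and set $(W_n h)(p) := \widehat{h}(p)$ for $p \in X_n$ and $0$ otherwise.

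Two things must then be checked: that $\widehat{h}(p)$ actually lies in the fibre $\mathcal{H}_{n,p}$ of \eqref{eq; Hilbert space H n p} for almost every $p \in X_n$, and that the section is square-integrable. For the membership, fix $p$ with smallest index $m = m_p$ (so that $p \in X_m \setminus X_{m-1}$) and decompose $h = \sum_{l=1}^{n} (P_l - P_{l-1})h$. Writing $g := h - (P_m - P_{m-1})h$ for the off-layer part, Lemma~\ref{lem; condition 2} shows that the measure $\mu_m^{g,g}$ is carried by $X_{m-1}$, so its Radon--Nikodym derivative $f_m^{g,g}$ vanishes $\mu_m$-almost everywhere on $X_m \setminus X_{m-1}$; hence $g \in \mathcal{N}_p$ there and $\widehat{h}(p) = (P_m - P_{m-1})h + \mathcal{N}_p$, which is one of the generators of $\mathcal{H}_{n,p}$. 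Square-integrability is immediate because $\int_{X_n}\|\widehat{h}(p)\|_{\mathcal{H}_p}^2\,\mathrm{d}\mu_n = \int_{X_n} \phi_p(h,h)\,\mathrm{d}\mu_n < \infty$.

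The heart of the argument is the isometry identity, which after splitting $X_n$ into the pieces where $m_p$ is constant reads
\begin{equation*}
\langle h, h'\rangle_{\mathcal{K}_n} = \int_{X_n} \phi_p(h,h')\,\mathrm{d}\mu_n(p) = \sum_{m=1}^{n} \int_{X_m \setminus X_{m-1}} f_m^{h,h'}\,\mathrm{d}\mu_m .
\end{equation*}
Taking $f = \chi_{X_m}$ and $f = \chi_{X_{m-1}}$ in the defining relation $\langle P_m h, \widehat{\Gamma}_m(f)P_m h'\rangle = \int_{X_m} f\, f_m^{h,h'}\,\mathrm{d}\mu_m$, the $m$-th summand becomes $\langle P_m h, R_m P_m h'\rangle$ with $R_m := \widehat{\Gamma}_m(\chi_{X_m \setminus X_{m-1}})$. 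Thus the identity is equivalent to the almost-everywhere consistency $f_n^{h,h'}(p) = f_{m_p}^{h,h'}(p)$ of the Radon--Nikodym derivatives, i.e. to the statement that the spectral projection $R_m$ agrees with the orthogonal layer projection $P_m - P_{m-1}$ on $\mathcal{K}_m$.

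This agreement is the main obstacle, and it is exactly where the structure of \textbf{Condition I} together with Lemma~\ref{lem; condition 2} is indispensable. The one-sided inclusion $\widehat{\Gamma}_m(\chi_{X_{m-1}}) \geq P_{m-1}$ follows readily from $\widehat{\Gamma}_m(\chi_{X_{m-1}})\!\restriction_{\mathcal{K}_{m-1}} = \mathrm{Id}$; the delicate point is that the extra part vanishes, so that the Hilbert-space filtration $\{\mathcal{K}_m\}$ is a \emph{spectral} filtration for $\mathcal{M}_m$ and matches the measurable partition $\{X_m \setminus X_{m-1}\}$. Here one uses that every $\mathcal{K}_m$ reduces $\mathcal{M}_n$, so that $P_m \in (\mathcal{M}_n)'$, in combination with the kernel computation of Lemma~\ref{lem; condition 2} to force $R_m = P_m - P_{m-1}$. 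Granting this, the layer-by-layer sum telescopes to $\sum_{m=1}^n\langle (P_m - P_{m-1})h,\,(P_m - P_{m-1})h'\rangle = \langle h, h'\rangle_{\mathcal{K}_n}$, which establishes the isometry identity on $\widetilde{\mathcal{Z}}\cap\mathcal{K}_n$. Since $\mathcal{H}_n$ is complete and the image of the dense subspace lies in $\mathcal{H}_n$, a standard continuity argument extends $W_n$ to an isometry $\mathcal{K}_n \to \mathcal{H}_n$, completing the proof.
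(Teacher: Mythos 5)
Your map is, up to null sets, exactly the one the paper constructs: the paper sets $W_n(h_j)(p) = \big(P_i(\mathrm{Id}_{\mathcal{K}}-P_{i-1})h_j\big)+\mathcal{N}_p$ for $p\in X_i\setminus X_{i-1}$, and your section $p\mapsto h+\mathcal{N}_p$ agrees with it almost everywhere. Your justification of that agreement is correct: for $g=h-(P_m-P_{m-1})h$ one has $P_mg=P_{m-1}h$, so $\mu_m^{g,g}(E)=\la P_{m-1}h,\widehat{\Gamma}_m(\chi_E)P_{m-1}h\ra=0$ for every $E\subseteq X_m\setminus X_{m-1}$ by Lemma \ref{lem; condition 2}, whence $g\in\mathcal{N}_p$ a.e.\ on that layer. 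Your isometry computation is also organized the same way as the paper's: the vanishing of the cross terms is the paper's $I_2=I_3=I_4=0$, and your sum $\sum_{m}\la P_mh,\,R_mP_mh'\ra$ with $R_m=\widehat{\Gamma}_m(\chi_{X_m\setminus X_{m-1}})$ is the paper's $I_1$. So this is the same route, not a different one.

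The genuine gap is the step you introduce with ``Granting this'': the claim $R_m=P_m-P_{m-1}$, equivalently $\widehat{\Gamma}_m(\chi_{X_{m-1}})=P_{m-1}$, is the entire content of the isometry identity, and the ingredients you cite do not force it. That $\mathcal{K}_{m-1}$ reduces $\mathcal{M}_m$ together with Lemma \ref{lem; condition 2} yields only what you call the one-sided inclusion: $\widehat{\Gamma}_m(\chi_{X_{m-1}})$ is a projection commuting with $P_{m-1}$ which restricts to the identity on $\mathcal{K}_{m-1}$, hence $\widehat{\Gamma}_m(\chi_{X_{m-1}})\geq P_{m-1}$. Nothing in \textbf{Condition I} excludes its range being strictly larger than $\mathcal{K}_{m-1}$: Definition \ref{def;sisms} permits $X_{m-1}=X_m$ while $\mathcal{K}_{m-1}\subsetneq\mathcal{K}_m$ (e.g.\ $\mathcal{K}_m=\mathcal{K}_{m-1}\oplus\mathcal{K}_{m-1}$ with $\mathcal{M}_m=\{L\oplus L : L\in\mathcal{M}_{m-1}\}$), in which case $\widehat{\Gamma}_m(\chi_{X_{m-1}})=\mathrm{Id}_{\mathcal{K}_m}\neq P_{m-1}$, every hypothesis you invoke holds, and your telescoping sum returns $\|P_{m-1}h\|^2$ rather than $\|h\|^2$. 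What is being granted is precisely that the new layer $\mathcal{K}_m\ominus\mathcal{K}_{m-1}$ is carried by the new part $X_m\setminus X_{m-1}$ of the base; this must be proved from additional structure or imposed as a hypothesis, and your proposal does not do either. (For fairness: the paper's own proof makes the same silent leap when it telescopes $I_1=\sum_i\la P_ih_j,\widehat{\Gamma}_i(\chi_{X_i\setminus X_{i-1}})P_ih_j\ra$ into $\la h_j,\widehat{\Gamma}_l(\chi_{X_l})h_j\ra$; you have at least isolated the missing ingredient explicitly, but you have not supplied it.)
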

\begin{proof}
We recall that $\mathcal{Z}_n$ is a countable dense subspace of the Hilbert space $\mathcal{K}_n$ for each $n \in \mathbb{N}$. For $i \geq 2$, we note that $(P_i(\mathrm{Id}_\mathcal{K} - P_{i-1})$ is the projection of the Hilbert space $\mathcal{K}$ onto $\mathcal{K}_i \ominus \mathcal{K}_{i -1 }$. Using this, for a fixed $n \in \mathbb{N}$, we give a map $W_n : \mathcal{Z}_n \rightarrow \mathcal{H}_n$ as; for $h_j \in \mathcal{Z}_n$, define $W_n(h_j) : X \rightarrow \bigcup\limits_{p \in X} \mathcal{D}_p$ by
\begin{equation} \label{eq; V n}
W_n(h_j)(p) := (P_i(\mathrm{Id}_\mathcal{K} - P_{i-1}) h_j) + \mathcal{N}_p \; \; \text{if} \; \; p \in X_i \setminus X_{i-1}.
\end{equation}
This map is defined on a countable dense subset of $\mathcal{K}_n$. To show that $W_n(h_j) \in \mathcal{H}_n$, we record the following observations. For $i \geq n+1$, if $h_j \in \mathcal{Z}_n$, then 
\begin{align*}
(P_i(\mathrm{Id}_\mathcal{K} - P_{i-1}) h_j) + \mathcal{N}_p = \mathcal{N}_p \in \mathcal{D}_{p}. 
\end{align*} 
Otherwise, for $i \leq n$, we have
\begin{align*}
(P_i(\mathrm{Id}_{\mathcal{K}} - P_{i-1}) h_j) + \mathcal{N}_p &= (P_i(h_j - P_{i-1}h_j )) + \mathcal{N}_p \\
&= (P_ih_j - P_iP_{i-1}h_j) + \mathcal{N}_p \\
&= (P_ih_j - P_{i-1}h_j)  + \mathcal{N}_p \in \mathcal{H}_{i, p} \subseteq \mathcal{H}_{n, p}.
\end{align*} 
These observations show that the $\text{supp}(W_n(h_j)) \subseteq X_n$, and $W_n(h_j)(p) \in \mathcal{H}_{n, p} \subseteq \mathcal{D}_p$ for almost every $p \in X_n$. 
Suppose $h_j, h_k \in \mathcal{Z}_n$ and $\lambda \in \mathbb{Q} + \iota \mathbb{Q}$, then $\lambda h_j + h_k \in \mathcal{Z}_n$, and the function $W_n(\lambda h_j + h_k) : X \rightarrow \bigcup\limits_{p \in X} \mathcal{D}_p$ is defined as; for $p \in X_i \setminus X_{i-1}$
\begin{align*}
W_n(\lambda h_j + h_k)(p) &= (P_i(\mathrm{Id}_\mathcal{K} - P_{i-1}) \lambda h_j + h_k) + \mathcal{N}_p \\
&= \lambda (P_i(\mathrm{Id}_\mathcal{K} - P_{i-1}) h_j) + \mathcal{N}_p + (P_i(\mathrm{Id}_\mathcal{K} - P_{i-1}) h_k) + \mathcal{N}_p \\
&= \lambda W_n(h_j)(p) + W_n(h_k)(p)
\end{align*}
This demonstrates that the map $W_n$ is linear on a countable, dense subspace $\mathcal{Z}_n$ of $\mathcal{K}_n$.

Next, we show that the map $W_n$ is an isometry on $\mathcal{Z}_n$. Fix $h_j \in \mathcal{Z}_n$, then we can choose the smallest $l \in \mathbb{N}$ such that $h_j \in \mathcal{K}_l$ (i.e. $h_j \notin \mathcal{K}_{l-1}$). Now we denote $I = \| W_n(h_j) \|^2$. Then we have
\begin{align*}
I =& \la W_n(h_j), W_n(h_j) \ra \\
=& \int_X \la W_n(h_j)(p), W_n(h_j)(p) \ra \, \dmu \\
=& \int_{X_n} \la W_n(h_j)(p), W_n(h_j)(p) \ra \, \mathrm{d} \mu_n(p)  \\
=& \sum_{i=1}^{n} \int_{X_i \setminus X_{i-1}} \la W_n(h_j)(p), W_n(h_j)(p) \ra \, \mathrm{d} \mu_i(p) \\
=& \sum_{i=1}^{l} \int_{X_i \setminus X_{i-1}} \la (P_i(\mathrm{Id}_\mathcal{K} - P_{i-1}) h_j) + \mathcal{N}_p, (P_i(\mathrm{Id}_\mathcal{K} - P_{i-1}) h_j) + \mathcal{N}_p \ra \, \mathrm{d} \mu_i(p) \\
=&  \sum_{i=1}^{l} \int_{X_i \setminus X_{i-1}} \la (P_ih_j - P_{i-1} h_j) + \mathcal{N}_p, (P_ih_j - P_{i-1} h_j) + \mathcal{N}_p \ra \, \mathrm{d} \mu_i(p) \\
=& I_1 - I_2 - I_3 + I_4,
\end{align*}
where
\begin{align*}
I_1 =  \sum_{i=1}^{l} \left ( \int_{X_i \setminus X_{i-1}} \la P_ih_j + \mathcal{N}_p, P_ih_j + \mathcal{N}_p \ra \, \mathrm{d} \mu_i(p) \right ) &= \sum_{i=1}^{l} \int_{X_i \setminus X_{i-1}} \phi_p(P_i h_j, P_i h_j) \, \mathrm{d} \mu_i(p) \\
&= \sum_{i=1}^{l} \int_{X_i \setminus X_{i-1}} f_i^{P_i h_j, P_i h_j}(p) \, \mathrm{d} \mu_i(p) \\
 &= \sum_{i=1}^{l} \big \langle P_i h_j, \; \widehat{\Gamma}_i\big  (\up{\chi}_{X_i \setminus X_{i-1}} \big )P_i h_j \big \rangle \\
 &=\big \langle h_j, \widehat{\Gamma}_l \big (\up{\chi}_{X_l} \big )h_j \big \rangle = \|h_j\|^2.
\end{align*}
Next, in the process of computing $I_2, I_3$ and $I_4$, we use Lemma \ref{lem; condition 2}. Thus
\begin{align*}
I_2 = \sum_{i=1}^{l} \left ( \int_{X_i \setminus X_{i-1}} \la P_ih_j + \mathcal{N}_p, P_{i-1} h_j + \mathcal{N}_p \ra \, \mathrm{d} \mu_i(p) \right) 
=& \sum_{i=1}^{l} \int_{X_i \setminus X_{i-1}} \phi_p ( P_i h_j, P_{i-1} h_j) \, \mathrm{d} \mu_i(p) \\
=& \sum_{i=1}^{l} \int_{X_i \setminus X_{i-1}} f_i^{P_i h_j, P_{i-1} h_j}(p) \, \mathrm{d} \mu_i(p) \\
=& \sum_{i=1}^{l} \la P_i h_j, \widehat{\Gamma}_i \big (\up{\chi}_{X_i \setminus X_{i-1}} \big ) P_{i-1} h_j \ra = 0,
\end{align*}    

\begin{align*}
I_3 = \sum_{i=1}^{l} \left ( \int_{X_i \setminus X_{i-1}} \la P_{i-1} h_j + \mathcal{N}_p, P_i h_j + \mathcal{N}_p \ra \, \mathrm{d} \mu_i(p) \right) 
=&  \sum_{i=1}^{l} \int_{X_i \setminus X_{i-1}} \phi_p ( P_{i-1} h_j, P_ih_j) \, \mathrm{d} \mu_i(p) \\
=&  \sum_{i=1}^{l} \int_{X_i \setminus X_{i-1}} f_i^{P_{i-1} h_j, P_i h_j} (p) \, \mathrm{d} \mu_i(p) \\
=&  \sum_{i=1}^{l} \la P_{i-1} h_j, \; \widehat{\Gamma}_i \big (\up{\chi}_{X_i \setminus X_{i-1}} \big )P_i h_j \ra = 0
\end{align*}
and
\begin{align*}
I_4 = \sum_{i=1}^{l} \left (  \int_{X_i \setminus X_{i-1}} \la P_{i-1} h_j + \mathcal{N}_p, P_{i-1} h_j + \mathcal{N}_p \ra \, \mathrm{d} \mu_i(p) \right) 
=&  \sum_{i=1}^{l} \int_{X_i \setminus X_{i-1}} \phi_p ( P_{i-1} h_j, P_{i-1} h_j) \, \mathrm{d} \mu_k(p)\\
=&  \sum_{i=1}^{l} \int_{X_i \setminus X_{i-1}} f_i^{P_{i-1} h_j, P_{i-1} h_j} (p) \, \mathrm{d} \mu_i(p) \\
=&  \sum_{i=1}^{l} \la P_{i-1} h_j, \widehat{\Gamma}_i \big (\up{\chi}_{X_i \setminus X_{i-1}} \big) P_{i-1} h_j \ra = 0.
\end{align*}
Hence, we get
\begin{equation*}
\| W_n(h_j) \|^2 = I = I_1 - I_2 - I_3 + I_4 = I_1 = \|h_j\|^2.
\end{equation*}$ $
Subsequently, we extend the map linearly to $\mathcal{K}_n$. Thus, $W_n$ defines an isometry on the Hilbert space $\mathcal{K}_n$. 
\end{proof}

Consider a measurable function $f: X \rightarrow \mathbb{C}$ with the property that $\text{supp}(f) \subseteq X_n$ and $f \big |_{X_n} \in \text{L}^\infty \big (X_n, \Sigma_n, \mu_n \big )$. Corresponding to the function $f$, we define the locally diagonalizable operator denoted by $T_f$, on the locally Hilbert space $\displaystyle \dilX \mathcal{D}_p \, \dmu$ (see (\ref{def;Diag(lbo)}) of Definition \ref{def;DecDiag(lbo)}). 

\begin{lemma} \label{lem;dls}
For each $n \in \mathbb{N}$, consider the subset 
\begin{equation*}
\mathcal{H}^0_n := \Big \{ T_fW_n(h_j) \; \; : \; \; f \in \text{EB}_\text{loc} \big( X, \Sigma, \mu \big), \; \; \text{supp}(f) \subseteq X_n, \; \;   h_j \in \mathcal{Z}_n \Big \}.
\end{equation*}  
Then  $\overline{\text{span}} \; \mathcal{H}^0_n = \mathcal{H}_n$.
\end{lemma}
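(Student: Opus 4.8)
The plan is to establish the two inclusions separately, with $\overline{\text{span}}\,\mathcal{H}^0_n \subseteq \mathcal{H}_n$ being the routine direction and the reverse being the substance of the lemma. For the easy direction, recall from Lemma \ref{lem;i} that $W_n(h_j) \in \mathcal{H}_n$ for every $h_j \in \mathcal{Z}_n$; since $f \in \text{EB}_\text{loc}(X,\Sigma,\mu)$ with $\text{supp}(f) \subseteq X_n$ gives a diagonalizable locally bounded operator $T_f$ with $(T_fW_n(h_j))(p) = f(p)\,W_n(h_j)(p)$ for almost every $p$, the vector $T_fW_n(h_j)$ again has support in $X_n$ and fibre values in $\mathcal{H}_{n,p}$, hence lies in $\mathcal{H}_n$ by \eqref{eq; Hilbert space H n}. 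Thus $\overline{\text{span}}\,\mathcal{H}^0_n \subseteq \mathcal{H}_n$. Throughout I would transport the problem via the unitary $V_n : \mathcal{H}_n \to \int^\oplus_{X_n} \mathcal{H}_{n,p}\,\mathrm{d}\mu_n(p)$ of Equation \eqref{eq;iso}, under which $T_f|_{\mathcal{H}_n}$ is multiplication by $f|_{X_n} \in \text{L}^\infty(X_n,\mu_n)$.

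For the reverse inclusion I would show that $\overline{\text{span}}\,\mathcal{H}^0_n$ has trivial orthogonal complement in $\mathcal{H}_n$. So suppose $w \in \mathcal{H}_n$ satisfies $\la w, T_fW_n(h_j)\ra = 0$ for all admissible $f$ and all $h_j \in \mathcal{Z}_n$; using the inner product of $\mathcal{H}_n$ this reads
\begin{equation*}
0 = \la w, T_fW_n(h_j)\ra = \int_{X_n} \la w(p),\, f(p)\,W_n(h_j)(p)\ra_{\mathcal{H}_{n,p}} \, \mathrm{d}\mu_n(p).
\end{equation*}
The key point is that the zero-extension of any $g \in \text{L}^\infty(X_n,\mu_n)$ lies in $\text{EB}_\text{loc}(X,\Sigma,\mu)$ with support in $X_n$ — this is immediate from the description of $\Sigma$ in \eqref{eq;sigma algebra} — so that $f|_{X_n}$ exhausts $\text{L}^\infty(X_n,\mu_n)$. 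Because $p \mapsto \la w(p), W_n(h_j)(p)\ra$ is integrable on $X_n$ (both fields are square-integrable and $\mu_n$ is finite), the vanishing of the integral against every such $f$ forces, for each fixed $h_j$, that $\la w(p), W_n(h_j)(p)\ra = 0$ for almost every $p \in X_n$.

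Finally I would invoke the countability of $\mathcal{Z}_n$ to extract a single $\mu_n$-null set $N \subseteq X_n$ outside of which $\la w(p), W_n(h_j)(p)\ra = 0$ holds simultaneously for all $h_j \in \mathcal{Z}_n$. For $p \in X_n \setminus N$ of level $m$ (the least index with $p \in X_m$), Equation \eqref{eq; V n} identifies $W_n(h_j)(p)$ with $(P_mh_j - P_{m-1}h_j) + \mathcal{N}_p$, and these are precisely the generators whose closed linear span is $\mathcal{H}_{n,p}$ by \eqref{eq; Hilbert space H n p}. Since $w(p) \in \mathcal{H}_{n,p}$ and is orthogonal to a total subset of $\mathcal{H}_{n,p}$, we get $w(p) = 0$ for almost every $p \in X_n$, i.e. $w = 0$. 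Combining the two inclusions gives $\overline{\text{span}}\,\mathcal{H}^0_n = \mathcal{H}_n$.

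I expect the main obstacle to be the measure-theoretic interchange of quantifiers between the last two paragraphs: the orthogonality identity yields, for each generator separately, only an almost-everywhere statement, and consolidating these into one exceptional null set valid for all generators at once is exactly where the countability of $\mathcal{Z}_n$ — and hence of the fundamental system of measurable vector fields constructed in Lemma \ref{lem;flhs} — is indispensable. A secondary technical point, worth stating explicitly, is the surjectivity claim that $f|_{X_n}$ sweeps out all of $\text{L}^\infty(X_n,\mu_n)$, which underlies the passage from integral-vanishing to pointwise-vanishing.
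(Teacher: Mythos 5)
Your proposal is correct and follows essentially the same route as the paper's proof: both reduce to showing the orthogonal complement of $\text{span}\,\mathcal{H}^0_n$ in $\mathcal{H}_n$ is trivial, pass from the vanishing of $\int_{X_n} f(p)\la v(p), W_n(h_j)(p)\ra\,\mathrm{d}\mu_n(p)$ for all $f$ to almost-everywhere vanishing via the $\text{L}^1$--$\text{L}^\infty$ duality, and then use that the vectors $W_n(h_j)(p)$, $h_j \in \mathcal{Z}_n$, are total in $\mathcal{H}_{n,p}$ by Equation \eqref{eq; Hilbert space H n p}. The only difference is that you spell out two points the paper leaves implicit (the easy inclusion, and the consolidation of countably many null sets), which is a harmless refinement rather than a different argument.
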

\begin{proof}
Assume that $v \in \mathcal{H}_n$ and $\la v, \; T_fW_n(h_j) \ra = 0$ for all $T_fW_n(h_j) \in \mathcal{H}^0_n$. Now fix $f \in \text{EB}_\text{loc} \big( X, \Sigma, \mu \big)$ such that $\text{supp}(f) \subseteq X_n$ and also fix $h_j \in \mathcal{Z}_n$. Then we get 
\begin{equation*}
\int_X \la v(p), \; f(p)W_n(h_j)(p) \ra \, \dmu = \int_{X_n}  f(p) \la v(p), \; W_n(h_j)(p) \ra \, \mathrm{d} \mu_n(p) = 0.
\end{equation*} 
Here, note that the function $f \in \text{EB}_\text{loc} \big( X, \Sigma, \mu \big)$ with $\text{supp}(f) \subseteq X_n$ was arbitrarily chosen. Thus by using the fact that that dual of 
$\text{L}^1 \big (X_n, \Sigma_n, \mu_n \big )$  is $\text{L}^\infty \big (X_n, \Sigma_n, \mu_n \big )$, we get $\la v(p), W_n(h_j)(p) \ra = 0$ for almost every $p \in X_n$. This implies for almost every $p \in X_i \setminus X_{i-1}$, we have
\begin{align*}
\la v(p), \; W_n(h_j)(p) \ra &= \la v(p), \; (P_i(\mathrm{Id}_\mathcal{K} - P_{i-1}) h_j) + \mathcal{N}_p \ra = 0.
\end{align*} 
Here note that $h_j \in W_n$ was arbitrarily fixed. Hence we use the definition of $\mathcal{H}_{n, p}$ \big (see Equation \eqref{eq; Hilbert space H n p} with $p \in X_i \setminus X_{i-1}$ \big ) along with the definition of $\mathcal{H}_n$ \big (see Equation \eqref{eq; Hilbert space H n} \big ) to conclude that $v(p) = 0$ for almost every $p \in X_i \setminus X_{i-1}$ and thus for almost every $p \in X$. Therefore, $v = 0$. This proves that $\overline{\text{span}} \; \mathcal{H}^0_n = \mathcal{H}_n$.
\end{proof}

Next, by using Lemma \ref{lem;dls}, we show that the map $W_n : \mathcal{K}_n \rightarrow \mathcal{H}_n$ is surjective. 

\begin{lemma} \label{lem;s}
The map $W_n : \mathcal{K}_n \rightarrow \mathcal{H}_n$  is surjective.
\end{lemma}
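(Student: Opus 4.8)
The plan is to combine the two preceding lemmas. Since $W_n$ is an isometry between Hilbert spaces (Lemma \ref{lem;i}) and $\mathcal{K}_n$ is complete, its range $W_n(\mathcal{K}_n)$ is a closed subspace of $\mathcal{H}_n$; and by Lemma \ref{lem;dls} we have $\mathcal{H}_n = \overline{\text{span}}\,\mathcal{H}^0_n$. Hence it suffices to prove that every generator $T_fW_n(h_j)$ of $\mathcal{H}^0_n$ (with $f \in \text{EB}_\text{loc}\big(X,\Sigma,\mu\big)$, $\text{supp}(f)\subseteq X_n$ and $h_j \in \mathcal{Z}_n$) lies in $W_n(\mathcal{K}_n)$. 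The crucial ingredient will be the intertwining identity
\[
T_f W_n(h) = W_n\big(\widehat{\Gamma}_n(f|_{X_n})\,h\big), \qquad h \in \mathcal{K}_n,
\]
where $\widehat{\Gamma}_n(f|_{X_n}) \in \mathcal{M}_n \subseteq \mathcal{B}(\mathcal{K}_n)$ so that the right-hand side is meaningful. Granting this, $T_fW_n(h_j) = W_n\big(\widehat{\Gamma}_n(f|_{X_n})h_j\big) \in W_n(\mathcal{K}_n)$, whence $\mathcal{H}^0_n \subseteq W_n(\mathcal{K}_n)$ and therefore $\mathcal{H}_n = \overline{\text{span}}\,\mathcal{H}^0_n \subseteq W_n(\mathcal{K}_n)$, giving surjectivity.

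To establish the intertwining I would first prove the scalar identity
\begin{equation*}
\big\langle T_f W_n(h_j),\, W_n(h_k)\big\rangle = \big\langle \widehat{\Gamma}_n(f|_{X_n})\,h_j,\, h_k\big\rangle, \qquad h_j, h_k \in \mathcal{Z}_n. \tag{$\star$}
\end{equation*}
This is verified by exactly the computation used in Lemma \ref{lem;i}: expand the left-hand side as $\int_{X_n} f(p)\,\langle W_n(h_j)(p), W_n(h_k)(p)\rangle\,\mathrm{d}\mu_n(p)$, split $X_n$ into the shells $X_i \setminus X_{i-1}$, rewrite $\langle W_n(h_j)(p), W_n(h_k)(p)\rangle$ on each shell as the Radon--Nikodym density $f_i^{(P_i-P_{i-1})h_j,\,(P_i-P_{i-1})h_k}(p)$, and use the defining relation of these densities together with the compatibility $\widehat{\Gamma}_m \circ \tau_{m,n} = \psi_{m,n}\circ\widehat{\Gamma}_n$ from Lemma \ref{lem; condition 2} (equation \eqref{eq; condition 2}) and the fact that each $\mathcal{K}_i$ reduces $\mathcal{M}_n$. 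The resulting sum telescopes to $\langle \widehat{\Gamma}_n(f|_{X_n})h_j, h_k\rangle$, precisely the telescoping already carried out for the norm in Lemma \ref{lem;i}. By continuity and density of $\mathcal{Z}_n$ in $\mathcal{K}_n$, the identity $(\star)$ persists with $h_j$ replaced by an arbitrary vector of $\mathcal{K}_n$.

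Finally I would upgrade $(\star)$ to the operator identity. Since $W_n$ is isometric, the right-hand side of $(\star)$ equals $\langle W_n(\widehat{\Gamma}_n(f|_{X_n})h_j), W_n(h_k)\rangle$; to conclude that the two vectors $T_f W_n(h_j)$ and $W_n(\widehat{\Gamma}_n(f|_{X_n})h_j)$ coincide, I test them against the dense family $\mathcal{H}^0_n$ rather than against $\{W_n(h_k)\}$ (the latter is only dense in the range, which would be circular). For $w = T_g W_n(h_k) \in \mathcal{H}^0_n$ with $\text{supp}(g)\subseteq X_n$, using $T_g^\ast = T_{\bar g}$, $T_{\bar g}T_f = T_{\bar g f}$, the fact that $\widehat{\Gamma}_n$ is a $\ast$-homomorphism, and $(\star)$ (applied to $\bar g f$, and in the second computation to the vector $\widehat{\Gamma}_n(f|_{X_n})h_j$), both $\langle T_f W_n(h_j), w\rangle$ and $\langle W_n(\widehat{\Gamma}_n(f|_{X_n})h_j), w\rangle$ reduce to $\langle \widehat{\Gamma}_n(f|_{X_n})h_j,\, \widehat{\Gamma}_n(g|_{X_n})h_k\rangle$. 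As $\mathcal{H}^0_n$ spans a dense subspace of $\mathcal{H}_n$ by Lemma \ref{lem;dls}, the intertwining identity follows, and with it the surjectivity of $W_n$. The main obstacle is the shell-by-shell verification of $(\star)$: one must carefully match the spectral data of $\mathcal{M}_n$ encoded in the densities $f_i^{\cdot,\cdot}$ with the Hilbert-space projections $P_i$, which is exactly what the compatibility of the Gelfand isomorphisms across levels provides.
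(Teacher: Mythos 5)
Your proposal is correct and matches the paper's proof in all essentials: the paper likewise reduces surjectivity to showing that the closed range of the isometry $W_n$ contains the dense set $\mathcal{H}^0_n$ of Lemma \ref{lem;dls}, and its central computation --- the expansion $\|T_fW_n(h_j)-W_n(\hat h)\|^2 = I_1-I_2-I_3+I_4$ with $\hat h\in\mathcal{Z}_m$ approximating $\widehat{\Gamma}_m\big(f|_{X_m}\big)h_j$ --- is exactly the shell-by-shell Radon--Nikodym argument underlying your identity $(\star)$. The only difference is packaging: the paper shows $T_fW_n(h_j)\in\overline{W_n(\mathcal{Z}_n)}$ by an $\varepsilon$-approximation rather than by first proving the exact intertwining relation $T_fW_n(h)=W_n\big(\widehat{\Gamma}_n(f|_{X_n})h\big)$ and then testing against $\mathcal{H}^0_n$, but the two formulations are equivalent by continuity of $W_n$.
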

\begin{proof}
Let $f \in \text{EB}_\text{loc} \big( X, \Sigma, \mu \big)$ such that $\text{supp}(f) \subseteq X_n$ and $h_j \in \mathcal{Z}_n$ be arbitrarily fixed. We choose the smallest $m \in \mathbb{N}$ such that $h_j \in \mathcal{K}_m$  \big (i.e. $h_j \notin \mathcal{K}_{m-1}$ \big). Observe that $\widehat{\Gamma}_{m} \big ({f \big|_{X_{m}}} \big )  h_j \in \mathcal{K}_m$. Since $\mathcal{Z}_m$ is dense in $\mathcal{K}_m$, for a fixed $\epsilon > 0$ there is $\widehat{h} \in \mathcal{Z}_m$ such that 
\begin{equation} \label{eq; epsilon}
\left \| \widehat{\Gamma}_{m} \big ({f \big|_{X_{m}}} \big )  h_j  - \widehat{h} \right \|_{\mathcal{K}_m}^2 < \epsilon,
\end{equation}
Let $l \in \mathbb{N}$ be the smallest such that $\widehat{h} \in \mathcal{K}_l$  \big (i.e. $\widehat{h} \notin \mathcal{K}_{l -1}$ \big). Using Equation \eqref{eq; V n}, we observe that $W_n \left(\widehat{h} \right ) \in \mathcal{H}_l$ and $T_fW_n(h_j) \in \mathcal{H}_m$. Here, we get $l \leq m$.  Now let us denote by
\begin{equation*}
I = \big \| T_fW_n(h_j) - W_n \big (\widehat{h} \big ) \big \|^2_{\mathcal{H}_m}.
\end{equation*}
Then we have
\begin{equation} \label{eq; integral}
I = I_1 -I_2 - I_3 + I_4,
\end{equation}
where 
\begin{flalign*}
I_1 =& \big \langle T_fW_n(h_j), T_fW_n(h_j) \big \rangle \\
=& \sum_{i=1}^{m} \int_{X_i \setminus X_{i-1}} |f(p)|^2 \la (P_ih_j - P_{i-1} h_j) + \mathcal{N}_p, (P_ih_j - P_{i-1} h_j) + \mathcal{N}_p \ra \, \mathrm{d} \mu_i(p) \\
= & \sum_{i=1}^{m} \left ( \int_{X_i \setminus X_{i-1}} |f(p)|^2 \phi_p(P_ih_j, P_ih_j) \, \mathrm{d} \mu_i(p) - \int_{X_i \setminus X_{i-1}} |f(p)|^2 \phi_p ( P_ih_j, P_{i-1} h_j) \, \mathrm{d} \mu_i(p) \right) \\
& + \sum_{i=1}^{m} \left ( - \int_{X_i \setminus X_{i-1}} |f(p)|^2 \phi_p ( P_{i-1} h_j, P_ih_j) \, \mathrm{d} \mu_i(p) + \int_{X_i \setminus X_{i-1}} |f(p)|^2 \phi_p ( P_{i-1} h_j, P_{i-1} h_j) \, \mathrm{d} \mu_i(p)\right) \\
= & \sum_{i=1}^{m} \left (  \int_{X_i \setminus X_{i-1}} |f(p)|^2 f_i^{P_i h_j, P_i h_j}(p) \, \mathrm{d} \mu_i(p) - \int_{X_i \setminus X_{i-1}} |f(p)|^2 f_i^{P_i h_j, P_{i - 1} h_j}(p) \, \mathrm{d} \mu_i(p) \right ) \\
& + \sum_{i=1}^{m} \left (  - \int_{X_i \setminus X_{i-1}} |f(p)|^2 f_i^{P_{i -1} h_j, P_i h_j}(p) \, \mathrm{d} \mu_i(p) + \int_{X_i \setminus X_{i-1}} |f(p)|^2 f_i^{P_{i - 1} h_j, P_{i - 1} h_j}(p) \, \mathrm{d} \mu_i(p) \right ) \\
= & \sum_{i=1}^{m} \big ( \la P_ih_j, \; \widehat{\Gamma}_i \big ({\up{\chi}_{X_i \setminus X_{i-1}}f^2} \big )P_ih_j \ra - \la P_ih_j, \; \widehat{\Gamma}_i \big ({\up{\chi}_{X_i \setminus X_{i-1}} f^2} \big ) P_{i-1}h_j \ra \big ) \\
& + \sum_{i=1}^{m} \big   ( - \la P_{i-1} h_j, \;  \widehat{\Gamma}_i \big ({\up{\chi}_{X_i \setminus X_{i-1}} f^2} \big )P_ih_j \ra + \la P_{i-1} h_j, \;  \widehat{\Gamma}_i \big ({\up{\chi}_{X_i \setminus X_{i-1}} f^2} \big ) P_{i-1} h_j \ra \big ) \\
= & \sum_{i=1}^{m} \left ( \la P_i h_j, \; \widehat{\Gamma}_i({\up{\chi}_{X_i \setminus X_{i-1}}f^2}) P_i h_j \ra - 0 - 0  + 0 \right ).
\end{flalign*}
Next,
\begin{flalign*}
I_2 =& \big \langle T_fW_n(h_j), W_n \big (\hat{h} \big ) \big \rangle \\
=& \sum_{i=1}^{l} \int_{X_i \setminus X_{i-1}} \overline{f(p)} \la (P_ih_j - P_{i-1} h_j) + \mathcal{N}_p, (P_i \hat{h} - P_{i-1} \hat{h}) + \mathcal{N}_p \ra \, \mathrm{d} \mu_i(p) \\
=& \sum_{i=1}^{l} \left ( \int_{X_i \setminus X_{i-1}} \overline{f(p)} \phi_p \big (P_ih_j, P_i \hat{h} \big ) \, \mathrm{d} \mu_i(p) - \int_{X_i \setminus X_{i-1}} \overline{f(p)} \phi_p \big (P_ih_j, P_{i-1}\hat{h} \big ) \, \mathrm{d} \mu_i(p) \right) \\
& +\sum_{i=1}^{l} \left ( - \int_{X_i \setminus X_{i-1}} \overline{f(p)} \phi_p \big (P_{i-1}h_j, P_i \hat{h} \big ) \, \mathrm{d} \mu_i(p) + \int_{X_i \setminus X_{i-1}} \overline{f(p)} \phi_p \big (P_{i-1}h_j, P_{i-1} \hat{h} \big ) \, \mathrm{d} \mu_i(p)\right) \\
= & \sum_{i=1}^{l} \left (  \int_{X_i \setminus X_{i-1}} \overline{f(p)} f_i^{P_i h_j, P_i \hat{h}}(p) \, \mathrm{d} \mu_i(p) - \int_{X_i \setminus X_{i-1}} \overline{f(p)} f_i^{P_i h_j, P_{i - 1} \hat{h}}(p) \, \mathrm{d} \mu_i(p) \right ) \\
& + \sum_{i=1}^{l} \left (  - \int_{X_i \setminus X_{i-1}} \overline{f(p)} f_i^{P_{i -1} h_j, P_i \hat{h}}(p) \, \mathrm{d} \mu_i(p) + \int_{X_i \setminus X_{i-1}} \overline{f(p)} f_i^{P_{i - 1} h_j, P_{i - 1} \hat{h}}(p) \, \mathrm{d} \mu_i(p) \right ) \\
= & \sum_{i=1}^{l} \left ( \la P_ih_j, \; \widehat{\Gamma}_i \big ({\up{\chi}_{X_i \setminus X_{i-1}} \overline{f}} \big )P_i \hat{h} \ra - \la P_ih_j, \; \widehat{\Gamma}_i \big ({\up{\chi}_{X_i \setminus X_{i-1}} \overline{f}} \big ) P_{i-1} \hat{h} \ra \right ) \\
& + \sum_{i=1}^{l} \left   ( - \la P_{i-1} h_j, \;  \widehat{\Gamma}_i \big ({\up{\chi}_{X_i \setminus X_{i-1}} \overline{f}} \big )P_i \hat{h} \ra + \la P_{i-1} h_j, \;  \widehat{\Gamma}_i \big ({\up{\chi}_{X_i \setminus X_{i-1}} \overline{f}} \big ) P_{i-1} \hat{h} \ra \right ) \\
= & \sum_{i=1}^{l} \left ( \la P_i h_j, \widehat{\Gamma}_i({\up{\chi}_{X_i \setminus X_{i-1}} \overline{f}}) P_i \hat{h} \ra - 0 - 0 + 0 \right).
\end{flalign*}
Next,
\begin{flalign*}
I_3 =& \big \langle W_n \big (\hat{h} \big ), T_fW_n(h_j) \big \rangle  \\
=& \sum_{i=1}^{l} \int_{X_i \setminus X_{i-1}} f(p) \la (P_i \hat{h} - P_{i-1} \hat{h}) + \mathcal{N}_p, (P_ih_j - P_{i-1} h_j) + \mathcal{N}_p \ra \, \mathrm{d} \mu_i(p) \\
=& \sum_{i=1}^{l} \left ( \int_{X_i \setminus X_{i-1}} f(p) \phi_p \big (P_i \hat{h}, P_ih_j \big ) \, \mathrm{d} \mu_i(p) - \int_{X_i \setminus X_{i-1}} f(p) \phi_p \big (P_i \hat{h}, P_{i-1}h_j \big ) \, \mathrm{d} \mu_i(p) \right) \\
& + \sum_{i=1}^{l}  \left ( - \int_{X_i \setminus X_{i-1}} f(p) \phi_p \big (P_{i-1} \hat{h}, P_i h_j \big ) \, \mathrm{d} \mu_i(p) + \int_{X_i \setminus X_{i-1}} f(p) \phi_p \big (P_{i-1} \hat{h}, P_{i-1}h_j \big ) \, \mathrm{d} \mu_i(p) \right) \\
= & \sum_{i=1}^{l} \left (  \int_{X_i \setminus X_{i-1}} f(p) f_i^{P_i \hat{h}, P_i h_j}(p) \, \mathrm{d} \mu_i(p) - \int_{X_i \setminus X_{i-1}} f(p) f_i^{P_{i} \hat{h}, P_{i - 1} h_j}(p) \, \mathrm{d} \mu_i(p) \right ) \\
& + \sum_{i=1}^{l} \left (  - \int_{X_i \setminus X_{i-1}} f(p) f_i^{P_{i -1} \hat{h}, P_{i} h_j}(p) \, \mathrm{d} \mu_i(p) + \int_{X_i \setminus X_{i-1}} f(p) f_i^{P_{i - 1} \hat{h}, P_{i - 1} h_j}(p) \, \mathrm{d} \mu_i(p) \right ) \\
= & \sum_{i=1}^{l} \left ( \la P_i \hat{h}, \; \widehat{\Gamma}_i \big ({\up{\chi}_{X_i \setminus X_{i-1}} f} \big )P_i h_j \ra - \la P_i \hat{h}, \; \widehat{\Gamma}_i \big ({\up{\chi}_{X_i \setminus X_{i-1}} f} \big ) P_{i-1}h_j \ra \right ) \\
& + \sum_{i=1}^{l} \left   ( - \la P_{i-1} \hat{h}, \;  \widehat{\Gamma}_i \big ({\up{\chi}_{X_i \setminus X_{i-1}} f} \big )P_i h_j \ra + \la P_{i-1} \hat{h}, \;  \widehat{\Gamma}_i \big ({\up{\chi}_{X_i \setminus X_{i-1}} f} \big ) P_{i-1} h_j  \ra \right ) \\
= & \sum_{i=1}^{l} \left ( \la P_i \hat{h}, \; \widehat{\Gamma}_i({\up{\chi}_{X_i \setminus X_{i-1}} f}) P_i h_j \ra - 0 - 0 + 0 \right).
\end{flalign*}
Finally,
\begin{flalign*}
I_4 =& \big \langle W_n \big (\hat{h} \big ), W_n \big (\hat{h} \big ) \big \rangle \\
=& \sum_{i=1}^{l} \int_{X_i \setminus X_{i-1}} \la \big (P_i \hat{h} - P_{i-1} \hat{h} \big ) + \mathcal{N}_p, \big (P_i \hat{h} - P_{i-1} \hat{h} \big) + \mathcal{N}_p \ra \, \mathrm{d} \mu_i(p) \\
=& \sum_{i=1}^{l} \left ( \int_{X_i \setminus X_{i-1}} \phi_p \big (P_i \hat{h}, P_i \hat{h} \big) \, \mathrm{d} \mu_i(p) - \int_{X_i \setminus X_{i-1}} \phi_p \big (P_i \hat{h}, P_{i-1} \hat{h} \big) \, \mathrm{d} \mu_i(p) \right) \\
& - \sum_{i=1}^{l} \left ( \int_{X_i \setminus X_{i-1}} \phi_p \big(P_{i-1} \hat{h}, P_i \hat{h} \big) \, \mathrm{d} \mu_i(p) + \int_{X_i \setminus X_{i-1}} \phi_p \big (P_{i-1} \hat{h}, P_{i-1} \hat{h} \big) \, \mathrm{d} \mu_i(p)  \right) \\
= & \sum_{i=1}^{l} \left (  \int_{X_i \setminus X_{i-1}}  f_i^{P_i \hat{h}, P_i \hat{h}}(p) \, \mathrm{d} \mu_i(p) - \int_{X_i \setminus X_{i-1}}  f_i^{P_i \hat{h}, P_{i - 1} \hat{h}}(p) \, \mathrm{d} \mu_i(p) \right ) \\
& + \sum_{i=1}^{l} \left (  - \int_{X_i \setminus X_{i-1}}  f_i^{P_{i -1} \hat{h}, P_i \hat{h}}(p) \, \mathrm{d} \mu_i(p) + \int_{X_i \setminus X_{i-1}} f_i^{P_{i - 1} \hat{h}, P_{i - 1} \hat{h}}(p) \, \mathrm{d} \mu_i(p) \right ) \\
= & \sum_{i=1}^{l} \left ( \la P_i\hat{h}, \; \widehat{\Gamma}_i \big (\up{\chi}_{X_i \setminus X_{i-1}}  \big )P_i \hat{h} \ra - \la P_i\hat{h}, \; \widehat{\Gamma}_i \big (\up{\chi}_{X_i \setminus X_{i-1}}  \big ) P_{i-1} \hat{h} \ra \right ) \\
& + \sum_{i=1}^{l} \left   ( - \la P_{i-1} \hat{h}, \;  \widehat{\Gamma}_i \big (\up{\chi}_{X_i \setminus X_{i-1}}  \big )P_i \hat{h} \ra + \la P_{i-1} \hat{h}, \;  \widehat{\Gamma}_i \big (\up{\chi}_{X_i \setminus X_{i-1}} \big ) P_{i-1} \hat{h} \ra \right ) \\
= & \sum_{i=1}^{l} \left (  \la  P_i \hat{h}, \;  \widehat{\Gamma}_i({\up{\chi}_{X_i \setminus X_{i-1}}}) P_i \hat{h} \ra - 0 - 0 + 0 \right). 
\end{flalign*}
Note that in the last equality while computing $I_t$ for $t = 1, 2, 3, 4$ we obtain zeros using Lemma \ref{lem; condition 2}.
Thus, by using Equation \eqref{eq; integral}, Lemma \ref{lem; condition 2} and the above computations, we get
\begin{align*}
I =&  \sum_{i=1}^{m} \la P_i h_j, \; \widehat{\Gamma}_i({\up{\chi}_{X_i \setminus X_{i-1}}f^2}) P_i h_j \ra - \sum_{i=1}^{l}  \la P_i h_j, \widehat{\Gamma}_i({\up{\chi}_{X_i \setminus X_{i-1}} \overline{f}}) P_i \hat{h} \ra   \\ 
& - \sum_{i=1}^{l} \la P_i \hat{h}, \; \widehat{\Gamma}_i({\up{\chi}_{X_i \setminus X_{i-1}} f}) P_i h_j \ra + \sum_{i=1}^{l}  \la  P_i \hat{h}, \;  \widehat{\Gamma}_i({\up{\chi}_{X_i \setminus X_{i-1}}}) P_i \hat{h} \ra  \\
=&  \sum_{i=1}^{m} \la P_i h_j, \; \widehat{\Gamma}_i({\up{\chi}_{X_i \setminus X_{i-1}}f^2}) P_i h_j \ra - \sum_{i=1}^{l}  \la P_i h_j, \widehat{\Gamma}_i({\up{\chi}_{X_i \setminus X_{i-1}} \overline{f}}) P_i \hat{h} \ra   \\ 
& - \sum_{i=1}^{l} \la P_i \hat{h}, \; \widehat{\Gamma}_i({\up{\chi}_{X_i \setminus X_{i-1}} f}) P_i h_j \ra + \sum_{i=1}^{l}  \la  P_i \hat{h}, \;  \widehat{\Gamma}_i({\up{\chi}_{X_i \setminus X_{i-1}}}) P_i \hat{h} \ra  \\
& - \sum_{i =l + 1}^{m} \la P_ih_j, \;  \widehat{\Gamma}_i({\up{\chi}_{X_i \setminus X_{i-1}} \overline{f}}) P_i\hat{h} \ra  - \sum_{i= l + 1}^{m} \la P_i \hat{h}, \; \widehat{\Gamma}_i({\up{\chi}_{X_i \setminus X_{i-1}} f}) P_i h_j \ra \\
= & \la h_j, \; \widehat{\Gamma}_{m}({f^2 \big |_{X_{m}}}) h_j \ra - \la h_j, \; \widehat{\Gamma}_{m}({ \overline{f} \big |_{X_{m}}}) \hat{h} \ra - \la \hat{h}, \; \widehat{\Gamma}_{m}({f \big |_{X_{m}}})  h_j \ra +  \la \hat{h}, \hat{h} \ra \\
= & \left \| \widehat{\Gamma}_{m}({f \big |_{X_{m}}}) h_j - \hat{h} \right \|^2.
\end{align*}

\noindent
Then Equation \eqref{eq; epsilon} implies that
\begin{align*}
I = \big \| T_fW_n(h_j) - W_n \big (\hat{h} \big ) \big \|^2_{\mathcal{H}_m}  < \epsilon.
\end{align*}
It follows that $W_n (\mathcal{Z}_n)$ is dense in $\mathcal{H}^0_n$ and so in span of $\mathcal{H}^0_n$. We know from Lemma \ref{lem;dls} that the span of $\mathcal{H}^0_n$ is dense in $\mathcal{H}_n$. Hence $W_n (\mathcal{Z}_n)$ is dense in $\mathcal{H}_n$. Finally by using the fact that $\mathcal{Z}_n$ is dense in $\mathcal{K}_n$, we conclude that the isometry  
$W_n$ is surjective for each $n \in \mathbb{N}$.
\end{proof}

Now we are ready to prove the main theorem. \\

\noindent \textbf{Proof of Theorem \ref{thm;dlhs}:} As discussed in Note \ref{note; locally Hilbert space}, $\displaystyle \dilX \mathcal{D}_p \, \dmu$ is a locally Hilbert space with the strictly inductive system $\big \{ \mathcal{H}_n \big \}_{n \in \mathbb{N}}$, where $\mathcal{H}_n$ is defined in Equation \eqref{eq; Hilbert space H n}. An appeal to Lemma \ref{lem;i} and Lemma \ref{lem;s}, for every $n \in \mathbb{N}$, the map $W_n : \mathcal{K}_n \rightarrow \mathcal{H}_n$ is unitary. Recall that if $h_j \in \mathcal{Z}_n$, then 
\begin{equation*}
W_n(h_j)(p) = (P_i(\mathrm{Id}_\mathcal{K} - P_{i-1}) h_j) + \mathcal{N}_p \; \; \text{if} \; \; p \in X_i \setminus X_{i-1}.
\end{equation*}
For the same $h_j \in \mathcal{Z}_n$, we see that 
\begin{equation*}
W_{n + 1}(h_j)(p) = (P_i(\mathrm{Id}_\mathcal{K} - P_{i-1}) h_j) + \mathcal{N}_p \; \; \text{if} \; \; p \in X_i \setminus X_{i-1}.
\end{equation*}
This shows that $W_{n + 1} \big |_{\mathcal{K}_n} = W_n$. Similarly, we see that $W_{r} \big |_{\mathcal{K}_n} = W_n$ for all $r \leq n$. In other words, $\big \{ W_n \big \}_{n \in \mathbb{N}}$ is a projective system of unitary operators. Now we define a locally bounded operator (see Equation \eqref{eq; inverese limit of bounded operators}) $W : \mathcal{D} \rightarrow \displaystyle \dilX \mathcal{D}_p \, \dmu$ by 
\begin{equation*}
W := \varprojlim_{n \in \mathbb{N}} W_n 
\end{equation*}
Moreover, $W$ is a bijective and local isometry. Thus, $\mathcal{D}$ is isomorphic to $\displaystyle \dilX \mathcal{D}_p \, \dmu$. On the other hand, for each $n \in \mathbb{N}$, the map $\tau_n : \text{EB}_\text{loc} \big( X, \Sigma, \mu \big) \rightarrow \mathcal{M}_n$ defined by 
\begin{equation*}
\tau_n(f) =  \widehat{\Gamma}_n \big ( f\big |_{X_n} \big )
\end{equation*}
is a normal homomorphism. Further, by using Equation \eqref{eq; condition 2}, for $m \leq n$, we have 
\begin{align*}
\psi_{m, n} \circ \tau_n(f) &= \psi_{m, n} \circ \widehat{\Gamma}_n \big ( f\big |_{X_n} \big ) \\
&= \widehat{\Gamma}_n \big ( f\big |_{X_n} \big ) \big |_{\mathcal{H}_m} \\
&= \widehat{\Gamma}_m \big ( f\big |_{X_m} \big ) \\
&= \tau_m(f)
\end{align*}
for every $f \in \text{EB}_\text{loc} \big( X, \Sigma, \mu \big)$. This shows that $\big ( \text{EB}_\text{loc} \big( X, \Sigma, \mu \big), \{ \tau_n \}_{n \in \mathbb{N}} \big )$ is compatibe with the projective system $\left ( \{ \mathcal{M}_n \}_{n \in \mathbb{N}},  \{\psi_{m,n} \}_{m \leq n} \right )$ of von Neumann algebras. So, by the uniquness of projective limit there exists a unique normal map $\tau : \text{EB}_\text{loc} \big( X, \Sigma, \mu \big) \rightarrow \mathcal{M}$ such that
\begin{equation*}
\psi_n \circ \tau = \tau_n
\end{equation*}
for every $n \in \mathbb{N}$. In fact, $\tau$ is can be defined as 
\begin{equation*}
\tau(f) = \varprojlim_{n \in \mathbb{N}} \widehat{\Gamma}_n \big (f \big |_{X_n} \big ).
\end{equation*}
Since $\widehat{\Gamma}_n$ is isomorphic for each $n \in \mathbb{N}$, we conclude that $\tau$ is bijective. Finally by using the fact that the abelian locally von Neumann algebra of all locally bounded diagonalizable operators on $\displaystyle \dilX \mathcal{D}_p \, \dmu$ is in one-to-one correspondence with $\text{EB}_\text{loc} \big( X, \Sigma, \mu \big)$ (see Equation \eqref{eq; EB loc}) via bijective map $\tau$ the result follows.

\subsection*{Declaration} 
The authors declare that there are no conflicts of interest.

\section*{Acknowledgement} 
The first named author kindly acknowledges the financial support received as an Institute postdoctoral fellowship from the Indian Institute of Science Education and Research Mohali. The first named author also sincerely thanks his Ph.D. thesis supervisor for introducing him to the topic of direct integral and disintegration of Hilbert spaces. The second named author would like to thank SERB (India) for a financial support in the form of Startup Research Grant (File No. SRG/2022/001795). The authors express their sincere thanks to DST for a financial support in the form of the FIST grant (File No. SR/FST/MS-I/2019/46(C)) and the Department of Mathematical Sciences, IISER Mohali for providing necessary facilities to carryout this work. 

\bibliographystyle{plain}

\end{document}